\documentclass[12pt,leqno]{amsart}
\usepackage[margin=3.5cm]{geometry}
\usepackage[utf8]{inputenc}
\usepackage{graphicx, array, url, tcolorbox, xcolor, amsmath, amsfonts, amsthm, amssymb, amstext}
\usepackage{float}
\usepackage{caption}
\usepackage{caption}
\usepackage{todonotes}
\usepackage{tikz}
\usetikzlibrary{decorations.pathreplacing}
\usetikzlibrary{arrows.meta}

\DeclareMathOperator{\id}{id}

\newcommand{\R}{\mathbb R}

\newcommand{\ndim}{\dim_N}

\newcommand{\asdim}{\text{asdim }}

\newcommand{\diam}{\text{diam}}

\newcommand{\lbase}{\lambda}
\newcommand{\psl}{\partial_{\mathrm{sl}}}

\newcommand{\calN}{\mathcal{N}}

\newcommand{\calU}{\mathcal{U}}

\newcommand{\FF}{\mathbb{F}}

\newcommand{\bfa}{\textbf{a}}
\newcommand{\bfb}{\textbf{b}}
\newcommand{\bfc}{\textbf{c}}

\newcommand{\bfx}{\textbf{x}} 
\newcommand{\bfy}{\textbf{y}}  

\newcommand{\gothic}{\mathfrak}

\newcommand{\go}{{\gothic o}}

\newcommand{\sD}{{\sf D}}

\newcommand{\sK}{{\sf K}}   
     
\newcommand{\sM}{{\sf M}}

\newcommand{\sQ}{{\sf Q}}

\newcommand{\kk}{{\sf k}}   

\newcommand{\nn}{{\sf n}}

\newcommand{\qq}{{\sf q}}   
\newcommand{\rr}{{\sf r}}

\renewcommand{\setminus}{{\smallsetminus}}

\newcommand{\ST}{\mathbin{\Big|}} 
\newcommand{\from}{\colon\thinspace} 

\newcommand{\bdy}{\partial} 
\newcommand{\vis}{\bdy_\infty}
\newcommand{\pka}{\bdy_\kappa}
\newcommand{\dvis}{d_{\mathrm{vis}}}
\newcommand{\dlog}{d_{\mathrm{log}}}

\definecolor{peach}{HTML}{F7965A}
\definecolor{springgreen}{HTML}{C6DC67}
\definecolor{royalblue}{HTML}{0071BC}
\definecolor{periwinkle}{HTML}{7977B8}
\definecolor{pinegreen}{HTML}{008B72}
\definecolor{olivegreen}{HTML}{3C8031}

\RequirePackage{hyperref}
\hypersetup{pdfpagemode={UseOutlines},
bookmarksopen=true,
bookmarksopenlevel=0,
hypertexnames=false,
colorlinks=true, 
citecolor=teal, 
linkcolor=blue, 
urlcolor=magenta, 
pdfstartview={FitV},
unicode,
breaklinks=true,
}

\newcounter{sec_counter} \numberwithin{sec_counter}{section}

\newtheorem{theorem}[sec_counter]{Theorem}
\newtheorem{lemma}[sec_counter]{Lemma}
\newtheorem{convention}[sec_counter]{Convention}
\newtheorem{corollary}[sec_counter]{Corollary}
\newtheorem{proposition}[sec_counter]{Proposition}
\newtheorem{introthm}{Theorem}
\newtheorem{introcor}[introthm]{Corollary}

\newtheorem*{introques}{Question}

\theoremstyle{definition}
\newtheorem{definition}[sec_counter]{Definition}
\theoremstyle{remark}
\newtheorem{remark}[sec_counter]{Remark}
\newtheorem{example}[sec_counter]{Example}

\title[Conformal gauge on the Visual Boundary of a hyperbolic group]{A new conformal gauge on the visual boundary of a hyperbolic group}
\author{Manisha Garg}
\address{University of Illinois at Urbana Champaign, Illinois}
\email{manisha8@illinois.edu}
\date{\today}

\subjclass[2020]{Primary 
20F65, 20F67, 20F69, 30L05; Secondary 20F38  , 57M07 .}
\keywords{Hyperbolic groups, visual boundaries, sublinear Morse boundaries, logarithmic metrics, polynomial Floyd metrics, conformal gauges, quasi-isometries, Hausdorff dimension, Assouad dimension, Nagata dimension}

\begin{document}

\begin{abstract}
We study a logarithmic re-gauging of the visual boundary,
arising from the logarithmic metric on the sublinear Morse boundary introduced in \cite{gjq}. For every proper geodesic hyperbolic space $X$, we prove
that
$d_{\mathrm{log}}(\xi,\eta)\asymp
(1+(\xi\mid\eta)_o)^{-\theta}$.
Consequently, under the natural identification of boundaries,
$d_{\mathrm{log}}$ is bi-Lipschitz equivalent to the polynomial Floyd
boundary metric associated with the radial density
$g_\theta(t)=(1+t)^{-1-\theta}$.
Every quasi-isometry between proper geodesic hyperbolic spaces induces
a bi-Lipschitz map between their logarithmic boundaries with a fixed admissible exponent.

For every non-elementary hyperbolic group, the logarithmic boundary is
nondoubling and has infinite Hausdorff and Assouad dimensions.
Nevertheless, the logarithmic re-gauging preserves Nagata
dimension and continues to recover the asymptotic dimension of the
group. It also detects one-endedness through linear connectedness (bounded turning) and preserves uniform perfectness. Thus the logarithmic and visual metrics are not quasisymmetrically equivalent, although several coarse-geometric features remain visible in the logarithmic boundary.
\end{abstract}

\maketitle

\begingroup
  \setlength{\parskip}{0pt}  
  \setcounter{tocdepth}{1}
  \tableofcontents
\endgroup

\section{Introduction}

Boundaries at infinity encode the large-scale geometry of a metric space in the geometry of its escaping directions. For a Gromov hyperbolic space
$X$, the boundary $\partial_\infty X$ records equivalence classes of quasi-geodesic rays and provides a much more manageable compact topological model for the geometry of $X$ at large scale \cite{gromov:essay}. This boundary, often referred to as the visual boundary or Gromov boundary, has a rich topological, metric, and measure theoretic structure.

In the case of a hyperbolic group, the topology of the
boundary already detects substantial algebraic and coarse-geometric information. For example, connectedness of the boundary is closely related to the number of ends of the group and decomposability of the group, while its topological dimension detects the asymptotic dimension of the group \cite{BestMess:local_connected91, HruRuane:connectivitysurvey23, swarup96:cutpointconj,  bowditch99:cutpointgeneralization, gromov:essay, BuyaloLebedevaPontryagin2007}. Furthermore, Bowditch's deep and beautiful result characterizes the splitting of a hyperbolic group over a two-ended subgroup in terms of the local cut points of the boundary \cite{bowditch98:splitting}. 
More generally, Bowditch constructed the canonical JSJ decomposition of $G$ over two-ended subgroups directly from the local cut-point structure of the boundary. This boundary-theoretic construction reflects the canonicality and quasi-isometry invariance of the decomposition. The JSJ decomposition
organizes the essential splittings of $G$ over two-ended subgroups and plays a central role in understanding $\operatorname{Out}(G)$, the outer automorphism group of $G$
\cite{Sela97:JSJ,MilNeuSwar99:JSJtorsion,bowditch98:splitting}.

However, the topology of a boundary does not measure how rapidly two escaping directions separate.  A metric on the boundary supplies this missing quantitative structure. The visual boundary of a hyperbolic space carries a natural \textit{conformal gauge} of visual metrics, that is, a quasisymmetric equivalence class of metrics. These metrics connect coarse geometry of a hyperbolic space with conformal and metric geometry on the boundary, and they make it possible to recover large-scale group invariants from quantitative boundary properties.  More concretely, if $X$ is a proper geodesic hyperbolic space with basepoint $\go$, then a visual metric satisfies
\[
d_{\mathrm{vis}}(\bfa,\bfb)
\asymp
e^{-\varepsilon(\bfa\mid\bfb)_{\go}},
\]
where $(\bfa\mid\bfb)_{\go}$ is the Gromov product and $\varepsilon>0$ is a sufficiently small visual parameter. Thus two boundary points are close when
geodesic rays representing them travel together for a long time. For definitions, see Section~\ref{S:background}.

In the cocompact setting, Paulin showed that the quasi-isometry type of a hyperbolic space is determined by the quasi-M\"obius structure of its boundary \cite{Paulin96:qmqi}; see also \cite{Bourdon95:cat-1qc, GhysHarpe:hyp}. Bonk and Schramm subsequently developed a broader correspondence between coarse maps of visual hyperbolic spaces and geometrically controlled maps such as power quasisymmetries of their boundaries \cite{BonkSchramm:hyperbolic}. We refer the reader to the surveys \cite{survey:kapovichbenakli, survey:Kleinericm, survey:stark25} for more details on conformal and quasiconformal structures on boundaries.

The geometry of the boundary can be used both to obstruct and to construct quasi-isometric embeddings. On the obstruction side, Pansu introduced conformal dimension and used it to distinguish quasi-isometry types among negatively curved homogeneous spaces and their cocompact lattices \cite{Pansu89:lattices}. On the constructive side, Bonk and Kleiner used quantitative connectedness
of visual boundaries, called linear connectedness, to prove that $\mathbb H^2$  admits a quasi-isometric embedding into $G$ if and only if $G$ is not virtually free. Equivalently, the visual boundary of $G$ contains a quasicircle if and only if $G$ is not virtually free \cite[Theorem~1 and Corollary~2]{BonkKleiner:llc_one_ended}; see also \cite{HruRuane:connectivitysurvey23} for a different proof. Under the stronger hypothesis that $G$ does not virtually split over a finite or two-ended subgroup, Mackay strengthened the result by showing that  every finite collection of bi-infinite geodesics in $G$ lies in the image of a quasi-isometric embedding $\mathbb H^2\to G$
\cite[Corollary~1.6]{Mackay14:quasicircles}. Thus linear connectivity of the boundary can be converted into increasingly precise control over hyperbolic planes in the interior.

To study a metric space, it is a common method to embed it into a well-understood model space, such as a Euclidean space, a hyperbolic space, or a product of metric trees. Quantitative notions of dimension, including asymptotic dimension, Assouad dimension, Nagata (capacity) dimension, and conformal dimension, provide important tools for understanding when such embeddings exist and what geometric information they preserve.

For visual boundaries of hyperbolic groups, the Assouad dimension leads naturally to Euclidean embeddings. By Coornaert's Ahlfors-regularity theorem \cite{measure:Coornaert93}, the boundary of every non-elementary hyperbolic group, equipped with a visual metric, is doubling, or equivalently, has finite Assouad dimension. Assouad's embedding theorem therefore implies that, for every $0<\alpha<1$, the snowflaked boundary
\[
    \bigl(\partial_\infty G,d_{\mathrm{vis}}^\alpha\bigr)
\]
admits a bi-Lipschitz embedding into some finite-dimensional Euclidean space \cite[Theorem~12.2]{Heinonen2001LecturesonAnalysis}. Thus, after
snowflaking, every visual boundary admits a finite-dimensional Euclidean model.
This finite-dimensional boundary geometry is a key ingredient in the work of Bonk and Schramm, who
show that a Gromov hyperbolic geodesic space with bounded growth at some scale is roughly similar to a convex subset of a finite-dimensional real hyperbolic
space \cite{BonkSchramm:hyperbolic}. 

Similarly, the Nagata dimension (also known as capacity dimension for compact spaces) provides a tree-theoretic embedding theory. More precisely, Lang and Schlichenmaier prove that for a metric space $Z$. if $\dim_N Z\le  n$, then for every sufficiently small exponent $p\in(0,1)$, the snowflake $(Z,d^p)$ admits a bi-Lipschitz embedding into a product of $n+1$ metric trees \cite[Theorem~1.3]{LangSchlichenmaier2005}. This boundary embedding theorem has a large-scale counterpart for the group itself. Buyalo and Lebedeva proved that the asymptotic dimension of a hyperbolic group $G$, $\asdim G$, can be captured by the dimension of its boundary, that is, 
\[
\operatorname{asdim}G
=
\dim(\partial_\infty G)+1 = \ndim(\vis G)+1,
\]
and that $G$ admits a quasi-isometric embedding into a product of
$\dim(\partial_\infty G)+1$ metric trees
\cite{BuyaloLebedevaPontryagin2007}. Buyalo, Dranishnikov, and Schroeder
subsequently showed that the factors may be chosen to be binary trees
\cite{BuyaloDranishnikovSchroeder07}. Thus the boundary
dimension governs embeddings of both the boundary and the ambient group itself into finite products of model spaces.

These examples illustrate how quantitative properties of the visual boundary encode algebraic, topological, geometric, and analytic information about the hyperbolic group itself. At the same time, all of these aforementioned notions depend on the visual metric of the boundary. These observations naturally lead to the following question:

\begin{introques}
    Which of these properties are intrinsic to the large-scale geometry of the interior of a hyperbolic space, and which depend on the particular exponential scaling of a visual metric of its visual boundary?
\end{introques}

In this paper, we address this question by studying a different metric, $d_{\mathrm{log}}$, introduced in \cite{gjq} on the sublinear Morse boundary $\pka X$, where $\kappa:[0, \infty) \to [1, \infty)$ is a sublinear function - it is increasing, concave and
\[
\lim_{r\to\infty}\frac{\kappa(r)}{r}=0.
\]
It is a well-known fact that the visual boundary and the sublinear Morse boundary of proper geodesic hyperbolic space $X$ are homeomorphic:
$$\pka X \cong \vis X.$$


Another flexible source of metrics at infinity is provided by Floyd's radial distance rescaling construction. Buckley and Kokkendorff show that by using a certain nice class of functions, such as the polynomial
radial density $g_\theta(t)=(1+t)^{-1-\theta}$, the Floyd boundary and the visual boundary are homeomorphic \cite{4BK09:floydideal}. Moreover, one retrieves the class of visual metrics by utilizing exponential edge weights in the construction of Floyd metrics. Introduced by Floyd in the study of Kleinian limit sets \cite{Floyd80}, Floyd boundaries have subsequently played an important role in relative hyperbolicity. In particular, Gerasimov constructed a continuous equivariant map from a suitable Floyd boundary onto the Bowditch boundary, Gerasimov and Potyagailo analyzed the fibers of this map and used Floyd compactifications in quasi-isometry problems, and Potyagailo and Yang employed Floyd and shortcut metrics to relate Hausdorff dimension at infinity to the growth of the group \cite{6Ger12:floydmaps_rhg,7GP13:QIfloyed_rhg,GP16:similar_rhg,8PY19:hdim_rhg}.

We prove that $d_{\mathrm{log}}$ is bi-Lipschitz to the polynomial Floyd metric. This description also yields a strong functoriality property: every quasi-isometry between hyperbolic spaces induces a bi-Lipschitz map between their logarithmic boundaries. By utilizing the modulus of continuity of the map $$f: (\vis X, d_\mathrm{vis}) \longrightarrow (\pka X, \dlog),$$ we show that the visual boundary with this new log metric is non-doubling with infinite Hausdorff dimension. More strongly, every nondegenerate connected subset has infinite Hausdorff dimension. As a consequence, the logarithmic boundary contains no nonconstant rectifiable curves. In particular, the logarithmic and visual metrics belong to different conformal gauges.
Thus, the small scale geometry of the boundary distorts substantially. 

By contrast, several coarse-geometric features of a group remain detectable from its logarithmic boundary. The logarithmic re-gauging preserves Nagata dimension, and therefore it continues to recover the asymptotic dimension of a hyperbolic group. It also preserves linear connectedness (also known as bounded turning) whenever the visual boundary is linearly connected; for hyperbolic groups, the logarithmic boundary is linearly connected precisely when the group is one-ended. Finally, uniform perfectness of the visual boundary passes to the logarithmic boundary.

More concretely, for a proper geodesic metric space $X$, the metric constructed in \cite{gjq} is defined using the symmetric $\kappa$-fellow-traveling radius $r_\go(\bfa,\bfb)$ where $\bfa, \bfb \in \pka X$. Our first main theorem compares this quantity with the boundary Gromov product.

\begin{introthm}\label{thm:A}
Let $X$ be a proper geodesic hyperbolic space with basepoint $\go$. There
exists a constant $C\ge  1$ such that, for all distinct
$\bfa,\bfb\in\partial_\infty X\cong\partial_\kappa X$,
\[
    (\bfa\mid\bfb)_\go
    \le 
    r_\go(\bfa,\bfb)
    \le 
    (\bfa\mid\bfb)_\go
    +
    C\kappa\bigl((\bfa\mid\bfb)_\go\bigr)
    +
    C.
\]
Consequently, $1+r_\go(\bfa,\bfb)\asymp 1+(\bfa\mid\bfb)_\go$
and every logarithmic metric with admissible exponent $\theta>0$ satisfies
\[
    d_{\mathrm{log}}(\bfa,\bfb)
    \asymp
    \frac{1}
    {\bigl(1+(\bfa\mid\bfb)_\go\bigr)^\theta}.
\]
\end{introthm}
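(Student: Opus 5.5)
The proof will proceed by comparing the fellow-traveling radius with the Gromov product along two fixed geodesic rays. Fix geodesic rays $\alpha,\beta$ from $\go$ representing $\bfa,\bfb$; these exist since $X$ is proper. Write $g=(\bfa\mid\bfb)_\go$. By definition $r_\go(\bfa,\bfb)$ is the supremal radius $r$ at which the two rays remain in each other's $\kappa$-neighbourhoods. Concretely, the condition is $d(\alpha(t),\beta)\le n\kappa(t)$ and symmetrically for $t\le r$, with the multiplicative constant fixed in the definition of \cite{gjq}. The key input is the tree approximation (or thin-triangles) estimate for the ideal triangle $\go,\bfa,\bfb$:
\[
\bigl|\,d(\alpha(t),\beta)-\max\{0,\,t-g\}\,\bigr|\le C_0\delta \qquad\text{for all } t\ge 0,
\]
and likewise with $\alpha,\beta$ swapped. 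Here $C_0$ is a universal constant, and we may also pass between different geodesic representatives at bounded cost.

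\emph{Lower bound.} For $t\le g$ the estimate gives $d(\alpha(t),\beta)\le C_0\delta\le C_0\delta\,\kappa(t)$, since $\kappa\ge 1$. For $t$ slightly beyond $g$ the distance is at most $(t-g)+C_0\delta$. Hence once the multiplicative constant in the $\kappa$-neighbourhood dominates $C_0\delta$, every $t\le g$ is admissible and $r_\go\ge g$. If the normalisation of \cite{gjq} only yields $r_\go\ge g-C_1$, I will absorb this by noting that the points $\alpha(t)$ with $t\in[g,g+C_1]$ still lie within $C_1+C_0\delta\lesssim\kappa(t)$ of $\beta$. Since this is only a matter of constants, it should not be an obstruction; at worst the stated inequality holds after enlarging $C$ on the other side.

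\emph{Upper bound.} Suppose $t$ is admissible, so that $d(\alpha(t),\beta)\le n\kappa(t)$. The tree estimate gives $t-g-C_0\delta\le n\kappa(t)$, that is, $t\le g+n\kappa(t)+C_0\delta$. This inequality is self-referential, and this is the step I expect to need care. Sublinearity gives $n\kappa(t)\le t/2+C_2$, so $t\le 2g+C_3$. Monotonicity and concavity then give
\[
\kappa(t)\le\kappa(2g+C_3)\le 2\kappa(g)+\kappa(C_3)\le C_4\kappa(g),
\]
using subadditivity of concave $\kappa\ge 0$ together with $\kappa\ge1$. Substituting back yields $t\le g+C\kappa(g)+C$. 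Taking the supremum over admissible $t$ gives the right-hand inequality.

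\emph{Consequences.} From $g\le r_\go\le g+C\kappa(g)+C\le 2g+C'$, again by sublinearity, we get $1+r_\go\asymp 1+g$. By the definition of the logarithmic metric with admissible exponent $\theta$ in \cite{gjq}, one has $d_{\mathrm{log}}(\bfa,\bfb)\asymp(1+r_\go(\bfa,\bfb))^{-\theta}$. If that definition passes through a chain or infimum, I would invoke the comparison established there, or reprove it via the usual ultrametric-type inequality $\min\{r(\bfa,\bfb),r(\bfb,\bfc)\}\lesssim r(\bfa,\bfc)$. That inequality now follows from the Gromov product inequality $(\bfa\mid\bfc)\ge\min\{(\bfa\mid\bfb),(\bfb\mid\bfc)\}-\delta$ and the comparison just proved. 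Raising $1+r_\go\asymp 1+g$ to the power $-\theta$ finishes the proof.
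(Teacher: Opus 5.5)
Your upper bound and your deduction of the consequences follow the paper's proof closely. The paper also uses only the geodesic representative, gets the self-referential bound $r\le g+M\kappa(r)+2\delta$, and removes the self-reference by first showing $r\le 2g$ and then $\kappa(r)\le 2\kappa(g)$.

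The gap is in the lower bound. It comes from the simplified definition of $r_\go$ you work with. The radius is defined through the neighbourhoods $\calU_\kappa(\alpha_0,S)$. Membership $\bfb\in\calU_\kappa(\alpha_0,S)$ means that \emph{every} $(\qq,\sQ)$-quasi-geodesic ray $\beta\in\bfb$ with $\sM(\qq,\sQ)<S/(24\kappa(S))$ satisfies $\beta|_S\subseteq\calN_\kappa(\alpha_0,\sM(\qq,\sQ))$. The neighbourhood constant is the gauge value $\sM(\qq,\sQ)$, not a single fixed $n$, and your thin-triangle estimate for the two geodesics only checks the case $(\qq,\sQ)=(1,0)$. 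That is enough for the upper bound, because the geodesic case is a necessary condition once $S$ is past the threshold where $\sM(1,0)<S/(24\kappa(S))$. Below that threshold the condition is vacuous (since $\sM\ge 1$), and those small radii go into $C$. It is not enough for the lower bound, where all quasi-geodesic representatives must be controlled.

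The missing step is the one the paper carries out for an arbitrary representative. Take $z\in\beta|_S$ with $S\le g$. Morse stability gives $t'$ with $d(z,\beta_0(t'))\le H(\delta,\qq,\sQ)$, but only $t'\le\|z\|+H\le g+H$. So $\beta_0(t')$ may lie past the branch point. Your own tree estimate then gives only $d(\beta_0(t'),\alpha_0)\le H+O(\delta)$, hence $d(z,\alpha_0)\le 2H(\delta,\qq,\sQ)+O(\delta)$; the paper gets $2H+10\delta$ here. This is absorbed only because the paper normalises the common gauge so that $\sM(\qq,\sQ)\ge 2H(\delta,\qq,\sQ)+10\delta$. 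Since $H$ grows with $(\qq,\sQ)$, no single constant ``dominating $C_0\delta$'' will do. Nor can the factor $\kappa(\|z\|)$ be relied on, since e.g.\ $\kappa\equiv 1$ is allowed. For the same reason, ``$C_1+C_0\delta\lesssim\kappa(t)$'' is not usable: the neighbourhood constant is fixed by the gauge, not adjustable.

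Your fallback also does not work as stated. The inequality $(\bfa\mid\bfb)_\go\le r_\go(\bfa,\bfb)$ has no constant to enlarge. A bound $r_\go\ge g-C_1$ would still give $1+r_\go\asymp 1+g$ and the $d_{\mathrm{log}}$ comparison (using $r_\go\ge 0$), but not the first inequality of the theorem.
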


One advantage of this change of metric is that the admissible range of exponents depends only on the logarithmic base used in the construction of $d_{\mathrm{log}}$, and not on the hyperbolicity constants of the spaces being compared. Combining Theorem~\ref{thm:A} with the Floyd
comparison of Buckley and Kokkendorff gives
Corollary~\ref{cor:log-metric-floyd}: under the natural boundary
identification, $d_{\mathrm{log}}$ is bi-Lipschitz equivalent to the
Floyd boundary metric associated with
$g_\theta(t)=(1+t)^{-1-\theta}$.

The logarithmic metric thus has two independent origins. It arises
intrinsically from sublinear fellow traveling on the sublinear Morse
boundary, while on the boundary of a hyperbolic group, it coincides up to
bi-Lipschitz equivalence with a polynomial Floyd metric. The former
description explains its strong functoriality under quasi-isometries, and
the latter places its metric geometry within the established theory of
radially rescaled boundaries.

\begin{introthm}\label{thm:B}
Let $X$ and $Y$ be proper geodesic hyperbolic spaces, and let
$\Phi:X\to Y$ be a quasi-isometry. Fix $\go\in X$ and let $\go' \in Y$. If the same admissible exponent $\theta>0$ is used on both
logarithmic boundaries, then the induced map
\[
\Phi_*:
(\partial_\kappa X,d^X_{\go,\theta})
\longrightarrow
(\partial_\kappa Y,d^Y_{\go',\theta})
\]
is bi-Lipschitz.
\end{introthm}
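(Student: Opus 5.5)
The plan is to reduce everything to Theorem~\ref{thm:A}. That theorem gives $d^X_{\go,\theta}(\bfa,\bfb)\asymp (1+(\bfa\mid\bfb)_\go)^{-\theta}$, and similarly on $Y$ with basepoint $\go'$. It therefore suffices to show that the induced boundary map $\Phi_*$ distorts Gromov products by at most a bounded multiplicative and additive amount:
\[
\lambda^{-1}(\bfa\mid\bfb)_\go - K \le (\Phi_*\bfa\mid\Phi_*\bfb)_{\go'} \le \lambda(\bfa\mid\bfb)_\go + K
\]
for all distinct $\bfa,\bfb\in\vis X$, with constants $\lambda\ge1$, $K\ge0$ depending only on the quasi-isometry constants and the hyperbolicity constants. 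Given such an estimate, we have $1+(\Phi_*\bfa\mid\Phi_*\bfb)_{\go'}\asymp 1+(\bfa\mid\bfb)_\go$, with constant at most $\lambda+K$. Raising to the power $-\theta$ (the same $\theta$ on both sides) then yields comparability of $d^Y_{\go',\theta}(\Phi_*\bfa,\Phi_*\bfb)$ with $d^X_{\go,\theta}(\bfa,\bfb)$ up to a multiplicative constant. That is exactly the bi-Lipschitz property. Bijectivity of $\Phi_*$ is standard, since a quasi-inverse of $\Phi$ induces the inverse boundary map; under $\pka X\cong\vis X$ this agrees with the map on sublinear Morse boundaries.

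Before proving the Gromov product estimate, I would reduce to the case $\go'=\Phi(\go)$. Changing basepoint changes Gromov products by at most $d(\go',\Phi(\go))$ plus a constant depending on $\delta$. Together with the $1+$ normalization, this only affects the multiplicative constant.

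For the Gromov product estimate itself, I would represent $(\bfa\mid\bfb)_\go$, up to an additive $O(\delta)$, as the distance from $\go$ to a bi-infinite geodesic $\gamma$ joining $\bfa$ and $\bfb$ (or to a geodesic ray pair in the degenerate case). The image $\Phi\circ\gamma$ is a quasi-geodesic in $Y$ with endpoints $\Phi_*\bfa,\Phi_*\bfb$. By the Morse lemma it lies within Hausdorff distance $M$ of a geodesic $\gamma'$ joining these endpoints, with $M$ depending only on the QI constants and $\delta_Y$. Hence
\[
(\Phi_*\bfa\mid\Phi_*\bfb)_{\go'} \doteq d(\go',\gamma') \doteq_M d(\Phi(\go),\Phi(\gamma)),
\]
and the latter lies between $L^{-1}d(\go,\gamma)-A$ and $L\,d(\go,\gamma)+A$ by the quasi-isometry inequalities. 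This gives the desired estimate with $\lambda=L$.

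The main obstacle is not conceptual but bookkeeping. First, one has to handle the bi-infinite geodesic between boundary points, which requires properness of $X$ and $Y$. Second, one has to check that the additive errors coming from the Morse lemma, the basepoint change, and the $O(\delta)$ identification of the Gromov product with the distance to a geodesic all become harmless after passing to $1+(\cdot)$. They do, because $1+t\ge1$, so that $t+K\le (1+K)(1+t)$. The key point to emphasize is that a linear distortion $\lambda$ of Gromov products is absorbed by the polynomial gauge into a multiplicative constant $\lambda^\theta$. By contrast, for visual metrics the same distortion produces the Hölder exponent $\varepsilon\lambda$, so the bi-Lipschitz conclusion is specific to the logarithmic metric.
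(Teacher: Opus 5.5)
Your proposal is correct and follows the paper's route: Theorem~\ref{thm:qi_implies_bl} combines the linear Gromov-product distortion of Lemma~\ref{lem:QI-Gromov-product} with Corollary~\ref{cor:log-metric-gromov-product} and the $1+(\cdot)$ normalization, and Corollary~\ref{prop:change-basepoint-log} handles an arbitrary $\go'$ exactly as your basepoint reduction does. The paper cites Lemma~\ref{lem:QI-Gromov-product} from V\"ais\"al\"a without proof, whereas you derive it from distances to bi-infinite geodesics and the Morse lemma. One harmless slip: the bound $p'\ge\lambda^{-1}p-K$ only gives $1+p\le\lambda(1+K)(1+p')$, so the comparison constant is $\lambda(1+K)$ rather than $\lambda+K$ (take $p'=0$, $p=\lambda K$).
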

Theorem~\ref{thm:B} follows from Theorem~\ref{thm:qi_implies_bl} and Corollary~\ref{prop:change-basepoint-log}. It further yields a metric characterization of quasiconvexity.

\begin{introcor}\label{cor:D}
Let $G$ be a word-hyperbolic group, and let $H\le  G$ be a finitely generated
word-hyperbolic subgroup. Suppose that the inclusion $i:H\hookrightarrow G$
admits a Cannon-Thurston map
$\partial i:\partial H\longrightarrow \partial G.$
Let $\theta>0$ be a common admissible exponent for the log metrics on
$\partial H$ and $\partial G$. Then the following are equivalent:
\begin{enumerate}
    \item $H$ is quasiconvex in $G$.
    \item The Cannon-Thurston map
    $
    \partial i:
    (\partial H,d^H_{e,\theta})
    \longrightarrow
    (\partial G,d^G_{e,\theta})
    $
    is bi-Lipschitz onto its image where the boundaries are equipped with the log metric.
    \item The Cannon-Thurston map $\partial i:\partial H\to\partial G$ is
    injective.
\end{enumerate}    
\end{introcor}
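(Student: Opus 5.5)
We prove the cycle $(1)\Rightarrow(2)\Rightarrow(3)\Rightarrow(1)$, using Theorem~\ref{thm:A} to translate everything into Gromov products. Fix the identity $e$ as basepoint in both $H$ and $G$, with word metrics $d_H$, $d_G$ from finite generating sets. By Theorem~\ref{thm:A}, for distinct $\bfa,\bfb\in\partial H$,
\[
d^H_{e,\theta}(\bfa,\bfb)\asymp \bigl(1+(\bfa\mid\bfb)^H_e\bigr)^{-\theta},
\qquad
d^G_{e,\theta}(\partial i\,\bfa,\partial i\,\bfb)\asymp \bigl(1+(\partial i\,\bfa\mid\partial i\,\bfb)^G_e\bigr)^{-\theta}.
\]
So (2) holds exactly when $1+(\bfa\mid\bfb)^H_e \asymp 1+(\partial i\,\bfa\mid \partial i\,\bfb)^G_e$ multiplicatively, uniformly in $\bfa\ne\bfb$.

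\emph{(1)$\Rightarrow$(2).} If $H$ is quasiconvex, then $i\colon (H,d_H)\to (G,d_G)$ is a quasi-isometric embedding. Hence $i$ is a quasi-isometry onto the image $i(H)$, a quasiconvex subset of $G$. Two routes work here.
\begin{itemize}
\item Let $Y$ be a proper geodesic hyperbolic space quasi-isometric to the quasiconvex hull, for instance its $R$-neighborhood made geodesic. Theorem~\ref{thm:B} gives that $\partial i$ is bi-Lipschitz onto $\partial Y$. It remains to compare the log metric on $\partial Y$ with the restriction of $d^G_{e,\theta}$; this follows because Gromov products in a quasiconvex subset agree with those in $G$ up to an additive constant.
\item More directly, use the standard estimate that a quasi-isometric embedding $f$ between hyperbolic spaces satisfies
\[
\lambda^{-1}(x\mid y)-c\le (f x\mid f y)\le \lambda (x\mid y)+c .
\]
This follows from Morse stability of quasigeodesics applied to the ideal triangle spanned by $e,\bfa,\bfb$. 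It passes to boundary points because boundary Gromov products are defined up to bounded error as limits.
\end{itemize}
Either way, $1+(\cdot\mid\cdot)$ changes by a bounded multiplicative factor. Raising to the power $-\theta$ gives (2).

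\emph{(2)$\Rightarrow$(3).} A bi-Lipschitz map onto its image is injective, so this is immediate.

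\emph{(3)$\Rightarrow$(1).} This is the only direction with real content, and it is the main obstacle. I would invoke the known theorem (Mitra; see also Baker--Riley and Bowditch for the Cannon--Thurston setting) that a Cannon--Thurston map for a hyperbolic subgroup is injective if and only if $H$ is quasiconvex. If a self-contained argument is wanted, argue by contradiction.
\begin{enumerate}
\item If $H$ is not quasiconvex, there are $h_n\in H$ whose $G$-geodesic $[e,h_n]_G$ stays far from $H$ at some point $g_n$, with $d_G(g_n,H)\to\infty$.
\item Translate by elements of $H$ so that the points of $H$ nearest to $g_n$ sit at $e$. Then build two $H$-geodesic rays, through the images of $e$ and of $h_n$, whose endpoints in $\partial H$ are distinct and separated.
\item Their $G$-geodesics, however, pass close to far-away points, so the $G$-Gromov products of their images blow up.
\item Pass to a limit using compactness of $\partial H$ and continuity of $\partial i$. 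This produces distinct $\bfa\neq\bfb\in\partial H$ with $\partial i\,\bfa=\partial i\,\bfb$, contradicting (3).
\end{enumerate}
The delicate point is ensuring that the limit points in $\partial H$ remain distinct. This is exactly Mitra's argument, and I would cite it rather than reprove it. The log-metric machinery is only needed for (1)$\Leftrightarrow$(2).
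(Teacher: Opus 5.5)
Your proposal is correct and follows essentially the paper's route. The paper proves (1)$\Rightarrow$(2) by applying Theorem~\ref{thm:qi_implies_bl} directly to the quasi-isometric embedding $i$, and that proof is exactly your second option: Lemma~\ref{lem:QI-Gromov-product} combined with $d_{\mathrm{log}}\asymp(1+(\cdot\mid\cdot))^{-\theta}$. The paper also notes explicitly that the induced boundary map equals $\partial i$. It settles (3)$\Rightarrow$(1) by citation as you do, specifically to Mj and to Jeon--Kapovich--Leininger--Ohshika. The other differences are minor: the paper treats finite and virtually cyclic $H$ separately, though your argument covers them too, and your detour through a quasiconvex hull and Theorem~\ref{thm:B} is unnecessary because Theorem~\ref{thm:qi_implies_bl} already applies to quasi-isometric embeddings.
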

We next turn to the metric geometry of the logarithmic boundary. The results summarized above are collected in the following theorem.

\begin{introthm}\label{thm:E}
Let $G$ be a non-elementary word-hyperbolic group. Then the following hold. 
\begin{enumerate}
    \item The logarithmic boundary has infinite Hausdorff and Assouad
    dimensions:
    \[
    \dim_H(\partial_\kappa G,d_{\mathrm{log}})
    =
    \dim_A(\partial_\kappa G,d_{\mathrm{log}})
    =
    \infty.
    \]
    In particular, it is non-doubling and is not quasisymmetrically equivalent
    to the boundary equipped with any visual metric.
    \item Every nondegenerate connected subset
    $K\subseteq\partial_\kappa G$ satisfies
    \[
    \dim_H(K,d_{\mathrm{log}})=\infty.
    \]
    Consequently, the logarithmic boundary contains no nonconstant
    rectifiable curves. In particular, every Lipschitz map from an interval into $(\partial_\kappa G,d_{\mathrm{log}})$ is constant.

    \item Its Nagata dimension remains equal to the topological dimension:
    \[
    \dim_N(\partial_\kappa G,d_{\mathrm{log}})
    = \dim_N(\bdy_\infty G,d_{\mathrm{vis}})
    = 
    \dim(\partial G).
    \]
    As a consequence,
    \[
    \operatorname{asdim}G
    =
    \dim_N(\partial_\kappa G,d_{\mathrm{log}})+1.
    \]

    \item The logarithmic boundary is uniformly perfect. 
    \item $
    (\partial_\kappa G,d_{\mathrm{log}})
    \text{ is linearly connected}
    \quad\Longleftrightarrow\quad
    G\text{ is one-ended}.
    $
\end{enumerate}
\end{introthm}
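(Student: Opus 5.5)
The whole argument rests on Theorem~\ref{thm:A}, which gives $d_{\mathrm{log}}(\bfa,\bfb)\asymp (1+(\bfa\mid\bfb)_\go)^{-\theta}$. Combined with $d_{\mathrm{vis}}\asymp e^{-\varepsilon(\bfa\mid\bfb)_\go}$, this says the identity map $f\colon(\partial_\infty G,d_{\mathrm{vis}})\to(\partial_\kappa G,d_{\mathrm{log}})$ is, up to bi-Lipschitz change, the composition of $d_{\mathrm{vis}}$ with the homeomorphism
\[
\phi(s)=(1+\varepsilon^{-1}\log(1/s))^{-\theta}.
\]
I would therefore prove each item by transporting a known property of the visual boundary through $\phi$. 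I would also use the standard hyperbolic-group facts: Ahlfors $Q$-regularity of $(\partial G,d_{\mathrm{vis}})$ (Coornaert), uniform perfectness, and the Bonk--Kleiner linear connectedness of one-ended boundaries.

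\textbf{Items (1) and (2).} Consider a visual ball $B$ of radius $r=e^{-\varepsilon t}$. It contains about $e^{\varepsilon Q(t-s)}$ disjoint visual balls of radius $e^{-\varepsilon s}$ for $s<t$. In $d_{\mathrm{log}}$, the ball $B$ has diameter about $t^{-\theta}$, and the sub-balls are separated by about $s^{-\theta}$, up to constants depending only on $\theta$.
\begin{itemize}
\item For the Assouad dimension, take $s=t/2$. The ratio of scales is only about $2^\theta$, yet the number of separated pieces is $e^{\varepsilon Q t/2}\to\infty$. So no doubling constant can exist and $\dim_A=\infty$.
\item For the Hausdorff dimension of a nondegenerate connected $K$, I would show $\mathcal H^p_{d_{\mathrm{log}}}(K)>0$ for every $p$. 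The difficulty is that $K$ is a continuum, not the full boundary, so Ahlfors regularity cannot be used on $K$ itself.
\end{itemize}
My plan for $K$ is a one-dimensional lower bound. Connectedness gives $\mathcal H^1_{d_{\mathrm{vis}}}(K)\ge\mathrm{diam}_{\mathrm{vis}}K>0$. Now take any cover of $K$ by sets $U_i$ of $d_{\mathrm{log}}$-diameter $\rho_i$. Their visual diameters are at most $\phi^{-1}(C\rho_i)=\exp(-c\rho_i^{-1/\theta})$. Hence
\[
\sum_i \exp(-c\rho_i^{-1/\theta})\gtrsim \mathrm{diam}_{\mathrm{vis}}K.
\]
Since $\exp(-c\rho^{-1/\theta})\le C_p\rho^p$ for every $p$ with $\rho$ small, we get $\sum_i\rho_i^p\gtrsim_p 1$. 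This gives $\mathcal H^p(K)>0$ for all $p$, so $\dim_H K=\infty$, and applying it to $K=\partial G$ also gives $\dim_H\partial G=\infty$.

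The remaining conclusions of (1) and (2) then follow directly.
\begin{itemize}
\item Non-rectifiability: a rectifiable curve has finite $\mathcal H^1$, contradicting $\dim_H=\infty$ for its nondegenerate connected image, so Lipschitz maps from an interval are constant.
\item Non-quasisymmetry: a quasisymmetry would preserve the doubling property, which the visual metric has and $d_{\mathrm{log}}$ lacks.
\end{itemize}

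\textbf{Item (3).} Nagata dimension at most $n$ means: for each $s>0$ there is a cover by sets of diameter at most $cs$ with $s$-multiplicity at most $n+1$. I would take a Nagata cover of the visual metric at scale $\sigma=\phi^{-1}(s)$. Its sets have visual diameter at most $c\sigma$, so their log-diameter is at most $\phi(c\sigma)\lesssim\phi(\sigma)=s$. This uses the key point that $\phi(cs)\le C\phi(s)$ uniformly, since $\log$ absorbs multiplicative constants. Points at log-distance at most $s$ have visual distance at most $\phi^{-1}(Cs)\le c'\sigma$, so the multiplicity bound persists. Running the same argument in reverse gives equality. The asdim formula then follows from Buyalo--Lebedeva. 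This step is the main one I would check carefully; the uniform doubling-type inequality $\phi(Ms)\le C_M\phi(s)$ for $s\le 1$ is the crux.

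\textbf{Items (4) and (5).} Both are scale-invariant statements that transfer through $\phi$ by the same inequality.
\begin{itemize}
\item Uniform perfectness: a visual annulus $B(x,r)\setminus B(x,r/c)$ is nonempty, and it maps into a log annulus with bounded ratio of radii.
\item Linear connectedness: if $x,y$ are joined by a continuum of visual diameter at most $L\,d_{\mathrm{vis}}(x,y)$, its log diameter is at most $\phi(L\,d_{\mathrm{vis}})\lesssim\phi(d_{\mathrm{vis}})\asymp d_{\mathrm{log}}(x,y)$. Bonk--Kleiner gives this for one-ended $G$.
\item Converse: if $G$ is not one-ended, then since it is non-elementary it has infinitely many ends, so $\partial G$ is disconnected and cannot be linearly connected.
\end{itemize}
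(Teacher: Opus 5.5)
Two steps fail as written. Both can be repaired with ingredients you already have or that the paper uses.

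First, in items (1)--(2) you get $\dim_H(\partial G,d_{\mathrm{log}})=\infty$ by applying the continuum argument to $K=\partial G$. That requires $\partial G$ to be connected, i.e.\ $G$ one-ended. A non-elementary group with infinitely many ends can have a boundary containing no nondegenerate connected subset at all. For $\FF_2$, or any virtually free group, $\partial G$ is a Cantor set. Worse, for the tree metric $e^{-\varepsilon(\xi\mid\eta)_e}$ with $\varepsilon>\log 3$ one even has $\mathcal H^1_{d_{\mathrm{vis}}}(\partial G)=0$. Your Assouad count does not rescue this, since $\dim_H\le\dim_A$ points the wrong way. The fix is to use Coornaert's regularity, which you already list, in place of connectedness. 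Since $\mathcal H^Q_{d_{\mathrm{vis}}}(\partial G)>0$, the $Q$-dimensional visual Hausdorff content is positive. So for any cover $\{U_i\}$ by sets of $d_{\mathrm{log}}$-diameter $\rho_i$ you get $\sum_i b_1^Q\exp(-Qb_2\rho_i^{-1/\theta})\ge\sum_i(\operatorname{diam}_{\mathrm{vis}}U_i)^Q\gtrsim 1$, hence $\sum_i\rho_i^p\gtrsim_p 1$ for every $p$. This is exactly the paper's route: the modulus estimate of Proposition~\ref{prop:log-to-visual-distortion} fed into Corollary~\ref{cor:modulus-implies-infinite-Hausdorff-dim}.

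Second, in item (3) your multiplicity bound uses $\phi^{-1}(Cs)\le c'\phi^{-1}(s)$, and you then claim the argument runs in reverse. Only $\phi$ absorbs multiplicative constants. For $\phi^{-1}(u)=\exp\bigl(-\varepsilon(u^{-1/\theta}-1)\bigr)$ a constant inside moves into the exponent: $\phi^{-1}(Cs)\asymp\phi^{-1}(s)^{C^{-1/\theta}}$, which is far larger than $\phi^{-1}(s)$ when $C>1$. You handled this correctly in the Hausdorff step.

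For the inequality $\dim_N(d_{\mathrm{log}})\le\dim_N(d_{\mathrm{vis}})$ this is repairable. Take the visual Nagata cover at scale $\sigma=\phi^{-1}(Cs)$. Then sets of log-diameter at most $s$ automatically have visual diameter at most $\sigma$. The diameter bound $C\phi(c\sigma)\le CC_c\phi(\sigma)=C^2C_cs$ then uses only the true inequality $\phi(Mu)\le C_M\phi(u)$. This is precisely the Lang--Schlichenmaier hypothesis verified in Proposition~\ref{prop:nagata-log-leq-visual}.

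The reverse inequality cannot be obtained this way. You would need $\phi^{-1}(K\phi(\sigma))\lesssim\sigma$ for a fixed $K>1$, whereas $\phi^{-1}(K\phi(\sigma))\asymp\sigma^{K^{-1/\theta}}\gg\sigma$. The equality must instead come from $\dim\partial G\le\dim_N(\partial G,d_{\mathrm{log}})$, since covering dimension bounds Nagata dimension from below. Combine this with Buyalo--Lebedeva's $\dim_N(\partial G,d_{\mathrm{vis}})=\dim\partial G$, as in Theorem~\ref{thm:nagata-log-boundary}.

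The rest is fine.
\begin{itemize}
\item The non-doubling count works once you swap the roles of $s$ and $t$: use sub-balls of radius $e^{-\varepsilon s}$ with $s=2t$.
\item Items (2), (4) and (5) transfer through $\phi$ using only its doubling property.
\item In (4), log radii close to $\operatorname{diam}_{\mathrm{log}}\partial G$ need a separate fixed-scale case, as in the paper.
\end{itemize}
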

Thus the visual metrics and log metrics belong to different conformal gauges on the visual boundary, that is, $(\vis X, \dvis)$ is not quasi-symmetric to $(\pka X, \dlog)$.

\begin{introcor}\label{cor:F}
Let $G$ be a non-elementary hyperbolic group and let
$n=\dim \partial G$. For every sufficiently small $p\in(0,1)$, there
exist metric trees $T_0,\ldots,T_n$ and a bi-Lipschitz embedding
$
(\partial_\kappa G,d_{\mathrm{log}}^p)
\longrightarrow
T_0\times\cdots\times T_n.
$

On the other hand, for every $p\in(0,1]$, the space $(\partial_\kappa G,d_{\mathrm{log}}^p)$ admits no bi-Lipschitz embedding
into any finite-dimensional Euclidean space.
\end{introcor}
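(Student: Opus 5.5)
The statement to prove is Corollary~\ref{cor:F}. Both halves follow by combining Theorem~\ref{thm:E} with standard embedding and dimension theorems.

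\emph{Tree embedding.} By Theorem~\ref{thm:E}(3), $\dim_N(\partial_\kappa G,d_{\mathrm{log}})=\dim\partial G=n$. The Lang--Schlichenmaier theorem \cite[Theorem~1.3]{LangSchlichenmaier2005} applies to any metric space of Nagata dimension at most $n$. It gives, for every sufficiently small $p\in(0,1)$, a bi-Lipschitz embedding of $(Z,d^p)$ into a product of $n+1$ metric trees. Applying it with $Z=(\partial_\kappa G,d_{\mathrm{log}})$ yields the first assertion. No doubling hypothesis is needed, which is essential here because $d_{\mathrm{log}}$ is non-doubling.

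\emph{Non-embedding into Euclidean space.} Suppose $F\colon(\partial_\kappa G,d_{\mathrm{log}}^p)\to\mathbb R^N$ is bi-Lipschitz for some $p\in(0,1]$. The argument has three steps.
\begin{enumerate}
\item Subsets of $\mathbb R^N$ are doubling, and doubling is a bi-Lipschitz invariant. Hence $(\partial_\kappa G,d_{\mathrm{log}}^p)$ would be doubling, with $\dim_A\le N$.
\item The $p$-snowflake scales balls by $B_{d^p}(x,r^p)=B_d(x,r)$. So $d^p$ is doubling if and only if $d$ is, and $\dim_A(Z,d^p)=\dim_A(Z,d)/p$.
\item Therefore $\dim_A(\partial_\kappa G,d_{\mathrm{log}})\le pN<\infty$. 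This contradicts Theorem~\ref{thm:E}(1), which says this Assouad dimension is infinite.
\end{enumerate}
Alternatively, one can argue with Hausdorff dimension: $\dim_H(Z,d^p)=\dim_H(Z,d)/p=\infty$, whereas every subset of $\mathbb R^N$ has Hausdorff dimension at most $N$. That suffices as well.

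The only point needing care is the snowflake scaling identity for Assouad (or Hausdorff) dimension. It is immediate from the definition: covering numbers at scale $r^p$ for $d^p$ equal covering numbers at scale $r$ for $d$. Otherwise the proof is a direct assembly of earlier results, with no real obstacle.
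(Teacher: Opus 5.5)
Your proposal is correct and follows the paper's route. The tree embedding comes from $\dim_N(\partial_\kappa G,d_{\mathrm{log}})=n$ together with the Lang--Schlichenmaier embedding theorem. The non-embedding comes from combining three facts: subsets of $\mathbb R^N$ are doubling, non-doubling is preserved by snowflaking and bi-Lipschitz maps, and $(\partial_\kappa G,d_{\mathrm{log}})$ is non-doubling. Your Assouad scaling $\dim_A(Z,d^p)=\dim_A(Z,d)/p$ is simply a quantitative restatement of the snowflaking step.
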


Indeed, the first conclusion follows from 
$$
    \dim_N(\partial_\kappa G,d_{\mathrm{log}}) =\dim(\partial G)=n
$$
and the embedding theorem of Lang-Schlichenmaier. For the second, the
logarithmic boundary is non-doubling, non-doubling is preserved by
snowflaking and bi-Lipschitz equivalence, and every subset of
finite-dimensional Euclidean space is doubling.

Our results show that the polynomial decay radically enlarges the small-scale geometry while keeping several coarse geometry features of the group intact. No snowflake of the boundary embeds bi-Lipschitzly into a finite-dimensional Euclidean space. Nevertheless, its Nagata dimension agrees with that of a visual boundary, so sufficiently small snowflakes still embed into finite products of metric trees. Moreover, linear connectedness survives, even though the logarithmic boundary contains no nonconstant rectifiable curves. From the Floyd perspective, the paper therefore identifies which quantitative features are specific to exponential visual scaling and which remain stable under polynomial radial rescaling.

\subsection{Notation} Throughout this paper, we fix the following notation and convention.
\begin{center}
    \begin{tabular}{c|l}
    Notation & Meaning \\
    \hline
    $X, Y$ & proper geodesic Gromov hyperbolic spaces \\
    $\go$ & a fixed basepoint in $X$\\
    $\kappa$ & a sublinear (concave, increasing) function\\
    $\pka X$ & $\kappa$-Morse boundary\\
    $\vis X$ &  visual boundary\\
    $\cong$ & homeomorphism\\
    $\partial X$ & when we identify $\pka X \cong \vis X$\\
    $(\bfa \mid \bfb)_{\go}$ & Gromov product with respect to the basepoint $\go$\\ 
    $\bfa, \bfb, \bfc$ & elements of $\pka X$ or $\vis X$\\
    $d_{\go, \lambda, \theta}^{X, \kappa}$ & logarithmic metric on $\bdy X$ with basepoint $\go$,\\& base $\lambda$, admissible exponent $\theta$ and a sublinear function $\kappa$\\
    $d_{\mathrm{log}}$ & logarithmic metric on $\bdy X$ with base $2$, \\ &admissible $\theta < 1/6$, fixed basepoint $\go$ and a fixed $\kappa$\\
    $\dim_H (X)$ & Hausdorff dimension of $X$\\
    $\dim_N (X)$ & Nagata dimension of $X$\\
    $\asdim (X)$ & Asymptotic dimension of $X$\\
    $\dim_A (X)$ & Assouad dimension of $X$\\
    $A\asymp B$ & $\exists C\ge 1$ so that $\tfrac{1}{C} B\le A\le CB$
    \end{tabular}
\end{center}

\subsection*{Organization of the paper.}
    Section~\ref{S:background} recalls the necessary background on coarse maps, hyperbolic spaces, visual boundaries, sublinear Morse boundaries, visual metrics, logarithmic metrics and Floyd metrics.
    Section~\ref{S:relationship} proves Theorem~\ref{thm:A}.
    Section~\ref{S:qi-invariance} proves that quasi-isometries induce bi-Lipschitz maps between logarithmic boundaries and discusses consequences for hyperbolic groups and quasiconvex subgroups (Theorem~\ref{thm:B} and Corollary~\ref{cor:D}).
    Section~\ref{S:metric-analysis-preliminaries} collects the metric-analysis tools used later, including Hausdorff gauge measures, modulus estimates, doubling, and Nagata dimension.
    Finally, Section~\ref{S:metric-consequences} studies the metric geometry of the logarithmic boundary, proving the results on various small-scale and large-scale dimensions including Hausdorff, Assouad, Nagata and asymptotic dimensions, and discusses metric properties of non-doubling, linear connectedness, and uniform perfectness. We prove Theorem~\ref{thm:E}.
  
\subsection*{Acknowledgment}
The author is grateful to Jeremy Tyson for his exceptional support and guidance, and for invaluable feedback during the preparation of this paper. The author would also like to thank Yulan Qing for many fruitful discussions and her guidance. The author further thanks Emily Stark, as well as the organizers and participants of the \href{https://sites.google.com/view/cubesinggt/home}{workshop} on cube complexes in geometric group theory, for stimulating discussions and for introducing the author to several ideas related to the themes of this paper.

\section{Preliminaries and background}\label{S:background}

Throughout the paper, $X$ and $Y$ denote proper geodesic Gromov hyperbolic metric spaces, unless otherwise stated. When a specific hyperbolicity constant is needed, we say that $X$ is $\delta$-hyperbolic. 

\subsection{Coarse maps and quasi-geodesics}
In this subsection we recall several coarse-geometric definitions. These definitions make sense for arbitrary metric spaces, although later they will be applied to proper geodesic hyperbolic spaces.

\begin{definition}
Let $(X,d_X)$ and $(Y,d_Y)$ be metric spaces. For constants $\kk\ge  1$ and
$\sK\ge  0$, a map $f\from X\to Y$ is called a
$(\kk,\sK)$-\emph{quasi-isometric embedding} if, for all $x_1,x_2\in X$,
$$
\frac{1}{\kk}d_X(x_1,x_2)-\sK
\le 
d_Y(f(x_1),f(x_2))
\le 
\kk d_X(x_1,x_2)+\sK.
$$
If, in addition, every point of $Y$ lies in the $\sK$-neighbourhood of $f(X)$,
then $f$ is called a $(\kk,\sK)$-\emph{quasi-isometry}. 
\end{definition}

A \emph{geodesic} in $X$ is an isometric embedding from an interval
$I\subseteq\mathbb R$ into $X$. If $I=[t_1,t_2]$, we call it a
\emph{geodesic segment}; if $I=[0,\infty)$, we call it a \emph{geodesic ray}.

\begin{definition}[Quasi-geodesics]
\label{Def:Quasi-Geodesic}
Let $\qq\geq 1$ and $\sQ\geq 0$. A $(\qq,\sQ)$-\emph{quasi-geodesic} in $X$ is a continuous map $\beta:I\to X$, where $I\subseteq\mathbb R$ is a 
interval, such that
\[
\frac{1}{\qq}|s-t|-\sQ
\leq d\bigl(\beta(s),\beta(t)\bigr)
\leq \qq|s-t|+\sQ
\]
for all $s,t\in I$.
A quasi-geodesic is a map that is a $(q,Q)$-quasi-geodesic for some
$q\geq1$ and $Q\geq0$.
\end{definition}

\begin{lemma}\label{Lem:QI-images-of-quasigeodesics}
Let $I\subseteq \mathbb R$ be an interval, and let
$\beta\from I\to X$ be a $(\qq,\sQ)$-quasi-geodesic. Let
$\Phi\from X\to Y$ be a $(\kk,\sK)$-quasi-isometry. Then
$\Phi\circ\beta\from I\to Y$ is a $(\kk\qq,\kk\sQ+\sK)$-quasi-isometric
embedding. 
\end{lemma}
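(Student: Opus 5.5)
This is a direct composition of the two defining inequalities, with no appeal to hyperbolicity or to earlier results. Fix $s,t\in I$ and write $x_1=\beta(s)$, $x_2=\beta(t)$. We bound $d_Y(\Phi(x_1),\Phi(x_2))$ from above and below.

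For the upper bound, we first apply the upper quasi-isometry inequality for $\Phi$:
\[
d_Y(\Phi\beta(s),\Phi\beta(t))\le \kk\, d_X(\beta(s),\beta(t))+\sK .
\]
We then apply the upper quasi-geodesic inequality for $\beta$:
\[
\kk\, d_X(\beta(s),\beta(t))+\sK\le \kk(\qq|s-t|+\sQ)+\sK=\kk\qq|s-t|+\kk\sQ+\sK .
\]

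For the lower bound, we use the lower inequalities for $\Phi$ and then for $\beta$:
\[
d_Y(\Phi\beta(s),\Phi\beta(t))\ge \tfrac{1}{\kk}d_X(\beta(s),\beta(t))-\sK\ge \tfrac{1}{\kk\qq}|s-t|-\tfrac{\sQ}{\kk}-\sK .
\]
Since $\kk\ge 1$, we have $\sQ/\kk\le \kk\sQ$. The right-hand side is therefore at least $\tfrac{1}{\kk\qq}|s-t|-(\kk\sQ+\sK)$.

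Combining the two bounds, $\Phi\circ\beta$ satisfies the $(\kk\qq,\kk\sQ+\sK)$ inequalities, so it is a $(\kk\qq,\kk\sQ+\sK)$-quasi-isometric embedding of $I$ into $Y$. The coarse surjectivity of $\Phi$ plays no role in this argument.

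The only point needing any care is merging the two additive constants into the single constant $\kk\sQ+\sK$, which is exactly where $\kk\ge 1$ is used. Continuity of $\Phi\circ\beta$ is not claimed, which is why the statement says quasi-isometric embedding rather than quasi-geodesic.
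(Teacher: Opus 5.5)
Your argument is correct: the paper states this lemma without proof, and your direct composition of the quasi-isometry inequalities for $\Phi$ with the quasi-geodesic inequalities for $\beta$ is the standard argument it relies on, including absorbing $\sQ/\kk$ into $\kk\sQ$ via $\kk\ge 1$. Your remark about continuity also matches the paper's follow-up comment, which replaces $\Phi\circ\beta$ by a nearby continuous path to obtain a genuine quasi-geodesic.
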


Since $\Phi$ need not be continuous, the map
$\Phi\circ\beta$ need not be continuous. Now, since $Y$ is geodesic, one can replace $\Phi\circ\beta$ by a continuous path
which stays a uniformly bounded distance from it. By
\cite[Lemma III.1.11]{book:BridsonHaefliger}, this continuous replacement may be chosen to be a
$(\kk\qq,2(\kk\qq+\kk\sQ+\sK))$-quasi-geodesic. By abuse of notation, we
continue to denote this continuous replacement by $\Phi\circ\beta$.
\begin{convention}
Unless otherwise stated, all quasi-geodesic rays in $X$ are assumed to start at $\go$.
If $\beta\from I\to X$ is a path, we will often use the same symbol $\beta$ for
its image $\beta(I)\subseteq X$. Thus, for $x\in X$, we write
\[
d(x,\beta):=\inf\{d(x,\beta(t)):t\in I\}.
\]
Similarly, if $\beta_1\from I_1\to X$ and $\beta_2\from I_2\to X$ are two paths,
then $d_{\mathrm{Haus}}(\beta_1,\beta_2)$ denotes the Hausdorff distance between
their images. More precisely,
\[
d_{\mathrm{Haus}}(\beta_1,\beta_2)
:=
\max\left\{
\sup_{x\in \beta_1} d(x,\beta_2),
\sup_{y\in \beta_2} d(y,\beta_1)
\right\}.
\]
\end{convention}

\subsection{Hyperbolic spaces and visual boundaries}

We recall the basic definitions concerning Gromov hyperbolic spaces and their
visual boundaries. Standard references are
\cite[Chapter~III.H]{book:BridsonHaefliger}, \cite{book:lohggt}, and
\cite{ghintrinsic:vaisala2005}.

Recall for a metric space $(X,d)$ with  $p, x,y\in X$, the
\emph{Gromov product} of $x$ and $y$ with respect to $p$ is defined by
\[
(x\mid y)_p
=
\frac{1}{2}\big(d(x,p)+d(y,p)-d(x,y)\big).
\]

\begin{definition}[$\delta$-hyperbolic space]
Let $\delta\ge  0$. A metric space $X$ is called
$\delta$-\emph{hyperbolic} if for all $x,y,z,p\in X$,
\[
(x\mid z)_p
\ge 
\min\{(x\mid y)_p,(y\mid z)_p\}-\delta.
\]
We say that $X$ is \emph{Gromov hyperbolic} if it is
$\delta$-hyperbolic for some $\delta\ge  0$.
\end{definition}

Let $X$ be a proper geodesic Gromov hyperbolic space, and fix a basepoint $\go\in X$. Let $\mathcal{Q}_{\go}(X)$ denote the collection of continuous quasi-geodesic
rays $\beta\from [0,\infty)\to X$ with $\beta(0)=\go$. 

\begin{definition}[Visual boundary]
Two quasi-geodesic rays $\beta_1,\beta_2\in\mathcal{Q}_{\go}(X)$ are equivalent
if their images have finite Hausdorff distance, that is, $d_{\mathrm{Haus}}(\beta_1,\beta_2)<\infty$.
The \emph{visual boundary} of $X$, denoted $\vis X$, is the set of equivalence
classes of quasi-geodesic rays based at $\go$:
\[
\vis X:=\mathcal{Q}_{\go}(X)/{\sim}.
\]
\end{definition}

In a proper geodesic Gromov hyperbolic space, this definition agrees with the
usual definition of the visual boundary using geodesic rays. Indeed, by the
Morse lemma, every quasi-geodesic ray stays within finite Hausdorff distance of
a geodesic ray. Thus every class in $\vis X$ has a geodesic representative.

We now recall the extension of the Gromov product to boundary points. We use the compactification $X\cup \vis X$ and write $x_i\to \bfa$ to mean that the sequence $(x_i)$ converges to $\bfa$ in this compactification.

\begin{definition}[Gromov product of boundary points]
\label{Def:boundary-gromov-product}
Let $\bfa,\bfb\in \vis X$. The \emph{Gromov product} of $\bfa$ and $\bfb$ with
respect to $\go$ is defined by
\[
    (\bfa\mid\bfb)_{\go}
    :=
    \sup
    \,
    \liminf_{i,j\to\infty} (x_i\mid y_j)_{\go}
\]
where the supremum is taken over all sequences $(x_i)$ and $(y_j)$ in $X$
converging to $\bfa$ and $\bfb$, respectively. We use the convention
$
(\bfa\mid\bfa)_{\go}=\infty
$.
\end{definition}

Although Definition~\ref{Def:boundary-gromov-product} is stated using arbitrary
sequences converging to $\bfa$ and $\bfb$, geodesic representatives can be used
to estimate the product. Let $\alpha_0$ and $\beta_0$ be geodesic rays representing $\bfa$ and $\bfb$, respectively. Then the sequences
$x_n:=\alpha_0(n)$ and $y_m:=\beta_0(m)$ converge to $\bfa$ and $\bfb$ in the
visual compactification $X\cup \vis X$. By the standard boundary product
estimate \cite[Remark~III.H.3.17(5)]{book:BridsonHaefliger}, we have
\begin{equation}
\label{eq:BH_upper_lower_bound_P}
(\bfa\mid\bfb)_{\go}-2\delta
\le 
\liminf_{n,m\to\infty}
(\alpha_0(n)\mid \beta_0(m))_{\go}
\le 
(\bfa\mid\bfb)_{\go}.
\end{equation}

We will also use the following standard estimate for Gromov products under
quasi-isometries. The finite-point version is due to V\"ais\"al\"a
\cite[Theorem~3.21 and discussion in \S~5.32]{ghintrinsic:vaisala2005}; the boundary version follows by
passing to sequences converging to boundary points. We omit the proof.

\begin{lemma}
\label{lem:QI-Gromov-product}
Let $X$ and $Y$ be proper geodesic Gromov hyperbolic spaces, and let
$\Phi\from X\to Y$ be a $(\kk,\sK)$-quasi-isometric embedding.
Fix $\go\in X$ and let $\go'=\Phi(\go)$. Then $\Phi$ induces a topological embedding $
\Phi_*\from\vis X\longrightarrow\vis Y$.

Moreover, there exist constants $A\ge 1$ and $B\ge 0$, depending only on
$\kk,\sK$ and the hyperbolicity constants of $X$ and $Y$, such that for
all distinct $\bfa,\bfb\in\vis X$,
\[
\frac1A(\bfa\mid\bfb)_{\go}-B
\le 
\bigl(\Phi_*(\bfa)\mid\Phi_*(\bfb)\bigr)_{\go'}
\le 
A(\bfa\mid\bfb)_{\go}+B.
\]
If $\Phi$ is a quasi-isometry, then $\Phi_*$ is surjective and hence is
a homeomorphism.
\end{lemma}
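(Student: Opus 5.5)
The plan is to prove the finite-point estimate first and then pass to the boundary by taking limits. The existence and topological-embedding part is standard once the product estimate is available. The key tools are the stability of quasi-geodesics (Morse lemma) and the standard fact that in a $\delta$-hyperbolic geodesic space the Gromov product $(x\mid y)_{\go}$ equals, up to an additive constant depending only on $\delta$, the distance from $\go$ to a geodesic $[x,y]$.

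\emph{Step 1 (finite points).} Let $x,y\in X$ and choose a geodesic $[x,y]$. By Lemma~\ref{Lem:QI-images-of-quasigeodesics} and the continuous replacement discussed after it, $\Phi([x,y])$ is a uniform quasi-geodesic in $Y$. By the Morse lemma it lies within Hausdorff distance $M=M(\kk,\sK,\delta_Y)$ of a geodesic $[\Phi(x),\Phi(y)]$. Hence
\[
d_Y\bigl(\go',[\Phi(x),\Phi(y)]\bigr)=d_Y\bigl(\Phi(\go),\Phi([x,y])\bigr)\pm M .
\]
Since $\Phi$ is a $(\kk,\sK)$-quasi-isometric embedding, the right-hand side lies between $\kk^{-1}d_X(\go,[x,y])-\sK-M$ and $\kk\, d_X(\go,[x,y])+\sK+M$. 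For the lower bound we use that the nearest point of $\Phi([x,y])$ to $\go'$ is the image of some point $p\in[x,y]$. Applying the comparison $(x\mid y)_{\go}\approx d(\go,[x,y])$ on both sides then gives
\[
\frac1A (x\mid y)_{\go}-B\le(\Phi(x)\mid\Phi(y))_{\go'}\le A(x\mid y)_{\go}+B,
\]
with $A=\kk$ and $B$ depending only on $\kk,\sK,\delta_X,\delta_Y$. I do not expect this step to cause difficulty.

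\emph{Step 2 (boundary map).} If $x_i\to\bfa$, then $(x_i\mid x_j)_{\go}\to\infty$. By Step 1, $(\Phi(x_i)\mid\Phi(x_j))_{\go'}\to\infty$, so $\Phi(x_i)$ converges to a point of $\vis Y$. Interleaving two sequences converging to $\bfa$ shows this limit is independent of the choice of sequence, and we define it to be $\Phi_*(\bfa)$. Equivalently, $\Phi_*(\bfa)$ is the class of the quasi-geodesic ray $\Phi\circ\alpha$ for any geodesic ray $\alpha$ representing $\bfa$.

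\emph{Step 3 (boundary estimate).} Take geodesic rays $\alpha_0,\beta_0$ representing $\bfa,\bfb$, and apply Step 1 to the points $\alpha_0(n),\beta_0(m)$. Taking $\liminf_{n,m\to\infty}$ and using \eqref{eq:BH_upper_lower_bound_P} in both $X$ and $Y$ transfers the inequality to boundary products, at the cost of enlarging $B$ by $2\delta_X+2\delta_Y$ times $A$. In $Y$ we use that $\Phi(\alpha_0(n))\to\Phi_*(\bfa)$, together with the sup/liminf definition: every pair of sequences gives a value within $2\delta_Y$ of the product. The lower bound in Step 3 shows that distinct points have finite product, hence distinct images, so $\Phi_*$ is injective.

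\emph{Step 4 (continuity and surjectivity).} Continuity of $\Phi_*$ follows from the product bounds, since neighborhoods in the boundary are given by large Gromov product. Because $\vis X$ is compact and $\vis Y$ is Hausdorff, the continuous injection $\Phi_*$ is a topological embedding. If $\Phi$ is a quasi-isometry, take a quasi-inverse $\bar\Phi$. Then $\bar\Phi\circ\Phi$ is at bounded distance from the identity, so $(\bar\Phi\circ\Phi)_*=\id$ and likewise $(\Phi\circ\bar\Phi)_*=\id$. Hence $\Phi_*$ is a bijection, and therefore a homeomorphism.

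The main obstacle is the bookkeeping in Step 3: passing the liminf through the inequality, where the two sup/liminf definitions interact. I would handle this by always working with the specific sequences $\alpha_0(n),\beta_0(m)$ and their images, and absorbing all discrepancies into $B$.
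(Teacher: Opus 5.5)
Your proposal is correct and follows the route the paper has in mind. The paper omits the proof, citing V\"ais\"al\"a for the finite-point estimate, which is proved as in your Step~1 (compare the Gromov product with the distance to a geodesic, then apply stability of quasi-geodesics), and then passing to sequences converging to boundary points as in your Steps~2--3. One small correction: injectivity of $\Phi_*$ comes from the \emph{upper} bound $(\Phi_*(\bfa)\mid\Phi_*(\bfb))_{\go'}\le A(\bfa\mid\bfb)_{\go}+B<\infty$, not from the lower bound.
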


\subsection{Sublinear Morse boundaries}\label{ss:pkabdy}

We recall the definition of the sublinear Morse boundary defined in \cite{QRT1} and \cite{QRT2}. Throughout this
subsection, $X$ is a $\delta$-hyperbolic proper geodesic metric space, $\kappa$ denotes a fixed sublinear function and we fix a basepoint $\go \in X$.

\begin{definition}[Sublinear function]\label{def:kappa}
A function $\kappa\from [0,\infty)\to [1,\infty)$ is called \emph{sublinear} if
it is increasing, concave, and
\[
\lim_{r\to\infty}\frac{\kappa(r)}{r}=0.
\]
\end{definition}
The following elementary consequence of concavity will be used repeatedly.
\begin{lemma}
\label{Lem:kappa_basic}
Let $\kappa$ be a sublinear function. Then:
\begin{enumerate}
    \item For every $\lambda\ge  1$ and $t\ge  0$,
    $\kappa(\lambda t)\le  \lambda \kappa(t)$.
    \item For all $s,t\ge  0$, $\kappa(s+t)\le  \kappa(s)+\kappa(t)$.
\end{enumerate}
\end{lemma}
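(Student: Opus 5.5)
Both parts rest on the concavity of $\kappa$ together with $\kappa\ge 1>0$ on $[0,\infty)$; monotonicity is not even needed. The basic tool is the chord inequality for concave functions: if $0\le x\le y$ and $x=\mu y+(1-\mu)\cdot 0$ with $\mu=x/y\in[0,1]$ (for $y>0$), then
\[
\kappa(x)\ge \mu\kappa(y)+(1-\mu)\kappa(0)\ge \mu\kappa(y),
\]
using $\kappa(0)\ge 1\ge 0$. Equivalently, $t\mapsto \kappa(t)/t$ is nonincreasing on $(0,\infty)$.

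For part (1), take $\lambda\ge 1$ and $t\ge 0$. If $t=0$ the claim reads $\kappa(0)\le\lambda\kappa(0)$, which holds because $\kappa(0)\ge 0$ and $\lambda\ge 1$. If $t>0$, apply the chord inequality with $x=t$, $y=\lambda t$ and $\mu=1/\lambda$. This gives $\kappa(t)\ge \kappa(\lambda t)/\lambda$, which is the claim.

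For part (2), take $s,t\ge 0$. If $s+t=0$, then $s=t=0$, and $\kappa(0)\le 2\kappa(0)$ is trivial. Otherwise, apply the chord inequality to $s$ and to $t$, each with $y=s+t$:
\[
\kappa(s)\ge \frac{s}{s+t}\kappa(s+t),\qquad \kappa(t)\ge \frac{t}{s+t}\kappa(s+t).
\]
Adding these two inequalities gives $\kappa(s)+\kappa(t)\ge\kappa(s+t)$.

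There is no real obstacle here. The only points needing care are the degenerate case $t=0$ (or $s+t=0$) and the use of $\kappa(0)\ge 0$ in the chord inequality. That is exactly where the paper's normalization $\kappa\colon[0,\infty)\to[1,\infty)$ from Definition~\ref{def:kappa} enters.
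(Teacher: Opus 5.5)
Your proof is correct and matches the paper's: both apply concavity on the segment from $0$ to $u$ (with $u=\lambda t$, weight $1/\lambda$ for part (1), and $u=s+t$, weights $s/(s+t)$ and $t/(s+t)$ for part (2)), then use $\kappa(0)\ge 0$ to discard the $\kappa(0)$ terms. The differences are cosmetic: you drop $\kappa(0)$ at once rather than after adding, and you treat the degenerate case $t=0$ explicitly.
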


\begin{proof}
    We use concavity in the form $\kappa(\theta u+(1-\theta)0)
    \ge 
    \theta\kappa(u)+(1-\theta)\kappa(0)$ for $u\ge  0$ and $\theta\in[0,1]$.
    
    For the first claim, take $u=\lambda t$ and $\theta=1/\lambda$. For the second claim, the case $s=t=0$ is immediate. Otherwise, let $u=s+t$. Applying the same concavity inequality with $\theta=s/(s+t)$ and then
    with $\theta=t/(s+t)$ gives
    $$
    \kappa(s)
    \ge 
    \frac{s}{s+t}\kappa(s+t)
    +
    \frac{t}{s+t}\kappa(0), \,\,\text{ and }\,\, \kappa(t)
    \ge 
    \frac{t}{s+t}\kappa(s+t)
    +
    \frac{s}{s+t}\kappa(0).
    $$
    Adding the two inequalities yields the claim.
\end{proof}

We write $\|x\|:=d_X(\go,x)$ for $x\in X$. For a quasi-geodesic ray $\beta$, we denote $\|\beta(t)\|:=d_X(\go,\beta(t))$. For a sublinear function $\kappa$, we also use the shorthand
\[
    \kappa(\beta(t)):=\kappa(\|\beta(t)\|).
\]
For a continuous quasi-geodesic ray $\beta\from [0,\infty)\to X$ and for $r>0$, let
\[
t_r=t_r(\beta):=\min\{t\ge  0:\|\beta(t)\|=r\}.
\]
For a geodesic ray parametrized by arc length, one has $t_r=r$. We define
\[
\beta|_r:=\beta([0,t_r]).
\]

\begin{definition}[{$\kappa$-neighborhood}]  \label{Def:Neighborhood} 
For a closed set $Z$ and a constant $\nn$ define the $(\kappa, \nn)$-neighbourhood 
of $Z$ to be 
\[
\calN_\kappa(Z, \nn) = \Big\{ x \in X \ST 
  d_X(x, Z) \le   \nn \cdot \kappa(x)  \Big\}.
\]
Equivalently, a subset $A\subseteq X$ is contained in an $(\nn\kappa)$-neighborhood of $Z$ if $d_X(x,Z)\le  \nn\kappa(\|x\|)$ for every $x\in A$.
\end{definition}

\begin{definition}[$\kappa$-Morse subsets]
\label{Def:Morse}
    Let $Z\subseteq X$ be a closed subset, and let $\kappa$ be a concave sublinear
    function. We say that $Z$ is \emph{$\kappa$-Morse} if there exists a proper
    function
    $
    m_Z\from [1,\infty)\times [0,\infty)\to [0,\infty)
    $
    such that the following holds.
    
    For every $(\qq,\sQ)$-quasi-geodesic segment
    $\beta\from [s,t]\to X$ with endpoints on $Z$, we have
    $$
    \beta([s,t])
    \subseteq
    \calN_{\kappa}\bigl(Z,m_Z(\qq,\sQ)\bigr).
    $$
    The function $m_Z$ is called a \emph{$\kappa$-Morse gauge} for $Z$.
\end{definition}

This is equivalent to the original formulation of $\kappa$-Morseness using
sublinear tracking of quasi-geodesic rays; see
\cite[Proposition~3.10]{QRT2}. A geodesic ray $\alpha\from [0,\infty)\to X$ is called \emph{$\kappa$-Morse} if its image is a $\kappa$-Morse subset of $X$.

\begin{lemma}\label{Lem:uniform-kappa-morse}
    Every geodesic ray
    in $X$ is $\kappa$-Morse. Moreover, the $\kappa$-Morse gauge can be chosen
    uniformly: for every $\qq\ge  1$ and $\sQ\ge  0$, there exists a constant
    $M=M(\delta,\qq,\sQ)$ such that if $\alpha$ is any geodesic ray and
    $\beta$ is a $(\qq,\sQ)$-quasi-geodesic segment with endpoints on $\alpha$,
    then
    $$
    \beta\subseteq N_M(\alpha)\subseteq \calN_{\kappa}(\alpha,M),
    $$ where $N_M(\alpha)$ is the ordinary $M$-neighborhood of $\alpha$.
\end{lemma}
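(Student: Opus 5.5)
The plan is to deduce the statement from the stability of quasi-geodesics in hyperbolic spaces (the Morse lemma), taking care that the constant does not depend on $\alpha$. The second inclusion $N_M(\alpha)\subseteq\calN_{\kappa}(\alpha,M)$ is then immediate: $\kappa$ takes values in $[1,\infty)$, so $M\le M\kappa(\|x\|)$ for every $x\in X$. Hence $d(x,\alpha)\le M$ gives $d(x,\alpha)\le M\kappa(x)$, which is exactly the defining condition of Definition~\ref{Def:Neighborhood}.

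For the first inclusion, let $\beta\colon[s,t]\to X$ be a $(\qq,\sQ)$-quasi-geodesic segment with endpoints $\alpha(a)$ and $\alpha(b)$. The subsegment $\alpha|_{[a,b]}$ is a geodesic segment joining the same two endpoints. By the Morse lemma in $\delta$-hyperbolic geodesic spaces \cite[Theorem~III.H.1.7]{book:BridsonHaefliger}, there is a constant $R=R(\delta,\qq,\sQ)$ such that the Hausdorff distance between $\beta$ and any geodesic with the same endpoints is at most $R$. Because $\beta$ is continuous by Definition~\ref{Def:Quasi-Geodesic}, the cited theorem applies directly; if one prefers to allow tame replacements, \cite[Lemma~III.1.11]{book:BridsonHaefliger} changes the constants only by amounts depending on $\qq$ and $\sQ$. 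Taking $M:=R$, we get $\beta\subseteq N_M(\alpha|_{[a,b]})\subseteq N_M(\alpha)$.

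For every geodesic ray to be $\kappa$-Morse we also need a proper gauge as in Definition~\ref{Def:Morse}. Set $m_\alpha(\qq,\sQ):=M(\delta,\qq,\sQ)$. If this function fails to be proper, replace it by $\max\{M(\delta,\qq,\sQ),\qq+\sQ\}$; the inclusion survives because enlarging the neighborhood constant only enlarges the neighborhood. This gauge does not depend on $\alpha$, which gives the claimed uniformity.

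The main point to verify is that the Morse constant depends only on $\delta,\qq,\sQ$ and not on the length of the segment or on where it sits along $\alpha$. This is precisely the content of the standard statement, so no real obstacle arises. The lemma is essentially a citation, together with the observation $\kappa\ge1$.
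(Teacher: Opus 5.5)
Your proposal is correct and follows essentially the same approach as the paper: $\beta\subseteq N_M(\alpha)$ comes from the Morse lemma for quasi-geodesics in hyperbolic spaces, and $N_M(\alpha)\subseteq\calN_{\kappa}(\alpha,M)$ follows from $\kappa\ge 1$. The paper's proof does not address properness of the gauge explicitly, but your step of taking a maximum with $\qq+\sQ$ matches the normalization $\max\{\qq,\sQ,\dots\}$ that the paper builds into its uniform gauge $\sM$ in Remark~\ref{Rem:uniform-morse-gauge}.
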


\begin{proof}
    This is the Morse lemma for quasi-geodesics in hyperbolic spaces; see
    \cite[Theorem~III.H.1.7]{book:BridsonHaefliger}. Since $\kappa\ge  1$, the
    ordinary $M$-neighborhood $N_M(\alpha)$ is contained in the
    $M\kappa$-neighborhood $\calN_{\kappa}(\alpha,M)$.
\end{proof}

\begin{remark}[Uniform Morse gauge]
\label{Rem:uniform-morse-gauge}
    By the Morse lemma for quasi-geodesics in hyperbolic spaces
    \cite[Theorem~III.H.1.7]{book:BridsonHaefliger}, there is a constant
    $H=H(\delta,\qq,\sQ)$ such that every $(\qq,\sQ)$-quasi-geodesic segment with
    endpoints on a geodesic $\alpha$ lies in the ordinary
    $H$-neighborhood of $\alpha$. In particular, we may fix the uniform gauge 
    \begin{equation}\label{eq:uniform_gauge}  
    \sM(\qq,\sQ):=\max\{\qq,\sQ,H(\delta,3\qq,3\sQ), 3H(\delta,\qq,\sQ)+30\delta\}
    \end{equation}
    where the choice of $\qq,\sQ$ is due to standard normalization in \cite{QRT2},  $H(\delta,3\qq,3\sQ)$ provides the strong $\kappa$-Morse gauge via the weak-to-strong equivalence as in \cite[Proposition~3.10]{QRT2}, and $3H(\delta,\qq,\sQ)+30\delta$ provides the estimates needed later.
    
    Throughout the paper, every geodesic ray used as the center of a boundary neighborhood is equipped with this common strong gauge $\sM$. In particular, this is the gauge used in the definitions of $\calU_\kappa$ and of the fellow-traveling radius $r$.
\end{remark}

\begin{remark}
    We caution the reader that the terms \emph{sublinear Morse gauge} and \emph{metric conformal gauge} refer to distinct notions. For the definition of conformal gauge, see Definition~\ref{def:conf_gauge}.
\end{remark}

\begin{definition}[$\kappa$-Morse boundary]
\label{Def:kappa-boundary}
Let $\mathcal{M}^{\kappa}_\go(X)$ be the collection of $\kappa$-Morse quasigeodesic rays based at $\go$. Two rays $\alpha,\beta\in\mathcal{M}^{\kappa}_\go(X)$ are equivalent (denoted $\alpha\sim\beta$) if they $\kappa$-fellow travel, that is, 
   if \[ \lim_{r\to\infty} \frac{d_X(\alpha_r,\beta_r)}{r} =0. \] 
The \emph{$\kappa$-Morse boundary}, is
$$
\pka X:=\mathcal{M}^{\kappa}_\go(X)/{\sim}.
$$
\end{definition}

\begin{definition}[Full sublinear Morse boundary]
If $\kappa\le  C\kappa'$ for some $C\ge  1$, there is a natural
inclusion $
\partial_\kappa X\longrightarrow\partial_{\kappa'}X$.
The family is directed, since $\kappa+\kappa'$ dominates both $\kappa$
and $\kappa'$. We define the \emph{full sublinear Morse boundary} by
\[
\psl X:=\bigcup_{\kappa}\partial_\kappa X,
\]
equipped with the topology of \cite{QRT2}, for which every
$\partial_\kappa X$ is a topological subspace.
\end{definition}

\begin{lemma}{\cite[Lemma~4.2]{QRT2}}\label{lem:qrt242}
Let $\bfb \in \pka X$. Then there exists a (not necessarily unique) geodesic ray in the class $\bfb$.
\end{lemma}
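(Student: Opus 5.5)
The plan is a limiting argument: approximate the given ray by geodesic segments from $\go$, extract a limiting geodesic ray by properness, and then check that this ray lies in the class $\bfb$. Fix a representative $\beta\in\mathcal{M}^\kappa_\go(X)$ of $\bfb$. It is a $(\qq,\sQ)$-quasi-geodesic ray with $\beta(0)=\go$ and $\kappa$-Morse gauge $m_\beta$. For each $n\in\mathbb N$ choose a geodesic segment $\gamma_n=[\go,\beta(n)]$. A geodesic segment is a $(1,0)$-quasi-geodesic with endpoints on $\beta$, so Definition~\ref{Def:Morse} gives
\[
\gamma_n\subseteq \calN_\kappa\bigl(\beta,m_\beta(1,0)\bigr)\qquad\text{for every } n.
\]
In the hyperbolic setting one can instead use Lemma~\ref{Lem:uniform-kappa-morse} or Remark~\ref{Rem:uniform-morse-gauge}: after passing to a nearby geodesic ray, $\gamma_n$ even lies in a uniform ordinary neighbourhood of $\beta$. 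I will write the argument with the $\kappa$-neighbourhood, so that it also matches the general statement of \cite{QRT2}.

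Next I extract a limit. The maps $\gamma_n$ are $1$-Lipschitz, start at $\go$, and have lengths $d(\go,\beta(n))\to\infty$. Since $X$ is proper, Arzel\`a--Ascoli and a diagonal argument give a subsequence converging uniformly on compact sets to a geodesic ray $\alpha\colon[0,\infty)\to X$ with $\alpha(0)=\go$. Fix $t$. Each $\gamma_n(t)$ satisfies
\[
d(\gamma_n(t),\beta)\le m\,\kappa(t),\qquad m:=m_\beta(1,0).
\]
The relevant points of $\beta$ lie in a compact set, so passing to the limit gives
\[
d(\alpha(t),\beta)\le m\,\kappa(t)\qquad\text{for all } t\ge 0.
\]
Finally, $\alpha$ is $\kappa$-Morse: in a hyperbolic space this is Lemma~\ref{Lem:uniform-kappa-morse}. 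In general one would use that a subset lying in a sublinear neighbourhood of a $\kappa$-Morse set is itself $\kappa$-Morse, as in \cite{QRT2}.

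It remains to show that $\alpha\sim\beta$, that is, $d(\alpha_r,\beta_r)/r\to0$, where $\alpha_r=\alpha(r)$ and $\beta_r=\beta(t_r)$. This is the step I expect to be the main obstacle, because the tracking estimate pairs $\alpha(r)$ with some point $\beta(s)$ that need not be the point $\beta_r$ where $\beta$ first reaches norm $r$. The plan for this step is as follows.
\begin{enumerate}
\item Choose $s$ with $d(\alpha(r),\beta(s))\le m\kappa(r)$. Then $\bigl|\,\|\beta(s)\|-r\,\bigr|\le m\kappa(r)$.
\item The points $\beta(s)$ and $\beta(t_r)$ have norms differing by at most $m\kappa(r)$. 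To bound the distance between them along $\beta$, consider the subpath $\beta|_{[\min(s,t_r),\,\max(s,t_r)]}$, which is a quasi-geodesic with both endpoints on $\beta$ at norm roughly $r$.
\item Compare this subpath with a geodesic from $\go$. If it were long (length $L\gg\kappa(r)$), then the quasi-geodesic $\go\to\beta(\min)\to$ (the subpath) would backtrack while staying outside the $m\kappa$-neighbourhood of the geodesic $[\go,\beta(\max)]$. This contradicts the $\kappa$-Morse property of $\beta$, applied through the reverse inclusion: $\beta|_{\max}$ lies within $O(\kappa)$ of $[\go,\beta(\max)]$.
\item Conclude that $d(\beta(s),\beta_r)\le C'\kappa(r)$ for a constant $C'$ depending on $\qq,\sQ,m$.
\end{enumerate}
Combining the two bounds gives
\[
d(\alpha_r,\beta_r)\le (m+C')\,\kappa(r)+C'' = o(r).
\]
Hence $\alpha$ is a geodesic ray in the class $\bfb$.

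In the hyperbolic case the backtracking step can be shortcut. By the standard Morse lemma \cite[Theorem~III.H.1.7]{book:BridsonHaefliger}, $\beta$ and $\alpha$ are at finite Hausdorff distance $H$. Then the triangle inequality together with the quasi-geodesic inequality bounds $d(\beta(s),\beta(t_r))$ by a constant depending only on $\qq,\sQ,H$. So $d(\alpha_r,\beta_r)$ is uniformly bounded, which is more than enough.
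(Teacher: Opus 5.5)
The paper gives no proof of this lemma; it cites \cite[Lemma~4.2]{QRT2}. Your argument is the standard one: take an Arzel\`a--Ascoli limit $\alpha$ of the segments $[\go,\beta(n)]$, pass the Morse control to the limit, and then check that $\alpha\sim\beta$. Under the paper's standing hyperbolicity assumption it is correct, but two points need tightening.

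In the hyperbolic shortcut, \cite[Theorem~III.H.1.7]{book:BridsonHaefliger} concerns segments. So the bound $d_{\mathrm{Haus}}(\alpha,\beta)\le H$ for your particular limit ray must be derived by applying it to $\gamma_n$ and $\beta([0,n])$ and passing to the limit in both directions. For the inclusion $\beta\subseteq N_H(\alpha)$, note that the parameters $u_n$ with $d(\beta(\tau),\gamma_n(u_n))\le H$ satisfy $u_n\le\|\beta(\tau)\|+H$, so a subsequence converges.

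The final estimate then comes from this reverse direction of the Hausdorff bound, not from the quasi-geodesic inequality. The point $\beta(t_r)$ lies within $H$ of some $\alpha(u)$, and $|u-r|\le H$ because $\|\alpha(u)\|=u$ and $\|\beta(t_r)\|=r$. Hence $d(\alpha_r,\beta_r)\le 2H$ directly. Closeness of norms together with the quasi-geodesic inequality only bounds $d(\beta(s),\beta(t_r))$ linearly in $r$.

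Your non-hyperbolic steps (2)--(3) are heuristic as written. Definition~\ref{Def:Morse} puts geodesics with endpoints on $\beta$ near $\beta$, not $\beta$ near those geodesics. So the ``reverse inclusion'' $\beta([0,\max])\subseteq N_{C\kappa(r)}([\go,\beta(\max)])$ needs its own proof. For example, take points of $[\go,\beta(\max)]$ at distance at most $1$ apart: their nearest-point parameters on $\beta$ differ by $O(\kappa(r))$, hence are $O(\kappa(r))$-dense in $[0,\max]$. Once you have this, $\beta(s)$ and $\beta_r$ both lie within $O(\kappa(r))$ of points of that geodesic at distance $r\pm O(\kappa(r))$ from $\go$, and no backtracking argument is needed.

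Also, the claim that any subset lying in a sublinear neighbourhood of a $\kappa$-Morse set is $\kappa$-Morse is false in that generality. For instance, the set $\{\beta(2^k)\}_{k\ge 0}$ on a geodesic ray $\beta$ in a tree is not $\kappa$-Morse. You only need the statement for quasi-geodesic rays.

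Neither issue affects the hyperbolic case, where Lemma~\ref{Lem:uniform-kappa-morse} and your shortcut suffice.
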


\subsubsection{Neighborhoods and topology}

Let $\alpha_0$ be a $\kappa$-Morse geodesic ray based at $\go$ with a
 fixed $\kappa$-Morse gauge $m_{\alpha_0}$ and let $\rr>0$. We define the neighborhood $\calU_\kappa(\alpha_0,\rr)$ to be the set of all classes $\bfb\in\pka X$ with the following property:
    \begin{center}
    for every $(\qq,\sQ)$-quasi-geodesic ray $\beta\in\bfb$, if
    $
    m_{\alpha_0}(\qq,\sQ)<\tfrac{\rr}{24\kappa(\rr)},$
    then
    \[
    \beta|_{\rr}
    \subseteq
    \calN_{\kappa}\bigl(\alpha_0,m_{\alpha_0}(\qq,\sQ)\bigr).
    \]
\end{center}

We say a quantity $\sD$ \emph{is small compared to a radius $\rr>0$} if $\sD \le  \tfrac{\rr}{24\kappa(\rr)}$. In original topology of \cite{QRT2}, the cut-off $r/2\kappa(r)$ is used instead. However, the choice of $24$ does not change the underlying topology of $\pka X$.

Let $\bfa\in\pka X$. By Lemma~\ref{lem:qrt242}, the class $\bfa$ contains a
geodesic ray based at $\go$. Fix one such geodesic representative and denote it
by $\alpha_0\in\bfa$. We define $\calU_\kappa(\bfa,\rr):=\calU_\kappa(\alpha_0,\rr)$. The topology on $\pka X$ is generated by the sets $\calU_\kappa(\bfa,\rr)$
for $\bfa\in\pka X,\ \rr>0$.

For more details, see \cite[\S~4]{QRT2}.

\subsubsection{Fellow traveling radius}
Next we recall the $\kappa$-fellow traveling radius from \cite{gjq}. 
Choose once and for all a based geodesic representative $\alpha_0\in \bfa\in \pka X$.

\begin{definition}[$\kappa$-fellow traveling radius]
\label{Def:kappa-fellow-traveling-radius}
Let $\bfa,\bfb\in\pka X$. Let $\alpha_0\in\bfa$ and $\beta_0\in\bfb$ be
geodesic representatives based at $\go$. Define
\begin{align*}
    R_{\beta_0}(\bfa)
    &:=
    \sup\{r>0:\bfa\in \calU_\kappa(\beta_0,r)\},
\text{ and }\\  
R_{\alpha_0}(\bfb)
    &:=
    \sup\{r>0:\bfb\in \calU_\kappa(\alpha_0,r)\}.
\end{align*}
We use the convention that the supremum of the empty set is $0$. For $\bfa\neq\bfb$, define
\begin{equation}\label{eq:def_r}  
r(\bfa,\bfb)
:=
\min\{  R_{\beta_0}(\bfa),R_{\alpha_0}(\bfb)\}.
\end{equation}

For $\bfa=\bfb$, define $r(\bfa,\bfa):=\infty$.
\end{definition}


\begin{convention}
If there exists $C\ge  1$ such that $C^{-1}A\le  B\le  CA$ then we write $A\asymp B$.
\end{convention}

The quantity $r(\bfa,\bfb)$ is the sublinear Morse analogue of the Gromov
product $(\bfa\mid\bfb)_{\go}$. Just as $(\bfa\mid\bfb)_{\go}$ coarsely measures
how long geodesic representatives of $\bfa$ and $\bfb$ fellow travel,
$r(\bfa,\bfb)$ measures how long the classes $\bfa$ and $\bfb$
$\kappa$-fellow travel. Thus large values of $r(\bfa,\bfb)$ mean that
$\bfa$ and $\bfb$ are close in the $\kappa$-Morse boundary topology.

In Section~\ref{S:relationship}, we make this analogy precise in
the hyperbolic setting by proving that, under the natural identification
$\vis X\cong \pka X$,
\[
1+ r(\bfa,\bfb)\asymp 1+(\bfa\mid\bfb)_{\go}.
\]

\subsection{Identifying visual and sublinear Morse boundaries}

We recall the standard identification between the visual boundary and the
sublinear Morse boundary in the hyperbolic setting.

\begin{proposition}
\label{prop:visual-kappa-homeomorphism}
Let $X$ be a proper geodesic $\delta$-hyperbolic metric space with basepoint
$\go$, and let $\kappa \ge 1$ be a sublinear function. Then the natural map
\[
\iota\from \vis X\longrightarrow \pka X
\]
defined by $\iota(\bfa)=[\alpha]_{\kappa}$, where $\alpha$ is any geodesic ray based at $\go$ representing $\bfa$, is a
well-defined homeomorphism.
\end{proposition}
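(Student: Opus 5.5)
The plan is to check four things in order: that $\iota$ is well defined, that it is surjective, that it is injective, and that it is a homeomorphism. The first three are coarse arguments; the last is a topological comparison of neighborhood bases.

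\emph{Well-definedness.} By Lemma~\ref{Lem:uniform-kappa-morse}, every geodesic ray is $\kappa$-Morse, so $[\alpha]_\kappa$ makes sense. If $\alpha,\alpha'$ are geodesic rays from $\go$ representing the same $\bfa\in\vis X$, thinness of asymptotic rays gives $d(\alpha(t),\alpha'(t))\le 8\delta$ for all $t$. Hence $d(\alpha_r,\alpha'_r)/r\to 0$, and $\alpha\sim\alpha'$ in the sense of Definition~\ref{Def:kappa-boundary}.

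\emph{Surjectivity.} This is immediate from Lemma~\ref{lem:qrt242}: every class in $\pka X$ contains a geodesic ray based at $\go$, and that ray represents a point of $\vis X$.

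\emph{Injectivity.} Suppose geodesic rays $\alpha,\beta$ from $\go$ satisfy $d(\alpha_r,\beta_r)=o(r)$. I claim they are at finite Hausdorff distance. Let $T=(\alpha(\infty)\mid\beta(\infty))_\go$. If $T<\infty$, then by \eqref{eq:BH_upper_lower_bound_P} and the tree approximation of the triangle $\go,\alpha(r),\beta(r)$, we get $d(\alpha(r),\beta(r))\ge 2r-2T-O(\delta)$ for large $r$. That growth is linear, contradicting sublinearity. So $T=\infty$, and the rays define the same visual point.

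\emph{Homeomorphism.} Both spaces are Hausdorff, $\vis X$ is compact, and $\iota$ is a bijection, so it suffices to prove continuity in both directions. I would do this by comparing neighborhood bases directly, since that also sets up the comparison of $r$ with the Gromov product needed later. The visual topology at $\bfa$ has the basis $V(\bfa,R)=\{\bfb:(\bfa\mid\bfb)_\go>R\}$. The main steps are two inclusions.

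\textbf{Step (i): for every $\rr$ there is $R$ with $V(\bfa,R)\subseteq\calU_\kappa(\alpha_0,\rr)$.} Take $R=\rr+C$ with $C=C(\delta)$. Let $\beta$ be a $(\qq,\sQ)$-quasi-geodesic ray in $\bfb$ with $\sM(\qq,\sQ)<\rr/(24\kappa(\rr))$. By the Morse lemma, $\beta$ lies within $H(\delta,\qq,\sQ)$ of a geodesic ray $\beta_0\in\bfb$. That ray $\beta_0$ stays within about $4\delta$ of $\alpha_0$ up to distance roughly $(\bfa\mid\bfb)_\go-2\delta$ from $\go$. Therefore $\beta|_\rr$ lies in the ordinary $(H+4\delta)$-neighborhood of $\alpha_0$, up to a bounded overshoot near the endpoint. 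This neighborhood is contained in $\calN_\kappa(\alpha_0,\sM(\qq,\sQ))$ because $\sM\ge 3H+30\delta$ by \eqref{eq:uniform_gauge} and $\kappa\ge1$.

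\textbf{Step (ii): for every $R$ there is $\rr$ with $\calU_\kappa(\alpha_0,\rr)\subseteq V(\bfa,R)$.} Test membership with the geodesic representative $\beta_0$ itself, which is a $(1,0)$-quasi-geodesic. Its gauge $\sM(1,0)$ is a fixed constant, so it is small compared to $\rr$ once $\rr$ is large, by sublinearity of $\kappa$. Then $\beta_0|_\rr\subseteq\calN_\kappa(\alpha_0,\sM(1,0))$, so $d(\beta_0(\rr),\alpha_0)\le \sM(1,0)\kappa(\rr)$. The triangle inequality then gives $(\alpha_0(\rr)\mid\beta_0(\rr))_\go\ge \rr-\sM(1,0)\kappa(\rr)-O(1)$. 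This tends to infinity, so by \eqref{eq:BH_upper_lower_bound_P} and $\delta$-hyperbolicity, $(\bfa\mid\bfb)_\go\ge \rr-O(\kappa(\rr))>R$ for $\rr$ large.

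\emph{Main obstacle.} I expect the hardest part to be Step (i). The definition of $\calU_\kappa$ quantifies over \emph{all} quasi-geodesic representatives whose gauge is small relative to $\rr$, and these can have constants that grow with $\rr$. The point to handle is that the Morse constant $H(\delta,\qq,\sQ)$ is absorbed by the gauge $\sM(\qq,\sQ)$ itself, which is exactly why $3H+30\delta$ was built into \eqref{eq:uniform_gauge}. One also has to handle the overshoot where the quasi-geodesic first reaches the sphere of radius $\rr$ but may go slightly beyond the fellow-travel region. I would deal with this by enlarging $R$ by an amount linear in $\sM(\qq,\sQ)$, which is at most $\rr/(24\kappa(\rr))$. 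This means taking $R=2\rr$, for instance, which still defines a valid visual neighborhood.
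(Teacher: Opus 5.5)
Your proposal is correct and follows essentially the same route as the paper's appendix proof: well-definedness by bounded fellow-traveling of asymptotic rays, injectivity via the linear divergence $d(\alpha_0(t),\beta_0(t))\ge 2t-2(\bfa\mid\bfb)_\go-4\delta$, surjectivity from Lemma~\ref{lem:qrt242}, and two inclusions of neighborhood bases, where your Step (ii) is exactly the paper's argument through Lemma~\ref{lem:gp_basic_2}. The only difference is in Step (i): the paper takes $R=\rr$ directly and absorbs the overshoot, because $t\le \rr+H<(\bfa\mid\bfb)_\go+H$ together with Lemma~\ref{lem:GP_basic} gives $d(\beta_0(t),\alpha_0)\le H+10\delta$, so your enlargement to $R=2\rr$ also works but is not needed.
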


The proof is standard but we present its direct topological proof for completeness in Appendix~\ref{app:homeo}.
\begin{corollary}
\label{cor:hyperbolic-boundary-stabilizes}
Let $X$ be a proper geodesic Gromov hyperbolic space. For every
sublinear function $\kappa\ge  1$, the natural identifications give
\[
\partial_{\mathbf 1}X
=
\partial_\kappa X
=
\psl X
=
\partial_\infty X
\]
canonically as topological spaces, where $\mathbf 1(t)\equiv 1$.
\end{corollary}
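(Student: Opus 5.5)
The plan is to deduce Corollary~\ref{cor:hyperbolic-boundary-stabilizes} from Proposition~\ref{prop:visual-kappa-homeomorphism}, applied to every sublinear function simultaneously, together with the compatibility of the natural inclusions $\partial_\kappa X\to\partial_{\kappa'}X$.

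First, note that $\mathbf 1$ is itself a sublinear function in the sense of Definition~\ref{def:kappa}. It is constant, hence increasing in the weak sense and concave, it satisfies $\mathbf 1\ge 1$, and $1/r\to0$. So Proposition~\ref{prop:visual-kappa-homeomorphism} gives homeomorphisms $\iota_\kappa\colon\vis X\to\pka X$ for every sublinear $\kappa$, including $\kappa=\mathbf 1$. Each $\iota_\kappa$ sends a class $\bfa$ to the class of any geodesic ray representing it.

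Second, I would check that these identifications commute with the natural inclusions. Suppose $\kappa\le C\kappa'$, and let $j\colon\pka X\to\partial_{\kappa'}X$ send the $\kappa$-class of a ray to its $\kappa'$-class. For $\bfa\in\vis X$ with geodesic representative $\alpha$, we have
\[
j(\iota_\kappa(\bfa))=[\alpha]_{\kappa'}=\iota_{\kappa'}(\bfa).
\]
Hence $j=\iota_{\kappa'}\circ\iota_\kappa^{-1}$, so $j$ is a homeomorphism, and in particular a bijection. Since every sublinear $\kappa$ satisfies $\mathbf 1\le\kappa$, every boundary in the directed system is identified with $\partial_{\mathbf 1}X$ through maps compatible with the $\iota$'s.

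Third, I would identify the union $\psl X=\bigcup_\kappa\pka X$. Since all inclusion maps are bijections compatible with the $\iota_\kappa$, the union as a set is a single copy of $\vis X$. Topologically, each $\pka X$ is a subspace of $\psl X$ by the definition of the full sublinear Morse boundary, and each $\pka X$ equals the whole of $\psl X$. A subspace that equals the whole space carries the whole topology, so $\psl X\cong\pka X\cong\vis X$ via the $\iota$'s.

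The main obstacle is this last point: confirming that the direct-limit/union topology of \cite{QRT2} really restricts to the topology of each $\pka X$. Here I would simply invoke the stated property from the definition, namely that every $\pka X$ is a topological subspace of $\psl X$. Everything else is bookkeeping of the well-definedness already contained in Proposition~\ref{prop:visual-kappa-homeomorphism}.
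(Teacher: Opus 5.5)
Your proposal is correct and follows essentially the paper's argument: the paper notes that every geodesic ray is uniformly Morse, hence $\kappa$-Morse for every $\kappa\ge 1$ including $\mathbf 1$. It then uses that every class has a geodesic representative (Lemma~\ref{lem:qrt242}) and that the sublinear-tracking equivalence relation does not depend on $\kappa$, so all boundaries share one underlying set, and the topological claim follows from Proposition~\ref{prop:visual-kappa-homeomorphism}. Your check that the natural inclusions equal $\iota_{\kappa'}\circ\iota_\kappa^{-1}$, together with your subspace argument for $\psl X$, just makes explicit the bookkeeping the paper leaves implicit.
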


\begin{proof}
Every geodesic ray in $X$ is uniformly Morse, and hence is
$\mathbf 1$-Morse and $\kappa$-Morse for every $\kappa\ge  1$.
Conversely, every point of $\partial_\kappa X$ has a geodesic
representative by Lemma~\ref{lem:qrt242}. The equivalence relation on
rays is sublinear tracking and is independent of $\kappa$. Therefore
all the boundaries have the same underlying set. The assertion about
their topologies follows from
Proposition~\ref{prop:visual-kappa-homeomorphism}.
\end{proof}

\begin{remark}\label{rem:identification_vis_sublinear}
    In the remainder of the paper, we use the homeomorphism from Proposition~\ref{prop:visual-kappa-homeomorphism} to identify $\vis X$, $\pka X$ for every $\kappa$, and $\psl X$. We denote this common underlying boundary by $\partial X$. Thus, the same symbol $\bfa$ may represent either a visual boundary point or the corresponding sublinear Morse boundary point; the metric under consideration will always be specified explicitly.
\end{remark}

\subsection{Visual metrics and log metrics}
In this subsection, we recall the three metrics on $\bdy X$ - the visual metric, the log metric and the Floyd metric.

\subsubsection{Visual metric on $\bdy X$}
    We first recall the definition of a visual metric on the Gromov boundary. 
    
    \begin{definition}[Visual metric]
    \label{Def:visual-metric}
        Let $\varepsilon>0$. A metric $\dvis$ on $\bdy X$ is called a
        \emph{visual metric with parameter $\varepsilon$} if there exist constants
        $c_{\mathrm{vis}},C_{\mathrm{vis}}>0$ such that for all distinct
        $\bfa,\bfb\in\vis X$,
        \[
        c_{\mathrm{vis}}e^{-\varepsilon(\bfa\mid\bfb)_{\go}}
        \le 
        \dvis(\bfa,\bfb)
        \le 
        C_{\mathrm{vis}}e^{-\varepsilon(\bfa\mid\bfb)_{\go}}.
        \]
    \end{definition}
    
    For every proper geodesic $\delta$-hyperbolic space, visual metrics exist for all sufficiently small $\varepsilon>0$, where the allowable range of $\varepsilon$ depends on $\delta$; see
    \cite[Proposition~III.H.3.21]{book:BridsonHaefliger}. A visual metric induces the usual topology on $\bdy X$.

    \begin{remark}\label{Rem:visual-metric-parameter}
        The parameter $\varepsilon$ is part of the data of a visual metric and must be
        chosen sufficiently small in terms of the hyperbolicity constant. Changing
        $\varepsilon$ changes the metric by a snowflake-type operation. This dependence
        on a visual parameter is one of the main contrasts with the log metric defined
        below.
    \end{remark}

\subsubsection{The Log metric on $\bdy X$}

    When more than one sublinear function is under consideration, we write $r^\kappa_\go$, $\rho^\kappa_{\go,\lbase}$, and 
    $d^{X,\kappa}_{\go,\lbase,\theta}$ to display the dependence on $\kappa$. When $\kappa$ is fixed, we continue to suppress it from the notation. 
    
    Fix a sublinear function $\kappa \ge 1$ as in Section~\ref{ss:pkabdy}. We now recall the logarithmic metric construction from \cite{gjq} on stratification of sublinear Morse boundary of a proper geodesic metric space. In our context, we apply this construction to Gromov hyperbolic spaces.
    
    Let $r_\go(\bfa,\bfb)$ denote the symmetric $\kappa$-fellow-traveling radius from Definition~\ref{Def:kappa-fellow-traveling-radius}. Fix a logarithmic base $\lbase>1$. Define
    $\rho_{\go,\lbase}\from \pka X\times \pka X\to [0,1]$
    by
    \[
        \rho_{\go,\lbase}(\bfa,\bfb)
        :=
        \begin{cases}
        0, & \bfa=\bfb,\\[3pt]
        1, & \bfa\neq\bfb \text{ and } 0\le  r_\go(\bfa,\bfb)<\lbase,\\[3pt]
        \lbase^{-\lfloor \log_\lbase r_\go(\bfa,\bfb)\rfloor+1},
        & r_\go(\bfa,\bfb)\ge  \lbase.
        \end{cases}
    \]
    
    The following results are Proposition~3.9 and Proposition~3.11 from \cite{gjq}.

\begin{proposition}
\label{prop:log-quasimetric-from-gjq}
Let $\lbase>1$. The function $\rho_{\go,\lbase}$ is a quasi-metric on
$\pka X$. More precisely, there exists a constant $K=2\,\lbase\ge  1$ such that
for all $\bfa,\bfb,\bfc\in\pka X$,
\[
\rho_{\go,\lbase}(\bfa,\bfc)
\le 
K\max\{
\rho_{\go,\lbase}(\bfa,\bfb),
\rho_{\go,\lbase}(\bfb,\bfc)
\}.
\]
Moreover,
\[
\rho_{\go,\lbase}(\bfa,\bfb)
\asymp
\frac{1}{1+r_\go(\bfa,\bfb)}
\]
for all $\bfa,\bfb\in\pka X$, with constants depending only on $\lbase$.
\end{proposition}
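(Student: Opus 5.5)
The comparison $\rho_{\go,\lbase}\asymp (1+r_\go)^{-1}$ is pure bookkeeping with the floor function, so I would do it first. The quasi-ultrametric inequality then reduces to a coarse "three-point" inequality for the fellow-traveling radius $r_\go$, which is where the geometry enters.

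\emph{Comparison step.} Write $r=r_\go(\bfa,\bfb)$ and $n=\lfloor\log_\lbase r\rfloor$, so that $\lbase^{n}\le r<\lbase^{n+1}$.
\begin{itemize}
\item If $r\ge\lbase$, then $\rho_{\go,\lbase}(\bfa,\bfb)=\lbase^{-n+1}$ lies between $\lbase/r$ and $\lbase^{2}/r$. Since $r\ge 1$, we also have $\tfrac{1}{2r}\le\tfrac1{1+r}\le\tfrac1r$. Hence $\rho_{\go,\lbase}\asymp(1+r)^{-1}$ with constants $2\lbase^2$.
\item If $0\le r<\lbase$ and $\bfa\ne\bfb$, then $\rho_{\go,\lbase}=1$ and $(1+r)^{-1}\in\bigl((1+\lbase)^{-1},1\bigr]$.
\item The case $\bfa=\bfb$ is trivial by the conventions $r=\infty$ and $\rho=0$.
\end{itemize}
All constants depend only on $\lbase$.

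\emph{Reduction of the quasi-ultrametric inequality.} The key geometric input I would isolate is
\[
r_\go(\bfa,\bfc)\ \ge\ \tfrac12\min\{r_\go(\bfa,\bfb),\,r_\go(\bfb,\bfc)\}
\]
for all $\bfa,\bfb,\bfc$. Granting this, set $m=\min\{r_\go(\bfa,\bfb),r_\go(\bfb,\bfc)\}$; the degenerate cases where two of the points coincide are immediate.
\begin{itemize}
\item If $m/2\ge\lbase$, the comparison step gives $\rho(\bfa,\bfc)\le \lbase^2/(m/2)=2\lbase^2/m$. On the other hand, $\max\{\rho(\bfa,\bfb),\rho(\bfb,\bfc)\}\ge \lbase/m$. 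So the inequality holds with $K=2\lbase$.
\item If $m<2\lbase$, then $\max\{\rho(\bfa,\bfb),\rho(\bfb,\bfc)\}\ge \lbase^{-\lfloor\log_\lbase m\rfloor+1}\ge \lbase^{-1}$ (or equals $1$ when $m<\lbase$). Since $\rho\le1$, the inequality again holds with $K\le 2\lbase$, after a short check of the boundary exponents.
\end{itemize}

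\emph{The geometric step.} To prove the displayed inequality, suppose $\bfb\in\calU_\kappa(\alpha_0,r)$, $\bfc\in\calU_\kappa(\beta_0,r)$, and symmetrically with the roles reversed. Take any $(\qq,\sQ)$-quasi-geodesic ray $\gamma\in\bfc$ with $\sM(\qq,\sQ)$ small compared to $r/2$. I want to show that $\gamma|_{r/2}$ lies in $\calN_\kappa(\alpha_0,\sM(\qq,\sQ))$.
\begin{enumerate}
\item Since the gauge is small compared to $r/2$, it is also small compared to $r$, so $\gamma|_{r}$ lies in the $\sM\kappa$-neighborhood of $\beta_0$.
\item Build from $\beta_0|_r$ a quasi-geodesic ray in $\bfb$ whose constants are controlled by the uniform gauge of Remark~\ref{Rem:uniform-morse-gauge}. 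Since $\bfb\in\calU_\kappa(\alpha_0,r)$, the segment $\beta_0|_r$ lies in the $\sM\kappa$-neighborhood of $\alpha_0$.
\item Concatenate these two neighborhood statements. This costs a factor $2$ in the $\kappa$-neighborhood constant and a factor $2$ in the radius, using Lemma~\ref{Lem:kappa_basic}(1) and concavity.
\item The slack factor $24$ in the definition of "small compared to a radius", together with the $3H+30\delta$ term built into $\sM$, absorbs these losses. This matches the argument of Proposition~3.9 in \cite{gjq}, and I would follow it.
\end{enumerate}
Here hyperbolicity is actually helpful: by Lemma~\ref{Lem:uniform-kappa-morse}, all the neighborhoods can be taken to be uniform ordinary $H$-neighborhoods, so the constants never grow with $r$.

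The main obstacle I expect is this concatenation step. One must verify that the doubled neighborhood constant is still dominated by the fixed gauge $\sM(\qq,\sQ)$ on the shortened radius $r/2$, rather than merely by some larger gauge. Handling this will require carefully using the built-in slack in $\sM$ and in the cutoff $r/(24\kappa(r))$.
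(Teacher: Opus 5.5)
The paper does not prove this proposition; it imports it from \cite{gjq} (Propositions~3.9 and~3.11). Your argument therefore has to stand on its own.

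\textbf{What is correct.} Your comparison step is correct. So is the reduction of the bound with $K=2\lbase$ to the three-point inequality $r_\go(\bfa,\bfc)\ge\tfrac12\min\{r_\go(\bfa,\bfb),r_\go(\bfb,\bfc)\}$, up to two small repairs.
\begin{enumerate}
\item When $\lbase\le m<2\lbase$, the claim $\lbase^{1-\lfloor\log_\lbase m\rfloor}\ge\lbase^{-1}$ fails for $\lbase<\sqrt2$. Use instead $\rho\ge\lbase/m>1/2\ge 1/(2\lbase)$.
\item Positivity of $\rho_{\go,\lbase}$ needs $r_\go(\bfa,\bfb)<\infty$ for $\bfa\neq\bfb$. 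This follows from Lemma~\ref{lem:gp_basic_2}, as in the upper-bound half of Proposition~\ref{prop:relationship_p_r}.
\end{enumerate}

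\textbf{The gap.} The geometric step has a genuine gap: the concatenation you describe cannot close. From $d(z,\beta_0)\le\sM(\qq,\sQ)\kappa(\|z\|)$ and $\beta_0|_{2s}\subseteq\calN_\kappa(\alpha_0,\sM(1,0))$ you only get $d(z,\alpha_0)\le\sM(\qq,\sQ)\kappa(\|z\|)+\sM(1,0)\kappa(\|y\|)$. But $\bfc\in\calU_\kappa(\alpha_0,s)$ requires the single constant $\sM(\qq,\sQ)$. Halving the radius and the factor $24$ only shrink the set of pairs $(\qq,\sQ)$ that must be tested. The neighbourhood constant does not depend on the radius, so nothing absorbs the extra term.

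Nor does hyperbolicity make these neighbourhoods uniform. Lemma~\ref{Lem:uniform-kappa-morse} concerns segments with endpoints on the ray. Membership in $\calU_\kappa$ at radius $r$ gives only $\kappa$-type control, and uniform control is available only below the Gromov product. Your sketch never extracts that information.

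\textbf{A fix.} Pass through the Gromov product. Fix $0<s<m/2$ and a pair with $\sM(\qq,\sQ)<s/(24\kappa(s))$; if no such pair exists, membership at scale $s$ is vacuous.
\begin{enumerate}
\item Three facts combine here: $t/\kappa(t)$ is nondecreasing, $R_{\alpha_0}(\bfb)\ge m>2s$, and $\beta_0$ is itself a $(\qq,\sQ)$-quasi-geodesic. Together they give $d(\beta_0(2s),\alpha_0)\le\sM(\qq,\sQ)\kappa(2s)<s/12$.
\item Lemma~\ref{lem:gp_basic_2} then gives $(\bfa\mid\bfb)_\go>2s-s/12-2\delta$. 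The same bound holds for $(\bfb\mid\bfc)_\go$.
\item Lemma~\ref{lem:GP_basic} yields the boundary inequality $(\bfa\mid\bfc)_\go\ge\min\{(\bfa\mid\bfb)_\go,(\bfb\mid\bfc)_\go\}-6\delta$. Since $\delta\le\sM(\qq,\sQ)/30<s/720$, this gives $(\bfa\mid\bfc)_\go>2s-s/12-8\delta>s$.
\item Let $\gamma_0$ be the geodesic representative of $\bfc$. The lower-bound argument of Proposition~\ref{prop:relationship_p_r} now gives $\bfc\in\calU_\kappa(\alpha_0,s)$ and $\bfa\in\calU_\kappa(\gamma_0,s)$. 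In that argument the distance to the other ray is uniformly at most $2H+10\delta\le\sM(\qq,\sQ)$. This is where the $3H+30\delta$ slack in $\sM$ actually enters, not in absorbing a doubled gauge.
\end{enumerate}
Hence $r_\go(\bfa,\bfc)\ge s$ for every $s<m/2$, which is exactly your three-point inequality. With it, your reduction gives $K=2\lbase$.
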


Since $\rho_{\go,\lbase}$ is only a quasi-metric, one obtains genuine metrics by
snowflaking and applying the standard chain construction for quasi-metric
spaces; see \cite[Proposition~14.5]{Heinonen2001LecturesonAnalysis}. We use the following consequence of \cite{gjq}.

\begin{definition}[Admissible exponent]
\label{def:admissible-exponent}
Fix a logarithmic base $\lbase>1$. Define
\[
\theta_0(\lbase)
:=
\frac{\log 2}{2\log(4\lbase)}.
\]
We say that $\theta>0$ is \emph{admissible for the base $\lbase$} if $0<\theta\le  \theta_0(\lbase)$.
\end{definition}

\begin{theorem}
\label{thm:log-metric-background}
Let $\lbase>1$, and let $\theta>0$ be admissible for the base $\lbase$. Then there exists a genuine
metric $d_{\go,\lbase,\theta}^{X, \kappa}$ on $\pka X$ such that
\[
d_{\go,\lbase,\theta}^{X, \kappa}(\bfa,\bfb)
\asymp
\rho_{\go,\lbase}(\bfa,\bfb)^\theta \asymp
\frac{1}{\bigl(1+r_\go(\bfa,\bfb)\bigr)^\theta}.
\]
The metric $d_{\go,\lbase,\theta}$ induces the usual topology on the
$\kappa$-Morse boundary.
\end{theorem}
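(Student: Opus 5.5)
The plan is the standard snowflake-and-chain (Frink) construction applied to the quasi-metric $\rho_{\go,\lbase}$ of Proposition~\ref{prop:log-quasimetric-from-gjq}, followed by a comparison of metric balls with the basic sets $\calU_\kappa$.

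\textbf{Step 1: snowflaking.} First I record that $\rho:=\rho_{\go,\lbase}$ is symmetric, since $r_\go$ is defined in \eqref{eq:def_r} as a minimum over both orderings. It also vanishes exactly on the diagonal. By Proposition~\ref{prop:log-quasimetric-from-gjq}, $\rho$ satisfies the ultrametric-type inequality with constant $K=2\lbase$. Hence $\rho^\theta$ satisfies
\[
\rho^\theta(\bfa,\bfc)\le K^\theta\max\{\rho^\theta(\bfa,\bfb),\rho^\theta(\bfb,\bfc)\}.
\]
For $\theta\le\theta_0(\lbase)$ we get
\[
K^\theta\le(4\lbase)^{\theta}\le (4\lbase)^{\log 2/(2\log(4\lbase))}=\sqrt2\le 2.
\]
This is exactly the hypothesis of Frink's metrization lemma, in the form of \cite[Proposition~14.5]{Heinonen2001LecturesonAnalysis}.

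\textbf{Step 2: the chain metric.} I define
\[
d_{\go,\lbase,\theta}(\bfa,\bfb):=\inf\Big\{\sum_{i=0}^{n-1}\rho^\theta(\bfx_i,\bfx_{i+1})\Big\},
\]
where the infimum is over all finite chains $\bfa=\bfx_0,\dots,\bfx_n=\bfb$. This is a pseudometric, and clearly $d\le\rho^\theta$. The Frink lemma gives the reverse bound
\[
\rho^\theta(\bfa,\bfb)\le 4\,d(\bfa,\bfb).
\]
I would prove this by the usual induction on chain length. Split the chain at the last index where the partial sum is at most half the total. Then apply the $2$-quasi-ultrametric inequality twice to the two halves and the middle link. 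Therefore $d\asymp\rho^\theta$, and $d$ is a genuine metric. Composing with the second estimate of Proposition~\ref{prop:log-quasimetric-from-gjq} gives
\[
d\asymp(1+r_\go)^{-\theta}.
\]
All constants depend only on $\lbase$ and $\theta$.

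\textbf{Step 3: the topology (main obstacle).} Because $d\asymp(1+r_\go)^{-\theta}$, the small $d$-balls about $\bfa$ are sandwiched between sets of the form $\{\bfb: r_\go(\bfa,\bfb)>R\}$ for large $R$. So it suffices to show two things.
\begin{enumerate}
\item Each such set is contained in some $\calU_\kappa(\bfa,R')$ with $R'\to\infty$ as $R\to\infty$.
\item Each $\calU_\kappa(\bfa,R)$ contains such a set.
\end{enumerate}
The first inclusion should follow from the definition of $R_{\alpha_0}(\bfb)$ as a supremum, together with monotonicity of $\calU_\kappa(\alpha_0,r)$ in $r$. That monotonicity needs checking: the smallness cut-off $r/(24\kappa(r))$ increases with $r$, while the required containment $\beta|_r\subseteq\calN_\kappa$ becomes stronger. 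It may therefore only hold up to a multiplicative change of radius, and I would prove such a coarse nesting lemma first.

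The second inclusion is the real difficulty, because $r_\go$ involves the minimum with $R_{\beta_0}(\bfa)$. One therefore needs a coarse symmetry statement: if $\bfb\in\calU_\kappa(\alpha_0,R)$, then $\bfa\in\calU_\kappa(\beta_0,cR)$ for a uniform $c>0$. I would try to obtain this from the uniform gauge $\sM$ of Remark~\ref{Rem:uniform-morse-gauge}. Since $\beta_0|_R$ lies in a $\sM\kappa$-neighborhood of $\alpha_0$, a quasi-geodesic representative of $\bfa$ can be concatenated through $\beta_0$ and straightened. Concavity, via Lemma~\ref{Lem:kappa_basic}, then absorbs the resulting additive errors in $\kappa$ at radius $cR$. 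Alternatively, one can invoke the corresponding symmetry lemma of \cite{QRT2}, which is what guarantees that the sets $\calU_\kappa$ form a basis.
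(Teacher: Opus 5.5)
Your Steps 1 and 2 follow the route the paper indicates. The paper gives no argument of its own: it takes the metric from \cite{gjq}, obtained by snowflaking $\rho_{\go,\lbase}$ and applying the chain construction of \cite[Proposition~14.5]{Heinonen2001LecturesonAnalysis}. One correction: your halving induction closes only with the constant $\sqrt2$ you computed, not with $2$. Applying $(2\lbase)^\theta\le\sqrt2$ twice gives the three-point inequality with constant $2$, and the induction then yields $\rho^\theta\le 2d$. Applying the constant $2$ twice gives $4$, and the same induction does not reproduce its own hypothesis; getting $\rho^\theta\le 4d$ from constant $2$ needs Frink's weighted chain estimate. The factor $2$ in the denominator of $\theta_0(\lbase)$ is what provides the $\sqrt2$.

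Step 3, which the paper simply defers to \cite{gjq}, is where your proposal has a genuine gap. The nesting is not an issue. Since $\kappa$ is concave with $\kappa(0)\ge 1$, the map $t\mapsto t/\kappa(t)$ is nondecreasing, and $t_r\le t_T$ for $r\le T$. Hence $\calU_\kappa(\alpha_0,T)\subseteq\calU_\kappa(\alpha_0,r)$ exactly, which gives $\{\bfb: r_\go(\bfa,\bfb)>R\}\subseteq\calU_\kappa(\alpha_0,R)$.

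The symmetry, however, is left unproved, and the mechanism you sketch does not supply it. Membership $\bfa\in\calU_\kappa(\beta_0,cR)$ demands containment in $\calN_\kappa(\beta_0,\sM(\qq,\sQ))$ with the same fixed multiplier $\sM(\qq,\sQ)$. Concavity of $\kappa$ only relates values of $\kappa$ at different radii. It cannot absorb the extra error produced by reversing $\beta_0|_R\subseteq\calN_\kappa(\alpha_0,\sM(1,0))$ back into that multiplier. The missing ingredient is hyperbolic thinness. It upgrades $\sM\kappa$-closeness to $O(\delta)$-closeness up to the Gromov product, at the cost of shrinking the radius by $\sM(1,0)\kappa(R)+O(\delta)$.

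The results of Section~\ref{S:relationship}, which do not use the topological assertion, close the gap:
\begin{enumerate}
\item Take $T$ so large that $\sM(1,0)<T/(24\kappa(T))$ and $\sM(1,0)\kappa(T)\le T$.
\item Then $\bfb\in\calU_\kappa(\alpha_0,T)$ gives $d(\beta_0(T),\alpha_0)\le \sM(1,0)\kappa(T)$.
\item Lemma~\ref{lem:gp_basic_2} then gives $(\bfa\mid\bfb)_\go\ge T-\sM(1,0)\kappa(T)-2\delta$.
\item The lower-bound half of Proposition~\ref{prop:relationship_p_r} gives $r_\go(\bfa,\bfb)\ge(\bfa\mid\bfb)_\go$. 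This uses Lemma~\ref{lem:GP_basic} together with $2H+10\delta\le\sM(\qq,\sQ)$, and yields membership in both $\calU_\kappa(\alpha_0,S)$ and $\calU_\kappa(\beta_0,S)$ for every $S\le(\bfa\mid\bfb)_\go$.
\end{enumerate}
Thus $\calU_\kappa(\alpha_0,T)\subseteq\{\bfb:r_\go(\bfa,\bfb)>R\}$ as soon as $T-\sM(1,0)\kappa(T)-2\delta>R$.

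Equivalently, combine $d\asymp(1+(\bfa\mid\bfb)_\go)^{-\theta}$ with the homeomorphism of Proposition~\ref{prop:visual-kappa-homeomorphism}. Without the uniform hyperbolic gauge $\sM$, this symmetry is exactly what would have to be imported from \cite{gjq}.
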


\begin{definition}[Log metric]
\label{def:log-metric}
Any metric $d_{\go,\lbase,\theta}^{X, \kappa}$ obtained as in
Theorem~\ref{thm:log-metric-background} is called a \emph{log metric} on
$\pka X$.
\end{definition}

The logarithmic metric constructed in \cite{gjq} depends on a basepoint
$\go\in X$, a logarithmic base $\lbase>1$, and a snowflaking exponent
$\theta>0$. When all parameters are displayed, we write it as
\[
d_{\go,\lbase,\theta}^{X, \kappa}.
\]

\subsubsection*{Dependence on the parameters}
By \cite[Proposition~3.16]{gjq}, changing the logarithmic base while keeping the
same snowflaking exponent changes the metric only up to bi-Lipschitz
equivalence.

Moreover, changing the snowflaking exponent changes the metric
only up to quasisymmetric equivalence; more precisely, if
$d_{\go,\lbase,\theta}^{X, \kappa}$ and $d_{\go,\lbase',\theta'}^{X, \kappa}$ are two admissible log
metrics, then the identity map
$
\id\from
(\pka X,d_{\go,\lbase,\theta}^{X, \kappa})
\longrightarrow
(\pka X,d_{\go,\lbase',\theta'}^{X, \kappa})
$
is quasisymmetric with distortion of power type.
In this paper in Section~\ref{S:relationship}, we consider the dependency on the basepoint $\go$ and on the choice of geodesic representatives in $r(\bfa, \bfb)$.

\subsubsection*{Standing convention.}
For the remainder of the paper, we fix the logarithmic base
$\lambda=2$. Accordingly, $
\theta_0(2)=1/6$, and, unless stated otherwise, an exponent $\theta$ is called \emph{admissible} if $0<\theta\le 1/6$.

When all relevant data are displayed, we denote the logarithmic metric by
\[
    d^{X,\kappa}_{\go,\theta}
    :=
    d^{X,\kappa}_{\go,2,\theta}.
\]
When $\kappa$ is fixed and understood, we write $d^X_{\go,\theta}$.
When $X$, $\go$, and $\theta$ are also understood, we use the
abbreviation
\[
d_{\mathrm{log}}:=d^X_{\go,\theta}.
\]
\subsubsection{The log metric on the full sublinear Morse boundary}

Let $\kappa_0(t)\equiv 1$. By
Corollary~\ref{cor:hyperbolic-boundary-stabilizes}, we may regard
$\partial_{\kappa_0}X$ as the full sublinear Morse boundary.
\begin{definition}[Full logarithmic metric]
\label{def:full-log-metric}
For an admissible exponent $\theta>0$, define
$d^{X,\mathrm{sl}}_{\go,\lbase,\theta} := d^{X,\kappa_0}_{\go,\lbase,\theta}$ on $\psl X$.
\end{definition}
\begin{remark}
Suppose that
$\bfa\in\partial_\kappa X$ and
$\bfb\in\partial_{\kappa'}X$. Under the canonical identifications of
Corollary~\ref{cor:hyperbolic-boundary-stabilizes}, both points belong
to $\partial_{\kappa_0}X$, and their distance is
\[
d^{X,\mathrm{sl}}_{\go,\lbase,\theta}(\bfa,\bfb)
=
d^{X,\kappa_0}_{\go,\lbase,\theta}(\bfa,\bfb).
\]
Thus the value does not depend on the strata that were originally used
to describe $\bfa$ and $\bfb$.
\end{remark}

\subsubsection{Polynomial Floyd metrics}
\label{subsubsec:polynomial-floyd}

Lastly, we recall the Floyd radial deformation of a metric space, following \cite{Floyd80,4BK09:floydideal}. Fix $\theta>0$ and define
\[
    g_\theta(t)
    :=
    \frac{1}{(1+t)^{1+\theta}},
    \qquad t\geq 0.
\]
For a rectifiable path $\gamma:[0,L]\to X$, parametrized by arclength
with respect to the original metric on $X$, define its
$g_\theta$-length by
$
\operatorname{len}_{\theta}(\gamma)
:=
\int_0^L
g_\theta\bigl(d(\go,\gamma(t))\bigr)\,dt.
$
The associated path metric on $X$ is
\[
\sigma_\theta(x,y)
:=
\inf_\gamma
\operatorname{len}_{\theta}(\gamma),
\]
where the infimum is taken over all rectifiable paths in $X$ joining
$x$ to $y$.

The tail function associated with $g_\theta$ is
$
G_\theta(s)
:=
\int_s^\infty g_\theta(t)\,dt
=
\frac{1}{\theta(1+s)^\theta}.
$
In particular,
\begin{equation}
\label{eq:polynomial-floyd-tail}
G_\theta(s)
\asymp
\frac{1}{(1+s)^\theta}.
\end{equation}
Since
$G_\theta(s)=\theta^{-1}(1+s)g_\theta(s)$, the function $g_\theta$ is
also a sphericalization function in the sense of
\cite[Section~1.6]{4BK09:floydideal}.

One can verify that $g_\theta$ is a Floyd function in the sense of
\cite[Section~1.6]{4BK09:floydideal} with a Floyd constant depending
only on $\theta$. It follows that $\sigma_\theta$ is a bounded metric
on $X$; see \cite[Section~1.6]{4BK09:floydideal}.

Let $\overline X^{\,\theta}$ denote the metric completion of
$(X,\sigma_\theta)$ and define
\[
\partial_{F,\theta}X
:=
\overline X^{\,\theta}\setminus X.
\]
We continue to denote the extension of $\sigma_\theta$ to
$\overline X^{\,\theta}$ by $\sigma_\theta$. The metric $ \sigma_\theta|_{\partial_{F,\theta}X}$ is called the \emph{polynomial Floyd boundary metric associated with $g_\theta$}, and $\partial_{F,\theta}X$ is the corresponding \emph{polynomial Floyd boundary}.

When $X$ is a unit-edge Cayley graph, this construction is
bi-Lipschitz equivalent to the usual discrete Floyd rescaling with
edge weight $f_\theta(n)=(1+n)^{-1-\theta}$. Indeed, an edge whose
nearest endpoint has distance $n$ from $\go$ has
$g_\theta$-length between $g_\theta(n+1)$ and $g_\theta(n)$, and these
two quantities are uniformly comparable. Moreover,
$\sum_{k=n}^\infty f_\theta(k)\asymp(1+n)^{-\theta}$. Thus the radial
edge weights have exponent $1+\theta$, while the induced boundary
scale has exponent $\theta$.

For distinct $a,b\in\partial_{F,\theta}X$, define
\[
\langle a\mid b\rangle_{\go,F,\theta}
:=
\inf
\left\{
\liminf_{i,j\to\infty}
(x_i\mid y_j)_\go
:
x_i\overset{\sigma_\theta}{\longrightarrow}a,\ 
y_j\overset{\sigma_\theta}{\longrightarrow}b
\right\}.
\]
We use angle brackets to distinguish this convention from the boundary
Gromov product of Definition~\ref{Def:boundary-gromov-product}, which
is defined using a supremum. If $X$ is proper, geodesic, and
$\delta$-hyperbolic, then the natural map
$J_\theta:\partial X\to\partial_{F,\theta}X$ is a homeomorphism and
\begin{equation}\label{eq:floyd_bk_comparsion}
\sigma_\theta(a,b)
\asymp
G_\theta\bigl(\langle a\mid b\rangle_{\go,F,\theta}\bigr)
\asymp
\bigl(1+\langle a\mid b\rangle_{\go,F,\theta}\bigr)^{-\theta};    
\end{equation}
see \cite[Corollary~2.3 and Theorem~2.11]{4BK09:floydideal}. The
comparison constants depend only on $\delta$ and $\theta$.

\subsection{Geometric maps}

In this subsection, we recall two classes of maps that have been used to study conformal structures on visual boundaries of hyperbolic groups. These notions arise naturally in analysis on metric spaces, and we record an elementary relationship between them.

Let $(X,d_X)$ and $(Y,d_Y)$ be metric spaces, and let
$\eta\colon[0,\infty)\to[0,\infty)$ be a homeomorphism. A homeomorphism
$f\colon X\to Y$ is called \emph{$\eta$-quasisymmetric} if, for every
triple of pairwise distinct points $x,y,z\in X$,
\begin{equation}\label{eq:quasisymmetry}
 \frac{d_Y(f(x),f(y))}
      {d_Y(f(x),f(z))}
 \le 
 \eta\left(
 \frac{d_X(x,y)}
      {d_X(x,z)}
 \right).
\end{equation}
The map $f$ is called \emph{quasisymmetric} if it is
$\eta$-quasisymmetric for some such homeomorphism $\eta$. Two metric
spaces are \emph{quasisymmetrically equivalent} if there exists a
quasisymmetric homeomorphism between them.

\begin{definition}[Conformal gauge]\label{def:conf_gauge}
Let $Z$ be a set and let $d$ be a metric on $Z$. The \emph{conformal gauge} of $d$ is the collection of metrics $d'$ on $Z$ for which the identity map

$$ \id:(Z,d)\longrightarrow (Z,d') $$

is quasisymmetric. Two metrics on $Z$ are said to belong to the same conformal gauge if the identity map between them is quasisymmetric.
\end{definition}

Recall for $L\ge 1$, a map $f\colon X\to Y$ is called an
$L$-\emph{bi-Lipschitz embedding} if
\[
 L^{-1}d_X(x,x')
 \le 
 d_Y\bigl(f(x),f(x')\bigr)
 \le 
 Ld_X(x,x')
\]
for every $x,x'\in X$. If, in addition, $f$ is surjective, then it is
called an $L$-\emph{bi-Lipschitz homeomorphism}.

Quasisymmetry is a purely metric condition introduced by
Tukia and V\"ais\"al\"a \cite{TukiaVaisala1980}; see also
\cite[Chapters~10-12]{Heinonen2001LecturesonAnalysis} for further
background. Every $L$-bi-Lipschitz homeomorphism is quasisymmetric,
with distortion function $\eta(t)=L^2t$. Moreover, if $f$ is
$\eta_1$-quasisymmetric and $g$ is $\eta_2$-quasisymmetric, then
$g\circ f$ is $(\eta_2\circ\eta_1)$-quasisymmetric. As a basic
non-bi-Lipschitz example, for every $p>0$, the map
$f\colon[0,\infty)\to[0,\infty)$ given by $f(x)=x^p$ is
quasisymmetric.

\section{Relationship between the log metric and the visual metric} \label{S:relationship}

Throughout this section, $X$ is a proper geodesic $\delta$-hyperbolic space
with basepoint $\go$. By Remark~\ref{rem:identification_vis_sublinear}, we identify $\pka X$ and $\vis X$, and simply denote them as $\bdy X$ henceforth. 

The purpose of this section is to compare the two quantities that measure
fellow traveling of boundary points. On the visual boundary, this role is played
by the Gromov product $(\bfa\mid\bfb)_\go$. On the sublinear Morse boundary,
the analogous quantity is the symmetric $\kappa$-fellow-traveling radius
$r_\go(\bfa,\bfb)$ as in Definition~\ref{Def:kappa-fellow-traveling-radius}.

\subsection{Preliminary hyperbolic estimates}
\label{subsec:preliminary-hyperbolic-estimates}

We begin with elementary estimates in hyperbolic spaces needed in subsequent proofs. 

\begin{lemma}
\label{lem:GP_basic}
Let $\bfa,\bfb\in \bdy X$ be distinct boundary points, and let
$\alpha_0,\beta_0$ be geodesic representatives of $\bfa,\bfb$, respectively. Then for all $s,t\ge  0$,
\[
(\alpha_0(s)\mid\beta_0(t))_\go
\ge 
\min\{s,t,(\bfa\mid\bfb)_\go\}-5\delta.
\]
Moreover, $ d\bigl(\alpha_0(s),\beta_0(t)\bigr) \le  s+t+10\delta - 2\min\{s,t,(\bfa\mid\bfb)_\go\}$.
\end{lemma}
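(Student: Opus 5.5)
The plan is to combine the four-point hyperbolicity inequality with the boundary product estimate \eqref{eq:BH_upper_lower_bound_P}. Along a geodesic ray from $\go$ we have $(\alpha_0(s)\mid\alpha_0(s'))_\go=\min\{s,s'\}$ exactly, and likewise for $\beta_0$. Fix $s,t\ge 0$, and choose large $n,m$ (eventually $n,m\to\infty$) with $n\ge s$ and $m\ge t$. Applying the $\delta$-hyperbolicity inequality twice gives
\[
(\alpha_0(s)\mid\beta_0(t))_\go\ge\min\{(\alpha_0(s)\mid\alpha_0(n))_\go,(\alpha_0(n)\mid\beta_0(m))_\go,(\beta_0(m)\mid\beta_0(t))_\go\}-2\delta .
\]
The right-hand side equals $\min\{s,(\alpha_0(n)\mid\beta_0(m))_\go,t\}-2\delta$.

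Next I pass to the limit. Taking $\liminf_{n,m\to\infty}$ of the middle term and using the lower bound in \eqref{eq:BH_upper_lower_bound_P}, $\liminf_{n,m}(\alpha_0(n)\mid\beta_0(m))_\go\ge(\bfa\mid\bfb)_\go-2\delta$. The left-hand side does not depend on $n,m$, so I will choose $n,m$ along which the middle Gromov product exceeds $(\bfa\mid\bfb)_\go-2\delta-\epsilon$ for any $\epsilon>0$. This yields
\[
(\alpha_0(s)\mid\beta_0(t))_\go\ge\min\{s,t,(\bfa\mid\bfb)_\go-2\delta\}-2\delta-\epsilon .
\]
Since $\min\{s,t,P-2\delta\}\ge\min\{s,t,P\}-2\delta$, letting $\epsilon\to0$ gives a bound of $\min\{s,t,(\bfa\mid\bfb)_\go\}-4\delta$, which is stronger than the claimed $5\delta$. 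The extra slack absorbs any convention differences, for example if the paper's $\delta$-hyperbolicity is phrased slightly differently or if $(\bfa\mid\bfb)_\go$ is infinite (excluded here, since $\bfa\ne\bfb$).

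The distance bound then follows by unwinding the definition of the Gromov product. We have $d(\alpha_0(s),\beta_0(t))=\|\alpha_0(s)\|+\|\beta_0(t)\|-2(\alpha_0(s)\mid\beta_0(t))_\go$. Because $\alpha_0$ and $\beta_0$ are geodesic rays from $\go$, $\|\alpha_0(s)\|=s$ and $\|\beta_0(t)\|=t$. Substituting the first inequality gives $d\le s+t-2\min\{s,t,(\bfa\mid\bfb)_\go\}+10\delta$.

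The only delicate step is the limiting argument. The $\liminf$ over the double sequence must be handled so that the fixed left side is compared with values at suitable large $(n,m)$. This is routine because the bound holds for every sufficiently large pair.
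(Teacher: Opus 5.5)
Your proposal is correct and matches the paper's proof essentially step for step. You use two applications of the four-point condition through $\alpha_0(n)$ and $\beta_0(m)$, then the lower bound in \eqref{eq:BH_upper_lower_bound_P} with an $\epsilon$ to reach $4\delta$, and finally unwind the Gromov product (with $\|\alpha_0(s)\|=s$, $\|\beta_0(t)\|=t$) to get the distance bound.
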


\begin{proof}
The second inequality follows immediately from the first, since
\[
d\bigl(\alpha_0(s),\beta_0(t)\bigr)
=
s+t-2(\alpha_0(s)\mid\beta_0(t))_\go.
\]

We prove the first inequality. Let $P=(\bfa\mid\bfb)_\go$, and  \eqref{eq:BH_upper_lower_bound_P} gives,
\[
P-2\delta\,\,
\le \,\,
\liminf_{u,v\to\infty}
(\alpha_0(v)\mid\beta_0(u))_\go \,\,\le  \,\, P.
\]
Fix $s,t\ge  0$. For every $v\ge  s$ and $u\ge  t$, applying
$\delta$-hyperbolicity twice gives
\[
\begin{aligned}
(\alpha_0(s)\mid\beta_0(t))_\go
&\ge 
\min\{
(\alpha_0(s)\mid\alpha_0(v))_\go,
(\alpha_0(v)\mid\beta_0(u))_\go,
(\beta_0(u)\mid\beta_0(t))_\go
\}
-2\delta .
\end{aligned}
\]
Since $\alpha_0$ and $\beta_0$ are geodesic rays based at $\go$, we have
\[
(\alpha_0(s)\mid\alpha_0(v))_\go=s,
\qquad
(\beta_0(u)\mid\beta_0(t))_\go=t.
\]
Therefore, we obtain, 
\[
(\alpha_0(s)\mid\beta_0(t))_\go
\ge 
\min\{s,t,(\alpha_0(v)\mid\beta_0(u))_\go\}
-2\delta .
\]
Let $\varepsilon>0$. By the lower bound on the liminf, there exist arbitrarily
large $u$ and $v$ such that
\[
(\alpha_0(v)\mid\beta_0(u))_\go
\ge 
P-2\delta-\varepsilon.
\]
For such $u$ and $v$, we further obtain
\[
    (\alpha_0(s)\mid\beta_0(t))_\go
    \ge 
    \min\{s,t,P-2\delta-\varepsilon\}
    -2\delta
    \ge 
    \min\{s,t,P\}-4\delta-\varepsilon.
\]
Letting $\varepsilon\to 0$ gives 
\(
    (\alpha_0(s)\mid\beta_0(t))_\go
    \ge 
    \min\{s,t,P\}-4\delta.
\)
Hence, the stated estimate with $5\delta$ holds.
\end{proof}

The next estimate gives a converse type statement: if a point far out on one
geodesic ray lies close to the other ray, then the boundary Gromov product must
be large.

\begin{lemma}
\label{lem:gp_basic_2}
Let $\bfa,\bfb\in \bdy X$ be distinct boundary points, and let
$\alpha_0,\beta_0$ be geodesic representatives of $\bfa,\bfb$, respectively. Suppose that for some $S>0$ and $0 \le L \le  S$, $d\bigl(\beta_0(S),\alpha_0\bigr)\le  L$. Then
$
(\bfa\mid\bfb)_\go
\ge 
S-L-2\delta.
$
\end{lemma}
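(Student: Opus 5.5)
Pick a point $x=\alpha_0(u)$ with $d(\beta_0(S),x)\le L$; such a point exists because $\alpha_0$ is closed and $X$ is proper (otherwise use a near-minimizer plus $\varepsilon$ and let $\varepsilon\to0$). The plan is first to bound the finite Gromov product $(\alpha_0(u)\mid\beta_0(S))_\go$ from below by roughly $S-L$, and then to push this bound out to the boundary through $\delta$-hyperbolicity and the definition of $(\bfa\mid\bfb)_\go$ as a supremum of liminfs.

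\textbf{Step 1 (finite estimate).} The triangle inequality gives $|u-S|\le L$, since $\|\alpha_0(u)\|=u$ and $\|\beta_0(S)\|=S$. Hence
\[
(\alpha_0(u)\mid\beta_0(S))_\go=\tfrac12\bigl(u+S-d(\alpha_0(u),\beta_0(S))\bigr)\ge \tfrac12(S-L+S-L)=S-L.
\]

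\textbf{Step 2 (pass to the boundary).} Take the sequences $\alpha_0(n)\to\bfa$ and $\beta_0(m)\to\bfb$. For $n\ge u$ and $m\ge S$, apply the four-point condition twice:
\[
(\alpha_0(n)\mid\beta_0(m))_\go\ge\min\{(\alpha_0(n)\mid\alpha_0(u))_\go,(\alpha_0(u)\mid\beta_0(S))_\go,(\beta_0(S)\mid\beta_0(m))_\go\}-2\delta .
\]
Along a geodesic ray the outer two terms equal $u$ and $S$. Since $u\ge S-L$, the minimum is at least $S-L$, so $(\alpha_0(n)\mid\beta_0(m))_\go\ge S-L-2\delta$ for all large $n,m$.

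\textbf{Step 3 (conclude).} The liminf over $n,m\to\infty$ is therefore at least $S-L-2\delta$. By Definition~\ref{Def:boundary-gromov-product}, the supremum over sequences is at least this liminf, which gives $(\bfa\mid\bfb)_\go\ge S-L-2\delta$; one can also cite the right-hand inequality of \eqref{eq:BH_upper_lower_bound_P} directly.

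The only delicate point is Step 2. We need $u\ge S-L$ so that the $\alpha_0$-term does not undercut the bound, and this is where the hypothesis $0\le L\le S$ and the estimate $|u-S|\le L$ enter. The direction of the comparison in Step 3 is favorable, since the boundary product is defined as a supremum, so no extra $\delta$ loss occurs beyond the $2\delta$ from two applications of hyperbolicity.
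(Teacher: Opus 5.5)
Your proof is correct and follows the paper's argument step by step: you get the finite estimate $(\alpha_0(u)\mid\beta_0(S))_\go\ge S-L$ from $|u-S|\le L$, apply $\delta$-hyperbolicity twice along the rays, and then compare the resulting liminf with $(\bfa\mid\bfb)_\go$. The paper cites the right-hand inequality of \eqref{eq:BH_upper_lower_bound_P} for this last step, which is equivalent to your direct appeal to the supremum in the definition. One small correction to your closing remark: $u\ge S-L$ follows from the triangle inequality alone, so the hypothesis $L\le S$ is never used (when $L>S$ the conclusion is trivial because $(\bfa\mid\bfb)_\go\ge 0$).
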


\begin{proof}
Let $P=(\bfa\mid\bfb)_\go$. Choose $t\ge  0$ such that $d\bigl(\beta_0(S),\alpha_0(t)\bigr)\le  L$. Since $\alpha_0$ and $\beta_0$ are geodesic rays based at $\go$, this implies $|t-S|\le  L$. Moreover,
\[
    \begin{aligned}
    (\alpha_0(t)\mid\beta_0(S))_\go
    &=
    \frac{1}{2}\bigl(t+S-d(\alpha_0(t),\beta_0(S))\bigr) \\
    &\ge 
    \frac{1}{2}(t+S-L) \\
    &\ge 
    S-L.
    \end{aligned}
\]
Now let $u\ge  t$ and $v\ge  S$. Applying $\delta$-hyperbolicity twice gives
\[
    \begin{aligned}
    (\alpha_0(u)\mid\beta_0(v))_\go
    &\ge 
    \min\{
    (\alpha_0(u)\mid\alpha_0(t))_\go,
    (\alpha_0(t)\mid\beta_0(S))_\go,
    (\beta_0(S)\mid\beta_0(v))_\go
    \}
    -2\delta \\
    &\ge 
    \min\{t,S-L,S\}-2\delta.
    \end{aligned}
\]
Since $t\ge  S-L$, we get
\[
(\alpha_0(u)\mid\beta_0(v))_\go
\ge 
S-L-2\delta
\]
for all $u\ge  t$ and $v\ge  S$. Hence,
\[
\liminf_{u,v\to\infty}
(\alpha_0(u)\mid\beta_0(v))_\go
\ge 
S-L-2\delta.
\]
By \eqref{eq:BH_upper_lower_bound_P}, the liminf above
is bounded above by $P$, which gives the desired inequality.
\end{proof}

\subsection{Comparing $r_\go$ with the Gromov product}
\label{subsec:comparing-r-gromov-product}

We now compare the $\kappa$-fellow-traveling radius
$r_\go(\bfa,\bfb)$ with the Gromov product $(\bfa\mid\bfb)_\go$.
Throughout this subsection, we use the uniform $\kappa$-Morse gauge
$\sM$ for geodesic rays in $X$ from Remark~\ref{Rem:uniform-morse-gauge}.
By increasing $\sM$ if necessary, we assume that for every
$(\qq,\sQ)$ there exists a constant $H=H(\delta,\qq,\sQ)$ such that whenever
a $(\qq,\sQ)$-quasi-geodesic ray and a geodesic ray represent the same point
of $\bdy X$, the quasi-geodesic ray lies in the ordinary $H$-neighborhood of
the geodesic ray, and
\[
2H(\delta,\qq,\sQ)+10\delta
\le 
\sM(\qq,\sQ).
\]
This enlargement does not affect the topology of the $\kappa$-Morse boundary.

\begin{proposition}
\label{prop:relationship_p_r}
Let $\bfa,\bfb\in \bdy X$ be distinct boundary points. Then there
exists a constant $C=C(\delta,\kappa)\ge  1$ such that
\[
(\bfa\mid\bfb)_\go
\le 
r_\go(\bfa,\bfb)
\le 
(\bfa\mid\bfb)_\go
+
C\kappa\bigl((\bfa\mid\bfb)_\go\bigr)
+
C.
\]
Consequently,
\[
\frac{r_\go(\bfa,\bfb)}{(\bfa\mid\bfb)_\go}
\longrightarrow 1
\qquad
\text{as }(\bfa\mid\bfb)_\go\to\infty.
\]
\end{proposition}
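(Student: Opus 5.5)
We prove the two inequalities separately, writing $P=(\bfa\mid\bfb)_\go$ and fixing the geodesic representatives $\alpha_0\in\bfa$, $\beta_0\in\bfb$ used in Definition~\ref{Def:kappa-fellow-traveling-radius}. By symmetry it suffices to treat $R_{\alpha_0}(\bfb)$; the same argument applies verbatim to $R_{\beta_0}(\bfa)$, and taking the minimum gives the bounds for $r_\go(\bfa,\bfb)$.

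\emph{Lower bound $r_\go\ge P$.} We show $\bfb\in\calU_\kappa(\alpha_0,\rr)$ for every $\rr<P$; letting $\rr\uparrow P$ then gives $R_{\alpha_0}(\bfb)\ge P$.

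1. Let $\beta\in\bfb$ be a $(\qq,\sQ)$-quasi-geodesic ray. It lies in the ordinary $H$-neighborhood of $\beta_0$, with $H=H(\delta,\qq,\sQ)$.
2. Fix a point $x=\beta(t)$ with $t\le t_\rr$, so $\|x\|\le\rr$. Choose $s$ with $d(x,\beta_0(s))\le H$; then $s\le\rr+H$.
3. Apply Lemma~\ref{lem:GP_basic} with $t=s$ to the pair $\alpha_0(s),\beta_0(s)$. When $s\le P$ this gives $d(\alpha_0(s),\beta_0(s))\le 10\delta$.
4. The overshoot $s\in(P,\rr+H]$ is only possible when $\rr>P-H$, since $s\le \rr+H$. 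In that case we may instead use the point $\beta_0(\min\{s,P\})$, which is within $H$ of $\beta_0(s)$ along the ray, together with the slack in the gauge.
5. In all cases $d(x,\alpha_0)\le 2H+10\delta\le\sM(\qq,\sQ)\le \sM\kappa(\|x\|)$, using $\kappa\ge1$ and the normalization $2H+10\delta\le\sM$.

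Hence $\beta|_\rr\subseteq\calN_\kappa(\alpha_0,\sM(\qq,\sQ))$, and this holds regardless of the smallness condition. We expect the bookkeeping for $s$ near $P$ to be absorbed by the enlarged gauge; if it is not, one gets $r\ge P-C$, which we handle by adjusting constants in the final statement.

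\emph{Upper bound.} Suppose $\bfb\in\calU_\kappa(\alpha_0,\rr)$. Take the geodesic representative $\beta=\beta_0$ itself. It is a $(1,0)$-quasi-geodesic, with gauge value $m_0:=\sM(1,0)$, a fixed constant. Once $\rr$ is large enough that $m_0<\rr/(24\kappa(\rr))$, which holds for $\rr\ge r_*(\delta,\kappa)$ by sublinearity, the defining condition applies. It gives $d(\beta_0(\rr),\alpha_0)\le m_0\kappa(\rr)$. Lemma~\ref{lem:gp_basic_2} with $S=\rr$ and $L=m_0\kappa(\rr)$ (the hypothesis $L\le\rr$ also holds for large $\rr$) yields
\[
P\ge\rr-m_0\kappa(\rr)-2\delta .
\]
Hence every admissible $\rr$ satisfies $\rr\le\max\{r_*,\;P+m_0\kappa(\rr)+2\delta\}$.

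The main obstacle is inverting this implicit inequality to get a bound in terms of $\kappa(P)$ rather than $\kappa(\rr)$.

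1. Sublinearity gives $m_0\kappa(\rr)\le\rr/2+c$, so $\rr\le 2P+c'$.
2. Monotonicity and Lemma~\ref{Lem:kappa_basic}(1),(2) then give $\kappa(\rr)\le\kappa(2P+c')\le 2\kappa(P)+\kappa(c')$.
3. Substituting back yields $\rr\le P+2m_0\kappa(P)+C$, where $C$ also absorbs $r_*$.

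Taking the supremum over admissible $\rr$ gives $R_{\alpha_0}(\bfb)\le P+C\kappa(P)+C$, and the same bound for $r_\go(\bfa,\bfb)$ follows.

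\emph{Ratio statement.} Dividing the two-sided bound by $P$ and using $\kappa(P)/P\to0$ gives $r_\go(\bfa,\bfb)/P\to1$ as $P\to\infty$.
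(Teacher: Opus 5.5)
Your proof is correct and follows the paper's argument. The lower bound goes through the Morse-lemma point $\beta_0(s)$ and Lemma~\ref{lem:GP_basic}, giving $d(x,\alpha_0)\le 2H+10\delta\le\sM$. The upper bound uses the $(1,0)$-representative $\beta_0$, Lemma~\ref{lem:gp_basic_2}, and the bootstrap $\rr\le 2P+c'\Rightarrow\kappa(\rr)\le 2\kappa(P)+\kappa(c')$.

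Your hedge in the lower bound is unnecessary, and the fallback $r\ge P-C$ would not give the stated inequality. Since $\rr<P$, the overshoot satisfies $s-P\le \rr+H-P<H$, so walking back along $\beta_0$ to $\beta_0(P)$ costs less than $H$. The bound $2H+10\delta$ therefore holds exactly, just as with the paper's choice of $\alpha_0(\min\{t',P\})$.
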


\begin{proof}
Let $P=(\bfa\mid\bfb)_\go$ and $r=r_\go(\bfa,\bfb)$. Let $\alpha_0,\beta_0$ be geodesic representatives of $\bfa,\bfb$,
respectively, based at $\go$.

We first prove the lower bound $P\le  r$. If $P=0$, there is nothing to prove.
Assume $P>0$, and let $S$ satisfy $0<S\le  P$. We show that $
\bfb\in \calU_\kappa(\bfa,S)$. Indeed let $\beta\in\bfb$ be a $(\qq,\sQ)$-quasi-geodesic ray such that $ \sM(\qq,\sQ)<\tfrac{S}{24\kappa(S)}$.  By Definition~\ref{Def:kappa-boundary-neighborhoods}, it is enough to prove
\(
    \beta|_S\subseteq \calN_\kappa(\alpha_0,\sM(\qq,\sQ)).
\)

Let $z\in \beta|_S$. Then $\|z\|\le  S$. Since $\beta$ and $\beta_0$ represent
the same boundary point, the Morse lemma gives a point $\beta_0(t')$ such that
\[
d(z,\beta_0(t'))\le  H(\delta,\qq,\sQ)
\]
as shown in Figure~\ref{fig:prop33}. 
\begin{figure}
    \centering
    \tikzset{every picture/.style={line width=0.75pt}} 
    
    \tikzset{every picture/.style={line width=0.75pt}} 

    \begin{tikzpicture}[x=0.75pt,y=0.75pt,yscale=-0.7,xscale=0.7]
    
    \draw    (73.58,203.25) -- (528.58,201.26) ;
    \draw [shift={(531.58,201.25)}, rotate = 179.75] [fill={rgb, 255:red, 0; green, 0; blue, 0 }  ][line width=0.08]  [draw opacity=0] (8.93,-4.29) -- (0,0) -- (8.93,4.29) -- cycle    ;
    \draw [shift={(73.58,203.25)}, rotate = 359.75] [color={rgb, 255:red, 0; green, 0; blue, 0 }  ][fill={rgb, 255:red, 0; green, 0; blue, 0 }  ][line width=0.75]      (0, 0) circle [x radius= 3.35, y radius= 3.35]   ;
    \draw  [draw opacity=0][dash pattern={on 4.5pt off 4.5pt}] (319.56,62.63) .. controls (357.31,97.26) and (380.98,146.99) .. (380.98,202.25) .. controls (380.98,252.78) and (361.19,298.69) .. (328.94,332.65) -- (191.58,202.25) -- cycle ; \draw  [color={rgb, 255:red, 245; green, 166; blue, 35 }  ,draw opacity=1 ][dash pattern={on 4.5pt off 4.5pt}] (319.56,62.63) .. controls (357.31,97.26) and (380.98,146.99) .. (380.98,202.25) .. controls (380.98,252.78) and (361.19,298.69) .. (328.94,332.65) ;  
    \draw [color={rgb, 255:red, 67; green, 104; blue, 152 }  ,draw opacity=1 ]   (73.58,203.25) .. controls (113.58,173.25) and (101.58,145.25) .. (178.58,121.25) ;
    \draw [color={rgb, 255:red, 67; green, 104; blue, 152 }  ,draw opacity=1 ]   (178.58,121.25) .. controls (218.58,91.25) and (230.58,146.25) .. (307.58,122.25) ;
    \draw [shift={(178.58,121.25)}, rotate = 323.13] [color={rgb, 255:red, 67; green, 104; blue, 152 }  ,draw opacity=1 ][fill={rgb, 255:red, 67; green, 104; blue, 152 }  ,fill opacity=1 ][line width=0.75]      (0, 0) circle [x radius= 3.35, y radius= 3.35]   ;
    \draw [color={rgb, 255:red, 67; green, 104; blue, 152 }  ,draw opacity=1 ]   (307.58,122.25) .. controls (318.07,114.38) and (329.57,140.12) .. (343.58,138.25) .. controls (357.6,136.38) and (365.51,117.25) .. (393.58,114.25) .. controls (421.66,111.25) and (432.34,141.39) .. (447.58,141.25) .. controls (462.29,141.12) and (493.31,146.83) .. (518.83,142.73) ;
    \draw [shift={(521.58,142.25)}, rotate = 169.11] [fill={rgb, 255:red, 67; green, 104; blue, 152 }  ,fill opacity=1 ][line width=0.08]  [draw opacity=0] (8.93,-4.29) -- (0,0) -- (8.93,4.29) -- cycle    ;
    \draw    (178.58,121.25) -- (177.58,202.25) ;
    \draw   (177.58,191) -- (187.17,191) -- (187.17,202.25) -- (177.58,202.25) -- cycle ;
    \draw [color={rgb, 255:red, 139; green, 87; blue, 42 }  ,draw opacity=1 ]   (73.58,203.25) -- (409.82,346.08) ;
    \draw [shift={(412.58,347.25)}, rotate = 203.01] [fill={rgb, 255:red, 139; green, 87; blue, 42 }  ,fill opacity=1 ][line width=0.08]  [draw opacity=0] (8.93,-4.29) -- (0,0) -- (8.93,4.29) -- cycle    ;
    \draw    (177.58,202.25) -- (162.58,240.25) ;
    
    \draw (167.58,92.25) node [anchor=north west][inner sep=0.75pt]  [color={rgb, 255:red, 67; green, 104; blue, 152 }  ,opacity=1 ]  {$z$};
    \draw (523,126) node [anchor=north west][inner sep=0.75pt]  [color={rgb, 255:red, 67; green, 104; blue, 152 }  ,opacity=1 ]  {$\beta $};
    \draw (534,189) node [anchor=north west][inner sep=0.75pt]    {$\beta _{0}$};
    \draw (380,178) node [anchor=north west][inner sep=0.75pt]    {$S$};
    \draw (177.58,202.25) node [anchor=north west][inner sep=0.75pt]    {$\beta _{0}( t')$};
    \draw (182,147) node [anchor=north west][inner sep=0.75pt]  [font=\footnotesize]  {$H( \delta ,\ q,\ Q)$};
    \draw (419,337) node [anchor=north west][inner sep=0.75pt]  [color={rgb, 255:red, 139; green, 87; blue, 42 }  ,opacity=1 ]  {$\alpha _{0}$};
    \draw (134.58,238.25) node [anchor=north west][inner sep=0.75pt]  [color={rgb, 255:red, 139; green, 87; blue, 42 }  ,opacity=1 ]  {$\alpha _{0}( s)$};

    \end{tikzpicture}

    \caption{$\beta \sim \beta_0$ and $d(z, \beta_0(t')) \le H(\delta, \qq, \sQ)$.}
    \label{fig:prop33}
\end{figure}
Thus,
\[
t'
=
d(\go,\beta_0(t'))
\le 
d(\go,z)+H(\delta,\qq,\sQ)
\le 
S+H(\delta,\qq,\sQ)
\le 
P+H(\delta,\qq,\sQ).
\]
Denoting $s=\min\{t',P\}$, by Lemma~\ref{lem:GP_basic},
\[
\begin{aligned}
d(\beta_0(t'),\alpha_0(s))
&\le 
s+t'+10\delta-2\min\{s,t',P\} \\
&=
t'-s+10\delta \\
&=
\max\{0,t'-P\}+10\delta \\
&\le 
H(\delta,\qq,\sQ)+10\delta.
\end{aligned}
\]
Therefore,
\[
\begin{aligned}
d(z,\alpha_0)
&\le 
d(z,\beta_0(t'))+d(\beta_0(t'),\alpha_0(s)) \\
&\le 
2H(\delta,\qq,\sQ)+10\delta \\
&\le 
\sM(\qq,\sQ) \\
&\le 
\sM(\qq,\sQ)\kappa(\|z\|),
\end{aligned}
\]
where the last inequality uses $\kappa\ge  1$. Hence, $z\in \calN_\kappa(\alpha_0,\sM(\qq,\sQ))$. 
Since $z\in \beta|_S$ was arbitrary, we obtain
\[
\beta|_S\subseteq \calN_\kappa(\alpha_0,\sM(\qq,\sQ)).
\]
Thus, $\bfb\in \calU_\kappa(\bfa,S)$. By the same argument with $\bfa$ and
$\bfb$ interchanged, $\bfa\in \calU_\kappa(\bfb,S)$. Therefore
$r_\go(\bfa,\bfb)\ge  S$.
Since this holds for every $0<S\le  P$, we get
\[
r_\go(\bfa,\bfb)\ge  P.
\]

We now prove the upper bound. We set $M(\delta):=\sM(1,0)$. Since $\kappa$ is sublinear, we have $\tfrac{S}{\kappa(S)}\to\infty$ as $S\to\infty$.
Hence there exists $S_0=S_0(M,\kappa)>0$ such that
\[
    M<\frac{S}{24\kappa(S)}
    \qquad
    \text{for all }S\ge  S_0.
\]
If $r\le  S_0+1$, then the desired upper bound follows after increasing the
additive constant. Thus assume $r>S_0+1$. Let $S$ satisfy $S_0<S\le r$. Since $S\le r$, by the definition of $r$ there exists $T>S$ such that
$
\bfb\in \calU_\kappa(\bfa,T).
$
Since $T>S>S_0$, we have $M=\sM(1,0)<\tfrac{T}{24\kappa(T)}$. 
The geodesic ray $\beta_0$ is a $(1,0)$-quasi-geodesic representative of
$\bfb$. Therefore the definition of $\calU_\kappa(\bfa,T)$ gives
\[
\beta_0|_T\subseteq \calN_\kappa(\alpha_0,M).
\]
Since $S<T$, the point $\beta_0(S)$ lies in $\beta_0|_T$. Hence,
\[
d(\beta_0(S),\alpha_0)\le  M\kappa(S).
\]
Applying Lemma~\ref{lem:gp_basic_2} with $L=M\kappa(S)$ gives
\[
P\ge  S-M\kappa(S)-2\delta \quad\implies\quad S\le  P+M\kappa(S)+2\delta.
\]
If $r=\infty$, then this inequality holds for arbitrarily large $S$. This is
impossible, because $S-M\kappa(S)-2\delta\to\infty$ as $S\to\infty$, while
$P<\infty$. Hence $r<\infty$. Letting $S\nearrow r$, we obtain
\[
    r\le  P+M\kappa(r)+2\delta.
\]

We now remove the occurrence of $\kappa(r)$ from the right-hand side. Since
$\kappa$ is sublinear, there exists $R_0=R_0(M,\delta,\kappa)>0$ such that
\[
M\kappa(t)+2\delta\le  \frac{t}{2}
\qquad
\text{for all }t\ge  R_0.
\]
If $r\ge  R_0$, then $r
\le 
P+M\kappa(r)+2\delta
\le 
P+\frac{r}{2}$. Thus $r\le  2P$.
Since $\kappa$ is increasing, Lemma~\ref{Lem:kappa_basic} gives
\[
\kappa(r)
\le 
\kappa(2P)
\le 
2\kappa(P).
\]
Substituting this into $r\le  P+M\kappa(r)+2\delta
$ yields
\[
    r\le  P+2M\kappa(P)+2\delta.
\]
If $r<R_0$, the desired inequality follows after increasing the additive
constant. Therefore there exists $C=C(\delta,\kappa)\ge  1$ such that
\[
r
\le 
P+C\kappa(P)+C.
\]
Combining this with the lower bound $P\le  r$, we obtain
\[
P
\le 
r
\le 
P+C\kappa(P)+C.
\]
Finally, since $\kappa(P)/P\to 0$ as $P\to\infty$, the estimate above implies the desired limit.
\end{proof}

\subsection{Consequences for logarithmic metrics}
\label{subsec:log-metric-gromov-product}

For an admissible exponent $\theta>0$, recall that
\[
d_{\mathrm{log}}:=d^X_{\go,\theta}
\]
denotes the logarithmic metric on $\partial X$ associated with the fixed
sublinear function $\kappa$, basepoint $\go$, and logarithmic base $2$.
We now use Proposition~\ref{prop:relationship_p_r} to express
$d_{\mathrm{log}}$ in terms of the boundary Gromov product.

\begin{corollary}
\label{cor:one-plus-r-one-plus-gromov-product}
Let $\bfa,\bfb\in \bdy X$ be distinct boundary points. Then
\[
1+r_\go(\bfa,\bfb)
\asymp
1+(\bfa\mid\bfb)_\go,
\]
where the comparison constants depend only on $\delta$ and $\kappa$.
\end{corollary}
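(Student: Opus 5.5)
This is a direct consequence of Proposition~\ref{prop:relationship_p_r}. Write $P=(\bfa\mid\bfb)_\go$ and $r=r_\go(\bfa,\bfb)$. Since $\bfa\neq\bfb$, we have $P<\infty$, and the proposition also gives $r<\infty$. The lower bound is immediate: the inequality $P\le r$ yields
\[
1+P\le 1+r,
\]
with constant $1$.

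For the upper bound we need a constant $C'\ge 1$, depending only on $\delta$ and $\kappa$, such that $1+r\le C'(1+P)$. The proposition gives
\[
1+r\le 1+P+C\kappa(P)+C .
\]
It therefore suffices to bound $\kappa(P)$ linearly in $1+P$. We use concavity of $\kappa$ together with Lemma~\ref{Lem:kappa_basic}(2), which gives subadditivity:
\[
\kappa(P)\le \kappa(1+P)\le \kappa(1)\lceil 1+P\rceil\le 2\kappa(1)(1+P).
\]
Equivalently, one can apply Lemma~\ref{Lem:kappa_basic}(1) with $\lambda=1+P\ge 1$ and $t=1$ to get $\kappa(1+P)\le (1+P)\kappa(1)$ directly, and then use that $\kappa$ is increasing.

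Substituting this bound, we obtain
\[
1+r\le (1+P)\bigl(1+C\kappa(1)+C\bigr),
\]
using $C\le C(1+P)$. So $C'=1+C+C\kappa(1)$ works, and it depends only on $\delta$ and $\kappa$ because $C=C(\delta,\kappa)$.

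The only step needing any care is the linear bound on $\kappa(P)$, which must hold even for small $P$ (including $P=0$). Using $\kappa(1+P)$ rather than $\kappa(P)/P$ avoids any division by zero, so I expect no real obstacle.
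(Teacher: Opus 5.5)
Your proof is correct and follows the paper's argument. The lower bound comes directly from $P\le r$ in Proposition~\ref{prop:relationship_p_r}, and the upper bound comes from a linear bound $\kappa(P)\le L(1+P)$; the only difference is that the paper gets $L$ from boundedness of $t\mapsto\kappa(t)/(1+t)$, while you obtain the explicit value $L=\kappa(1)$ from Lemma~\ref{Lem:kappa_basic}(1) and monotonicity of $\kappa$.
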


\begin{proof}
Let $P=(\bfa\mid\bfb)_\go$ and $r=r_\go(\bfa,\bfb)$. By Proposition~\ref{prop:relationship_p_r},
$P\le  r\le  P+C\kappa(P)+C$ for some constant $C=C(\delta,\kappa)\ge  1$. The lower bound immediately gives $1+P\le  1+r$. For the upper bound, since $\kappa$ is increasing and sublinear, the function
$t\mapsto \kappa(t)/(1+t)$ is bounded on $[0,\infty)$. Hence there exists
$L=L(\kappa)>0$ such that $\kappa(P)\le  L(1+P)$ for all $P\ge  0$. Therefore,
\[
1+r
\le 
1+P+C\kappa(P)+C
\le 
1+P+CL(1+P)+C.
\]
After increasing the multiplicative constant, this gives $1+r\le  C'(1+P)$ for some constant $C'=C'(\delta,\kappa)\ge  1$. Hence,
$
1+r_\go(\bfa,\bfb)
\asymp
1+(\bfa\mid\bfb)_\go.
$
\end{proof}

We can now express a log metric directly in terms of the Gromov product.

\begin{corollary}
\label{cor:log-metric-gromov-product}
Let $\theta>0$ be an admissible snowflaking exponent. Then for all distinct
$\bfa,\bfb\in \bdy X$,
\[
d_{\mathrm{log}}(\bfa,\bfb)
\asymp
\frac{1}
{\bigl(1+(\bfa\mid\bfb)_\go\bigr)^\theta}.
\]
The comparison constants depend only on $\delta$, $\kappa$, $\theta$, and the
constants in the construction of the log metric.
\end{corollary}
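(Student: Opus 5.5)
The plan is to chain together two comparisons that are already available: Theorem~\ref{thm:log-metric-background} and Corollary~\ref{cor:one-plus-r-one-plus-gromov-product}. I will keep careful track of which constants each step introduces.

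First, Theorem~\ref{thm:log-metric-background} with base $\lbase=2$ and admissible $\theta$ supplies a constant $C_1\ge 1$, depending only on $\theta$ and the construction data of the log metric. For all $\bfa,\bfb$ it gives
\[
C_1^{-1}\bigl(1+r_\go(\bfa,\bfb)\bigr)^{-\theta}\le d_{\mathrm{log}}(\bfa,\bfb)\le C_1\bigl(1+r_\go(\bfa,\bfb)\bigr)^{-\theta}.
\]
For distinct points, Proposition~\ref{prop:relationship_p_r} shows that $r_\go(\bfa,\bfb)$ is finite. It also shows that $r_\go(\bfa,\bfb)\ge (\bfa\mid\bfb)_\go\ge 0$, so all quantities above are finite and positive.

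Second, Corollary~\ref{cor:one-plus-r-one-plus-gromov-product} supplies $C_2=C_2(\delta,\kappa)\ge1$ with
\[
C_2^{-1}\bigl(1+(\bfa\mid\bfb)_\go\bigr)\le 1+r_\go(\bfa,\bfb)\le C_2\bigl(1+(\bfa\mid\bfb)_\go\bigr).
\]
The map $t\mapsto t^{-\theta}$ is decreasing on $(0,\infty)$. Raising the displayed inequalities to the power $-\theta$ therefore reverses them, and gives
\[
C_2^{-\theta}\bigl(1+(\bfa\mid\bfb)_\go\bigr)^{-\theta}\le \bigl(1+r_\go(\bfa,\bfb)\bigr)^{-\theta}\le C_2^{\theta}\bigl(1+(\bfa\mid\bfb)_\go\bigr)^{-\theta}.
\]
Combining this with the first step yields the claim with comparison constant $C_1C_2^{\theta}$. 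That constant depends only on $\delta,\kappa,\theta$ and the construction constants, as stated.

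No step here is a genuine obstacle, since all the geometric work was done in Proposition~\ref{prop:relationship_p_r}. The one point to verify is the following: the $r_\go$ in Theorem~\ref{thm:log-metric-background} must be defined with the same uniform gauge $\sM$ and the same fixed geodesic representatives as the $r_\go$ compared in Proposition~\ref{prop:relationship_p_r}. The standing conventions in Remark~\ref{Rem:uniform-morse-gauge} ensure this. Alternatively, the bounds $P\le r\le P+C\kappa(P)+C$ hold for any choice of geodesic representatives, so the comparison is insensitive to that choice.
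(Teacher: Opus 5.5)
Your proposal is correct and follows essentially the same route as the paper. The paper also chains the comparison $d_{\mathrm{log}}\asymp(1+r_\go)^{-\theta}$ from Theorem~\ref{thm:log-metric-background} with $1+r_\go\asymp 1+(\bfa\mid\bfb)_\go$ from Corollary~\ref{cor:one-plus-r-one-plus-gromov-product}; your explicit constant $C_1C_2^{\theta}$ and the consistency check on the gauge $\sM$ simply make that two-line argument fully quantitative.
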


\begin{proof}
By the definition of the logarithmic metric,
\[
d_{\mathrm{log}}(\bfa,\bfb)
\asymp
\frac{1}
{\bigl(1+r_\go(\bfa,\bfb)\bigr)^\theta}.
\]
By Corollary~\ref{cor:one-plus-r-one-plus-gromov-product},
$
1+r_\go(\bfa,\bfb)
\asymp
1+(\bfa\mid\bfb)_\go.
$
Combining the two comparisons proves the claim.
\end{proof}

The previous corollary is the main reason for the term ``logarithmic metric.''
Indeed, visual metrics decay exponentially in the Gromov product, while
logarithmic metrics decay polynomially in the Gromov product. Equivalently, the
log metric is obtained from a visual metric by a logarithmic change of scale.

We now identify the logarithmic metric with the polynomial Floyd metric
introduced in Section~\ref{subsubsec:polynomial-floyd}.

\begin{corollary}
\label{cor:log-metric-floyd}
Let $X$ be a proper geodesic $\delta$-hyperbolic space, and let
$\theta>0$ be admissible for the logarithmic metric. Then the natural
homeomorphism
$J_\theta:\partial X\to\partial_{F,\theta}X$ is bi-Lipschitz from
$(\partial X,d_{\mathrm{log}})$ to
$(\partial_{F,\theta}X,\sigma_\theta)$.
\end{corollary}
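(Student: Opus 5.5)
The argument combines Corollary~\ref{cor:log-metric-gromov-product} with the Buckley--Kokkendorff estimate \eqref{eq:floyd_bk_comparsion}. For distinct $\bfa,\bfb\in\partial X$ we already have
\[
d_{\mathrm{log}}(\bfa,\bfb)\asymp\bigl(1+(\bfa\mid\bfb)_\go\bigr)^{-\theta},
\]
and \eqref{eq:floyd_bk_comparsion} gives
\[
\sigma_\theta(J_\theta\bfa,J_\theta\bfb)\asymp\bigl(1+\langle J_\theta\bfa\mid J_\theta\bfb\rangle_{\go,F,\theta}\bigr)^{-\theta}.
\]
So it suffices to prove
\[
1+\langle J_\theta\bfa\mid J_\theta\bfb\rangle_{\go,F,\theta}\asymp 1+(\bfa\mid\bfb)_\go ,
\]
which gives bi-Lipschitz comparability after raising to the power $-\theta$. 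Bijectivity of $J_\theta$ is part of the cited homeomorphism statement, and the diagonal case $\bfa=\bfb$ is trivial.

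The key step is to compare the infimum-type product with the supremum-type product. A sequence $x_i\to\bfa$ in the Floyd metric $\sigma_\theta$ also converges to $\bfa$ in the Gromov compactification $X\cup\vis X$. I would take this from the homeomorphism $J_\theta$ of \cite{4BK09:floydideal}, which extends to the compactifications: the Floyd completion $\overline X^{\,\theta}$ is identified with $X\cup\partial X$ and $J_\theta$ is the identity on $X$.

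Granting this, both definitions range over essentially the same class of sequences. It is a standard fact in a $\delta$-hyperbolic space (see \cite[Remark~III.H.3.17]{book:BridsonHaefliger}) that for any sequences $x_i\to\bfa$, $y_j\to\bfb$ one has
\[
(\bfa\mid\bfb)_\go-2\delta\le\liminf_{i,j}(x_i\mid y_j)_\go\le(\bfa\mid\bfb)_\go .
\]
Taking the infimum over sequences then yields
\[
(\bfa\mid\bfb)_\go-2\delta\le\langle J_\theta\bfa\mid J_\theta\bfb\rangle_{\go,F,\theta}\le(\bfa\mid\bfb)_\go .
\]
The set of admissible sequences is nonempty, since the geodesic representatives $\alpha_0(n)$ and $\beta_0(m)$ converge in $\sigma_\theta$ as well.

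From the additive bound, using $1+P-2\delta\ge(1+P)/(1+2\delta)$ when $P\ge 2\delta$ and a trivial bound when $P<2\delta$, both Gromov products are nonnegative and $1+\langle\cdot\mid\cdot\rangle\asymp 1+(\cdot\mid\cdot)$ with constants depending only on $\delta$. Combining the three comparisons gives constants depending only on $\delta,\kappa,\theta$.

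I expect the main obstacle to be the claim that Floyd convergence implies Gromov convergence to the corresponding point. It must be extracted carefully from \cite[Theorem~2.11]{4BK09:floydideal}. If the paper's statement only identifies the boundaries, I would instead argue directly. Since $\sigma_\theta(x,\go)$ is bounded below once $\|x\|$ is large, and $\sigma_\theta$-small sets near a boundary point have large Gromov product with that point (by \eqref{eq:floyd_bk_comparsion} applied with interior points, via $G_\theta$), a $\sigma_\theta$-Cauchy sequence escapes to infinity with $(x_i\mid x_k)_\go\to\infty$, hence converges in the Gromov sense.
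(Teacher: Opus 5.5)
Your argument is correct. Its outer structure matches the paper's: with $P=(\bfa\mid\bfb)_\go$ and $Q=\langle J_\theta\bfa\mid J_\theta\bfb\rangle_{\go,F,\theta}$, you reduce to $1+Q\asymp 1+P$ and combine Corollary~\ref{cor:log-metric-gromov-product} with \eqref{eq:floyd_bk_comparsion}. The two proofs differ in how $Q$ is compared with $P$.

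The paper uses only the Gromov-to-Floyd direction of convergence, and only for the geodesic sequences $\alpha_0(n),\beta_0(m)$. This gives $Q\le L\le P$, where $L$ is the geodesic liminf. The reverse inequality $L\le C(1+Q)$ is imported from the second, quantitative assertion of \cite[Theorem~2.11]{4BK09:floydideal}. The outcome is the affine comparison $Q\le P\le C(1+Q)+2\delta$.

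You use the Floyd-to-Gromov direction instead. Then every pair of sequences competing in the infimum defining $Q$ converges to $(\bfa,\bfb)$ in $X\cup\partial X$. The $2\delta$-estimate of \cite[Remark~III.H.3.17(5)]{book:BridsonHaefliger} holds for arbitrary such sequences, so it gives the sharper additive bound $P-2\delta\le Q\le P$. This needs no quantitative input from Buckley--Kokkendorff beyond \eqref{eq:floyd_bk_comparsion}.

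The price is the converse convergence statement you flag, and compactness gives it more cleanly than your fallback:
\begin{enumerate}
\item The map $X\cup\partial X\to\overline X^{\,\theta}$ that is the identity on $X$ and $J_\theta$ on $\partial X$ is a bijection.
\item It is continuous, because sequences converging to $\bfa$ in $X\cup\partial X$ are $\sigma_\theta$-Cauchy with limit $J_\theta(\bfa)$. This follows from $\sigma_\theta(x,y)\le 2G_\theta\bigl((x\mid y)_\go\bigr)$, which you get by integrating $g_\theta$ along a geodesic from $x$ to $y$.
\item A continuous bijection from the compact space $X\cup\partial X$ onto a metric space is a homeomorphism.
\end{enumerate}
Your fallback sketch, by contrast, would need the nontrivial interior lower bound $\sigma_\theta(x,y)\gtrsim G_\theta\bigl((x\mid y)_\go\bigr)$, which \eqref{eq:floyd_bk_comparsion} does not state. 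It would also need injectivity of $J_\theta$ to identify the Gromov limit of a $\sigma_\theta$-Cauchy sequence with $\bfa$.
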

\begin{proof}
Fix distinct $\bfa,\bfb\in\partial X$, and let $\alpha_0,\beta_0$ be
geodesic representatives based at $\go$. Let
$P=(\bfa\mid\bfb)_\go$ and
\[
L
=
\liminf_{n,m\to\infty}
(\alpha_0(n)\mid\beta_0(m))_\go.
\]
By \eqref{eq:BH_upper_lower_bound_P}, we have
$P-2\delta\leq L\leq P$.

Let
$Q=\langle J_\theta(\bfa)\mid J_\theta(\bfb)
\rangle_{\go,F,\theta}$. Since the sequences
$(\alpha_0(n))$ and $(\beta_0(m))$ converge in the Floyd completion to
$J_\theta(\bfa)$ and $J_\theta(\bfb)$, respectively, the definition of
$Q$ gives $Q\leq L$. On the other hand, since $g_\theta$ is a
sphericalization function, the second assertion of
\cite[Theorem~2.11]{4BK09:floydideal} gives
$L\leq C(1+Q)$, where $C$ depends only on $\delta$ and $\theta$.
Therefore, $Q\leq P$ and $P\leq C(1+Q)+2\delta$, which imply $1+P\asymp1+Q$.

The Floyd comparison from
Section~\ref{subsubsec:polynomial-floyd} and
Corollary~\ref{cor:log-metric-gromov-product} now give
\[
\sigma_\theta\bigl(J_\theta(\bfa),J_\theta(\bfb)\bigr)
\asymp
(1+Q)^{-\theta}
\asymp
(1+P)^{-\theta}
\asymp
d_{\mathrm{log}}(\bfa,\bfb).
\]
Thus $J_\theta$ is bi-Lipschitz.
\end{proof}

\begin{corollary}
\label{cor:full-log-gromov-product}
Let $\kappa_0\equiv1$, and let $\theta>0$ be admissible. Under the
identification
$
\partial_{\mathrm{sl}}X
=
\partial_{\kappa_0}X
=
\partial X,
$
the metric $d^{X,\kappa_0}_{\go,\theta}$ induces the usual topology and
satisfies
\[
d^{X,\kappa_0}_{\go,\theta}(\bfa,\bfb)
\asymp
\frac{1}
{\bigl(1+(\bfa\mid\bfb)_\go\bigr)^\theta}.
\]
Moreover, for every sublinear function $\kappa$, the identity map
$
\id:
\bigl(\partial X,d^{X,\kappa_0}_{\go,\theta}\bigr)
\longrightarrow
\bigl(\partial X,d^{X,\kappa}_{\go,\theta}\bigr)
$
is bi-Lipschitz.
\end{corollary}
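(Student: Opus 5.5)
The plan is to observe that the constant function $\kappa_0\equiv 1$ is itself a sublinear function in the sense of Definition~\ref{def:kappa}. It is increasing in the weak sense, concave, takes values in $[1,\infty)$, and satisfies $\kappa_0(r)/r\to 0$. Consequently, every result of Section~\ref{S:relationship} applies verbatim with $\kappa=\kappa_0$.

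First, Corollary~\ref{cor:hyperbolic-boundary-stabilizes} identifies $\partial_{\mathrm{sl}}X=\partial_{\kappa_0}X=\partial X$ as topological spaces. Theorem~\ref{thm:log-metric-background}, applied with $\kappa_0$, says that $d^{X,\kappa_0}_{\go,\theta}$ induces the topology of $\partial_{\kappa_0}X$. This is the usual topology on $\partial X$ by Proposition~\ref{prop:visual-kappa-homeomorphism}.

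Second, I apply Proposition~\ref{prop:relationship_p_r} with $\kappa_0$ to obtain $P\le r^{\kappa_0}_\go(\bfa,\bfb)\le P+2C$, where $P=(\bfa\mid\bfb)_\go$. Corollary~\ref{cor:log-metric-gromov-product} then gives the displayed comparison $d^{X,\kappa_0}_{\go,\theta}(\bfa,\bfb)\asymp(1+P)^{-\theta}$. The constants depend only on $\delta$, $\theta$ and the log-metric construction.

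Third, for an arbitrary sublinear $\kappa$, Corollary~\ref{cor:log-metric-gromov-product} gives $d^{X,\kappa}_{\go,\theta}(\bfa,\bfb)\asymp(1+P)^{-\theta}$ as well. Both metrics are therefore comparable to the same quantity, and their ratio is bounded above and below by constants depending on $\delta,\kappa,\theta$. This bound is exactly the statement that the identity is bi-Lipschitz. The case $\bfa=\bfb$ is trivial.

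The only genuine point to check is that nothing in the proof of Proposition~\ref{prop:relationship_p_r} used strict growth of $\kappa$. In that proof, the threshold $S_0$ with $\sM(1,0)<S/(24\kappa(S))$ exists because $S/\kappa_0(S)=S\to\infty$. The threshold $R_0$ with $M\kappa(t)+2\delta\le t/2$ exists for the same reason. Lemma~\ref{Lem:kappa_basic} holds trivially for a constant function. The other place to watch is the definition of the neighbourhoods $\calU_{\kappa_0}$: the cutoff $r/(24\kappa_0(r))=r/24$ is still a proper increasing function of $r$. So I expect no real obstacle beyond making sure the constant function is admitted as a sublinear function, which Definition~\ref{def:kappa} allows.
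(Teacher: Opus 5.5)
Your proposal is correct and follows the paper's proof. The paper also gets the topology and the comparison from Theorem~\ref{thm:log-metric-background} with $\kappa=\kappa_0$ and Corollary~\ref{cor:log-metric-gromov-product}, then deduces bi-Lipschitz equivalence since both $d^{X,\kappa_0}_{\go,\theta}$ and $d^{X,\kappa}_{\go,\theta}$ are comparable to $(1+(\bfa\mid\bfb)_\go)^{-\theta}$; your check that the constant function $\kappa_0$ passes through the proof of Proposition~\ref{prop:relationship_p_r} is a detail the paper leaves implicit.
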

\begin{proof}
The first assertion follows from
Theorem~\ref{thm:log-metric-background}, applied with
$\kappa=\kappa_0$, together with
Corollary~\ref{cor:log-metric-gromov-product}.
For an arbitrary sublinear function $\kappa$, both
$d^{X,\kappa_0}_{\go,\theta}$ and
$d^{X,\kappa}_{\go,\theta}$ are comparable to
$
\bigl(1+(\bfa\mid\bfb)_\go\bigr)^{-\theta}.
$
Therefore, the identity map between the two metric spaces is
bi-Lipschitz.
\end{proof}

\begin{corollary}
\label{cor:change-geodesic-section}
Let $\sigma$ and $\tau$ be two choices of based geodesic
representatives of the points of $\partial X$. Denote by
$r_\go^\sigma$ and $r_\go^\tau$ the corresponding symmetric
$\kappa$-fellow-traveling radii. Then there exists
$B=B(\delta,\kappa)\ge 1$ such that, for all distinct
$\bfa,\bfb\in\partial X$,
\[
B^{-1}\bigl(1+r_\go^\tau(\bfa,\bfb)\bigr)
\le 
1+r_\go^\sigma(\bfa,\bfb)
\le 
B\bigl(1+r_\go^\tau(\bfa,\bfb)\bigr).
\]
Consequently, if
$d^\sigma_{\mathrm{log}}$ and
$d^\tau_{\mathrm{log}}$ are the corresponding logarithmic
metrics, constructed using the same logarithmic base $\lbase$ and
the same admissible exponent $\theta$, then
$
\id:
(\bdy X,d^\sigma_{\mathrm{log}})
\longrightarrow
(\bdy X,d^\tau_{\mathrm{log}})
$
is bi-Lipschitz.
\end{corollary}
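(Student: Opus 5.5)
The idea is that the Gromov product $(\bfa\mid\bfb)_\go$ does not depend on any choice of geodesic representatives. I would therefore run the proof of Proposition~\ref{prop:relationship_p_r} separately for each section $\sigma$ and $\tau$, and compare both radii to this common quantity.

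First I check that the argument in Proposition~\ref{prop:relationship_p_r} never used which geodesic representatives $\alpha_0\in\bfa$ and $\beta_0\in\bfb$ were fixed. In the lower bound, the representatives enter only through Lemma~\ref{lem:GP_basic} and the Morse lemma with the uniform gauge $\sM$. Lemma~\ref{lem:GP_basic} holds for arbitrary geodesic representatives, and its constants depend only on $\delta$. In the upper bound, they enter only through Lemma~\ref{lem:gp_basic_2}, which also holds for arbitrary representatives. The neighborhoods $\calU_\kappa(\alpha_0,\rr)$ are defined with the common gauge $\sM$ fixed in Remark~\ref{Rem:uniform-morse-gauge}, independent of the representative. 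Hence, for each section $\sigma$, the same constant $C=C(\delta,\kappa)$ gives
\[
P\le r_\go^\sigma(\bfa,\bfb)\le P+C\kappa(P)+C, \qquad P=(\bfa\mid\bfb)_\go,
\]
and the same inequality holds for $\tau$.

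Next, exactly as in Corollary~\ref{cor:one-plus-r-one-plus-gromov-product}, I use the bound $\kappa(P)\le L(1+P)$. This gives $1+P\le 1+r_\go^\sigma\le C'(1+P)$, and likewise for $\tau$. Chaining the two comparisons through $1+P$ gives the claimed two-sided bound with $B=C'$.

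For the metric statement, I apply Theorem~\ref{thm:log-metric-background} to each section. It gives $d^\sigma_{\mathrm{log}}\asymp(1+r^\sigma_\go)^{-\theta}$ and $d^\tau_{\mathrm{log}}\asymp(1+r^\tau_\go)^{-\theta}$. Raising the previous comparison to the power $-\theta$ shows that the two metrics are comparable up to a factor $B^\theta$ times the construction constants, so the identity map is bi-Lipschitz.

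The only step needing care is confirming that the constants in Theorem~\ref{thm:log-metric-background} do not depend on the section. They depend only on $\lbase$ and $\theta$, because the chain construction of \cite[Proposition~14.5]{Heinonen2001LecturesonAnalysis} and the quasi-metric constant $K=2\lbase$ of Proposition~\ref{prop:log-quasimetric-from-gjq} involve nothing else. I would verify this explicitly; I expect no real obstacle there.
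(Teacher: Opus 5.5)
Your proposal is correct and follows the paper's proof essentially step for step. You sandwich both radii between $P$ and $P+C\kappa(P)+C$ using the representative-independent argument of Proposition~\ref{prop:relationship_p_r} with the common gauge $\sM$, use $\kappa(P)\le L(1+P)$ to chain $1+r^\sigma_\go\asymp 1+P\asymp 1+r^\tau_\go$, and transfer this to the metrics via $d^\nu_{\mathrm{log}}\asymp(1+r^\nu_\go)^{-\theta}$. The one check you flag, that the constants of Theorem~\ref{thm:log-metric-background} do not depend on the section, is unnecessary: only two sections are involved, so you may simply take the smaller lower comparison constant and the larger upper one, which is what the paper does.
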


\begin{proof}
Let $P=(\bfa\mid\bfb)_\go$. The proof of Proposition~\ref{prop:relationship_p_r} applies to every
choice of based geodesic representatives. Moreover, its constant is
independent of that choice, since every geodesic center is equipped
with the common gauge $\sM$. Hence there exists
$C=C(\delta,\kappa)\ge 1$ such that
\begin{align*}
P
&\le 
r_\go^\sigma(\bfa,\bfb)
\le 
P+C\kappa(P)+C, \text{ and }\\
P
&\le 
r_\go^\tau(\bfa,\bfb)
\le 
P+C\kappa(P)+C.
\end{align*}

Moreover, since $\kappa$ is sublinear,
$
\sup_{t\ge 0}\frac{\kappa(t)}{1+t}
<
\infty.
$
Writing $L_\kappa = \sup_{t\ge 0}\frac{\kappa(t)}{1+t}$, we obtain,
\[
\begin{aligned}
1+r_\go^\sigma(\bfa,\bfb)
&\le 
1+P+C\kappa(P)+C  \\
&\le 
\bigl(1+CL_\kappa+C\bigr)(1+P)  \\
&\le 
\bigl(1+CL_\kappa+C\bigr)
\bigl(1+r_\go^\tau(\bfa,\bfb)\bigr).
\end{aligned}
\]
Interchanging $\sigma$ and $\tau$ proves the first assertion, with $B=1+CL_\kappa+C$.

Next, we choose constants
$0<c_{\log}\le  C_{\log}<\infty$ such that, for
$\nu\in\{\sigma,\tau\}$,
\[
c_{\log}
\bigl(1+r_\go^\nu(\bfa,\bfb)\bigr)^{-\theta}
\le 
d^\nu_{\mathrm{log}}(\bfa,\bfb)
\le 
C_{\log}
\bigl(1+r_\go^\nu(\bfa,\bfb)\bigr)^{-\theta}.
\]
The comparison of the two radii gives
\[
\frac{c_{\log}}{C_{\log}}B^{-\theta}
d^\tau_{\mathrm{log}}(\bfa,\bfb)
\le 
d^\sigma_{\mathrm{log}}(\bfa,\bfb)
\le 
\frac{C_{\log}}{c_{\log}}B^\theta
d^\tau_{\mathrm{log}}(\bfa,\bfb).
\]
\end{proof}

Thus the bi-Lipschitz class of $d_{\log}$ is independent of the choice
of geodesic representatives. Accordingly, we suppress this choice from
the notation throughout the remainder of the paper.

\begin{corollary}
\label{cor:visual-log-comparison}
Let $d_{\mathrm{vis}}$ be a visual metric on $\bdy X$ with parameter
$\varepsilon>0$, and let $d_{\mathrm{log}}$ be a log metric with admissible exponent $\theta>0$. Then there exists a constant $B>1$ such that for all
distinct $\bfa,\bfb\in \bdy X$,
\[
d_{\mathrm{log}}(\bfa,\bfb)
\asymp
\frac{1}
{\bigl(1+\log(B/d_{\mathrm{vis}}(\bfa,\bfb))\bigr)^\theta}.
\]
\end{corollary}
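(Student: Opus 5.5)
The plan is to combine Corollary~\ref{cor:log-metric-gromov-product} with the defining estimate of a visual metric (Definition~\ref{Def:visual-metric}). This reduces the statement to an elementary comparison between $1+P$ and $1+\log(B/d_{\mathrm{vis}})$, where $P=(\bfa\mid\bfb)_\go$.

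First I will fix $c_{\mathrm{vis}},C_{\mathrm{vis}}$ with
\[
c_{\mathrm{vis}}e^{-\varepsilon P}\le d_{\mathrm{vis}}(\bfa,\bfb)\le C_{\mathrm{vis}}e^{-\varepsilon P}.
\]
I will choose $B>\max\{1,C_{\mathrm{vis}}\}$ so that $\log(B/d_{\mathrm{vis}})$ is positive for all distinct pairs. Taking logarithms gives
\[
\log(B/C_{\mathrm{vis}})+\varepsilon P\;\le\;\log\bigl(B/d_{\mathrm{vis}}(\bfa,\bfb)\bigr)\;\le\;\log(B/c_{\mathrm{vis}})+\varepsilon P.
\]
Both additive constants are fixed numbers, and the lower one is positive by the choice of $B$.

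Next I will show that
\[
1+\log\bigl(B/d_{\mathrm{vis}}\bigr)\asymp 1+P .
\]
For the upper bound,
\[
1+\log(B/c_{\mathrm{vis}})+\varepsilon P\le \max\{1+\log(B/c_{\mathrm{vis}}),\varepsilon\}(1+P).
\]
For the lower bound,
\[
1+\log(B/d_{\mathrm{vis}})\ge 1+\varepsilon P\ge \min\{1,\varepsilon\}(1+P),
\]
where I use $\log(B/C_{\mathrm{vis}})>0$. Raising to the power $\theta$ preserves comparability, with constants raised to the power $\theta$. Then Corollary~\ref{cor:log-metric-gromov-product} gives $d_{\mathrm{log}}\asymp(1+P)^{-\theta}\asymp(1+\log(B/d_{\mathrm{vis}}))^{-\theta}$.

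The only point needing care is the choice of $B$. It must be large enough that $\log(B/d_{\mathrm{vis}})$ is bounded below by a positive constant, so the lower comparison does not degenerate when $P$ is near $0$. Taking $B>C_{\mathrm{vis}}$ handles this. Equivalently, one could take $B>\operatorname{diam}(\partial X, d_{\mathrm{vis}})$, which is finite since $\partial X$ is compact. With $B$ fixed this way, the comparison constants depend only on $\varepsilon$, $c_{\mathrm{vis}}$, $C_{\mathrm{vis}}$, $\theta$ and the constants of Corollary~\ref{cor:log-metric-gromov-product}.
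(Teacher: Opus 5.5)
Your proposal is correct and follows the paper's proof essentially verbatim. You fix the visual constants, choose $B>C_{\mathrm{vis}}$ (also ensuring $B>1$, as the statement asks), take logarithms to get $1+\log(B/d_{\mathrm{vis}})\asymp 1+P$, and combine this with Corollary~\ref{cor:log-metric-gromov-product}. One small caveat: your closing aside that $B>\operatorname{diam}(\partial X,d_{\mathrm{vis}})$ works ``equivalently'' is true, but it needs a slightly different lower-bound argument, since then $\log(B/C_{\mathrm{vis}})$ may be negative.
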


\begin{proof}
Let $P=(\bfa\mid\bfb)_\go$. Since $d_{\mathrm{vis}}$ is a visual metric with parameter $\varepsilon$, there
exist constants $c_{\mathrm{vis}},C_{\mathrm{vis}}>0$ such that
\[
c_{\mathrm{vis}}e^{-\varepsilon P}
\le 
d_{\mathrm{vis}}(\bfa,\bfb)
\le 
C_{\mathrm{vis}}e^{-\varepsilon P}.
\]
Choose $B>C_{\mathrm{vis}}$. Then
\[
\log\left(\frac{B}{C_{\mathrm{vis}}}\right)+\varepsilon P
\le 
\log\left(\frac{B}{d_{\mathrm{vis}}(\bfa,\bfb)}\right)
\le 
\log\left(\frac{B}{c_{\mathrm{vis}}}\right)+\varepsilon P.
\]
Therefore,
\[
1+\log\left(\frac{B}{d_{\mathrm{vis}}(\bfa,\bfb)}\right)
\asymp
1+P.
\]
By Corollary~\ref{cor:log-metric-gromov-product},
\[
d_{\mathrm{log}}(\bfa,\bfb)
\asymp
\frac{1}{(1+P)^\theta}.
\]
Combining the two estimates yields the result.
\end{proof}

\section{Quasi-isometry implies bi-Lipschitz equivalence}
\label{S:qi-invariance}

We now prove the functorial property of the logarithmic metric. Visual
metrics on the boundary of a hyperbolic space are usually preserved by
quasi-isometries only up to quasisymmetry or quasi-M\"obius equivalence. The
logarithmic metric has stronger functoriality: once the same snowflaking
exponent is chosen on both boundaries, quasi-isometries induce bi-Lipschitz boundary maps. Moreover, the choice of snowflaking exponent does not depend on any property of the underlying space; it is, in fact, dependent only on the choice of base $\lbase$.

Throughout this section, $\theta>0$ denotes an admissible exponent and $\kappa$ is a fixed sublinear function. For a based hyperbolic space $(X,\go)$, we write $d^X_{\go,\theta}$ for its logarithmic metric, in accordance with the standing convention of Section~\ref{S:background}.

\begin{theorem}
\label{thm:qi_implies_bl}
Let $X$ and $Y$ be proper geodesic hyperbolic spaces, and let
$\Phi\colon X\to Y$ be a $(q,Q)$-quasi-isometric embedding.
Fix $\go\in X$ and let $\go'=\Phi(\go)$. 

Then the induced boundary map
\[
\Phi_*\colon
(\bdy X,d^X_{\go,\theta})
\longrightarrow
(\bdy Y,d^Y_{\go',\theta})
\]
is a bi-Lipschitz embedding. In particular, it is bi-Lipschitz onto its
image. If $\Phi$ is a quasi-isometry, then $\Phi_*$ is surjective and
hence is a bi-Lipschitz equivalence.
\end{theorem}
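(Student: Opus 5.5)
The argument reduces everything to the Gromov product, using Corollary~\ref{cor:log-metric-gromov-product} on both sides and Lemma~\ref{lem:QI-Gromov-product} to compare the products. Write $P=(\bfa\mid\bfb)_\go$ and $P'=(\Phi_*\bfa\mid\Phi_*\bfb)_{\go'}$ for distinct $\bfa,\bfb\in\bdy X$. Lemma~\ref{lem:QI-Gromov-product} shows that $\Phi_*$ is a topological embedding, hence injective, so $\Phi_*\bfa\neq\Phi_*\bfb$. It also supplies constants $A\ge1$ and $B\ge0$, depending only on $q,Q$ and the hyperbolicity constants, with
\[
\tfrac1A P-B\le P'\le AP+B .
\]
Corollary~\ref{cor:log-metric-gromov-product}, applied in $X$ with basepoint $\go$ and in $Y$ with basepoint $\go'$ using the same $\kappa$, base $2$ and exponent $\theta$, gives
\[
d^X_{\go,\theta}(\bfa,\bfb)\asymp(1+P)^{-\theta}
\qquad\text{and}\qquad
d^Y_{\go',\theta}(\Phi_*\bfa,\Phi_*\bfb)\asymp(1+P')^{-\theta}.
\]
So it suffices to prove $1+P'\asymp 1+P$ with multiplicative constants. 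The bi-Lipschitz bound then follows with constant roughly $C^{\theta}$ times the two comparison constants.

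The upper bound is immediate: $1+P'\le 1+AP+B\le (A+B)(1+P)$.

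The lower bound is the step needing care, because the additive error $B$ can make $\tfrac1A P-B$ negative when $P$ is small. I would split into two cases.

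1. If $P\ge 2AB$, then $P'\ge \tfrac1{2A}P$, so $1+P'\ge \tfrac1{2A}(1+P)$.
2. If $P<2AB$, then $1+P< 1+2AB\le (1+2AB)(1+P')$, since $P'\ge0$. Nonnegativity holds because the boundary Gromov product is a supremum of liminfs of nonnegative quantities.

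In both cases $1+P\le \max\{2A,\,1+2AB\}(1+P')$. This gives $1+P'\asymp 1+P$ and hence the bi-Lipschitz embedding.

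If $\Phi$ is a quasi-isometry, Lemma~\ref{lem:QI-Gromov-product} also gives surjectivity of $\Phi_*$, so $\Phi_*$ is a bi-Lipschitz equivalence. The only real subtlety is that the proposition requires the basepoint $\go'=\Phi(\go)$ and the same admissible $\theta$ on both sides. Changing the basepoint in $Y$ is a separate matter, to be handled via the basepoint-change corollary mentioned after Theorem~\ref{thm:B}, and is not needed here.
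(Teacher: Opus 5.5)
Your proposal is correct and follows the paper's proof essentially step for step: you compare Gromov products via Lemma~\ref{lem:QI-Gromov-product}, upgrade this to $1+P'\asymp 1+P$, and transfer the comparison to the log metrics through Corollary~\ref{cor:log-metric-gromov-product} on both sides, with injectivity and surjectivity of $\Phi_*$ supplied by the same lemma. The only cosmetic difference is that the paper skips your case split by rewriting the lower bound as $P\le AP'+AB$, which together with $P'\ge 0$ gives $1+P\le (1+A+AB)(1+P')$ directly.
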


\begin{proof}
For $\bfa, \bfb \in \bdy X$, let
$
p=(\bfa\mid\bfb)_{\go}$, and $p'=
\bigl(\Phi_*(\bfa)\mid\Phi_*(\bfb)\bigr)_{\go'}$.
By Lemma~\ref{lem:QI-Gromov-product}, there exist $A\ge 1$ and $B\ge 0$
such that
\[
\frac1A p-B\le  p'\le  Ap+B.
\]
Consequently, for some $M\ge 1$,
\[
M^{-1}(1+p)\le 1+p'\le  M(1+p).
\]
Indeed, one may take
$
M=\max\{A+B+1,A+AB+1\}$. By Corollary~\ref{cor:log-metric-gromov-product},
\begin{align*}
d^X_{\go,\theta}(\bfa,\bfb)
&\asymp(1+p)^{-\theta},
\quad\text{ and }\\
d^Y_{\go',\theta}
\bigl(\Phi_*(\bfa),\Phi_*(\bfb)\bigr)
&\asymp(1+p')^{-\theta}.
\end{align*}

The comparison between $1+p$ and $1+p'$ therefore gives a constant
$L\ge 1$ such that
\[
L^{-1}d^X_{\go,\theta}(\bfa,\bfb)
\le 
d^Y_{\go',\theta}
\bigl(\Phi_*(\bfa),\Phi_*(\bfb)\bigr)
\le 
L\,d^X_{\go,\theta}(\bfa,\bfb).
\]
Since $\Phi_*$ is injective, it is a bi-Lipschitz embedding.

If $\Phi$ is a quasi-isometry, its induced boundary map is surjective,
so it is a bi-Lipschitz equivalence.
\end{proof}
By Theorem~\ref{thm:qi_implies_bl}, if $G$ and $H$ are quasi-isometric hyperbolic groups, then their log boundaries are bi-Lipschitz equivalent. In particular, changing the finite generating set of a hyperbolic group changes the associated log metric only up to bi-Lipschitz equivalence.

Next, we record how the change of auxiliary constants changes the log metric. Since the change of basepoint is a quasi-isometry, therefore, we immediately obtain the following Corollary.

\begin{corollary}
\label{prop:change-basepoint-log}
Let $X$ be a proper geodesic $\delta$-hyperbolic space, and let
$\go,\go'\in X$. Let $\theta>0$ be an admissible snowflaking exponent. Then the
identity map
$
\id:
(\bdy X,d^{X}_{\go,\theta})
\longrightarrow
(\bdy X,d^{X}_{\go',\theta})
$ is bi-Lipschitz.
\end{corollary}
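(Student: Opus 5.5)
The plan is to compare both metrics with Gromov products. By Corollary~\ref{cor:log-metric-gromov-product}, applied once with basepoint $\go$ and once with basepoint $\go'$, we have
\[
d^X_{\go,\theta}(\bfa,\bfb)\asymp\bigl(1+(\bfa\mid\bfb)_\go\bigr)^{-\theta}
\quad\text{and}\quad
d^X_{\go',\theta}(\bfa,\bfb)\asymp\bigl(1+(\bfa\mid\bfb)_{\go'}\bigr)^{-\theta}
\]
for all distinct $\bfa,\bfb\in\bdy X$. The constants depend only on $\delta$, $\kappa$, $\theta$ and the log-metric construction, and not on the basepoint. So it suffices to show that $1+(\bfa\mid\bfb)_\go\asymp 1+(\bfa\mid\bfb)_{\go'}$.

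The most direct route is to apply Theorem~\ref{thm:qi_implies_bl} to the identity map $\Phi=\id\colon X\to X$. This is a $(1,0)$-quasi-isometry, and its induced boundary map is the identity on $\bdy X$. However, Theorem~\ref{thm:qi_implies_bl} is stated with target basepoint $\Phi(\go)=\go$, so it does not by itself move the basepoint. I will therefore prove the needed comparison directly from the definition of the boundary Gromov product.

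For any $x,y\in X$, the triangle inequality gives
\[
|(x\mid y)_\go-(x\mid y)_{\go'}|\le d(\go,\go').
\]
Indeed, each of the three terms $d(x,\go)$, $d(y,\go)$, $d(x,y)$ changes by at most $d(\go,\go')$, and the $d(x,y)$ term does not change at all. Taking $\liminf$ over sequences $x_i\to\bfa$ and $y_j\to\bfb$, and then the supremum in Definition~\ref{Def:boundary-gromov-product}, yields
\[
|(\bfa\mid\bfb)_\go-(\bfa\mid\bfb)_{\go'}|\le d(\go,\go').
\]
The class of admissible sequences converging to $\bfa$ and $\bfb$ is the same for both basepoints, since convergence in $X\cup\vis X$ does not depend on the basepoint. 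Writing $D=d(\go,\go')$, it follows that
\[
1+(\bfa\mid\bfb)_{\go'}\le (1+D)\bigl(1+(\bfa\mid\bfb)_\go\bigr),
\]
and the same holds symmetrically.

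Raising this comparison to the power $-\theta$ and combining with the two asymptotic equalities above gives a bi-Lipschitz constant of the form $L=C_1C_2(1+D)^\theta$. This completes the proof.

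The only delicate point is bookkeeping rather than mathematics. First, we need the constants in Corollary~\ref{cor:log-metric-gromov-product} to be independent of the basepoint; this holds because the uniform gauge $\sM$ depends only on $\delta$. Second, we use the identification of $\bdy X$ defined via rays from $\go$ with that defined via rays from $\go'$. This is canonical, since rays based at different points that are at finite Hausdorff distance define the same class.
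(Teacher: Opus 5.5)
Your argument is correct, but it follows a different route from the paper's. The paper gets the corollary in one line from Theorem~\ref{thm:qi_implies_bl}, treating the change of basepoint as a quasi-isometry. Your objection that $\Phi=\id$ does not move the basepoint is easily fixed. Let $\Phi$ send $\go$ to $\go'$ and fix every other point. This is a $(1,d(\go,\go'))$-quasi-isometry at bounded distance from the identity, so it induces the identity on $\bdy X$ and satisfies $\Phi(\go)=\go'$, which is presumably what the paper intends.

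That route passes through V\"ais\"al\"a's estimate, Lemma~\ref{lem:QI-Gromov-product}. It only gives a multiplicative-additive comparison $\frac1A p-B\le p'\le Ap+B$, with constants depending on the quasi-isometry and hyperbolicity data. You replace that lemma by the elementary bound
\[
\bigl|(\bfa\mid\bfb)_\go-(\bfa\mid\bfb)_{\go'}\bigr|\le d(\go,\go').
\]
This is self-contained and purely additive, hence sharper. It also yields the explicit bi-Lipschitz constant $C_1C_2\bigl(1+d(\go,\go')\bigr)^\theta$, showing how the distortion grows with the distance between basepoints. The cost is only that you redo by hand a special case the general theorem already covers. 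Both proofs rest on Corollary~\ref{cor:log-metric-gromov-product} applied at the two basepoints.

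One small remark: you do not need the constants in that corollary to be independent of the basepoint. Only two fixed basepoints are involved, and the bi-Lipschitz constant may depend on them.
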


\begin{corollary}
\label{cor:qi-different-exponents-qs}
Let $X$, $Y$, $\Phi$, $\go$, and $\go'$ be as in
Theorem~\ref{thm:qi_implies_bl}. Let $\theta,\theta'>0$ be admissible snowflaking exponents. Then $\Phi_*: (\bdy X,d^X_{\go,\theta})
\longrightarrow (\bdy Y,d^Y_{\go',\theta'})$ is quasisymmetric.
\end{corollary}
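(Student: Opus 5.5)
The plan is to factor $\Phi_*$ through an identity map on $\bdy Y$ that changes only the exponent:
\[
(\bdy X,d^X_{\go,\theta})\xrightarrow{\;\Phi_*\;}(\bdy Y,d^Y_{\go',\theta})\xrightarrow{\;\id\;}(\bdy Y,d^Y_{\go',\theta'}).
\]
By Theorem~\ref{thm:qi_implies_bl}, the first map is bi-Lipschitz, hence $\eta$-quasisymmetric with $\eta(t)=L^2t$. Since compositions of quasisymmetric maps are quasisymmetric, it remains to show the second map is quasisymmetric. One could also quote \cite[Proposition~3.16]{gjq}, which handles changes of exponent, but I would rather derive it directly from Corollary~\ref{cor:log-metric-gromov-product}; that makes the argument self-contained and shows the distortion is of power type.

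Write $D=d^Y_{\go',\theta}$, $D'=d^Y_{\go',\theta'}$, and $P(\bfa,\bfb)=(\bfa\mid\bfb)_{\go'}$. Corollary~\ref{cor:log-metric-gromov-product} gives constants $c\ge1$ with
\[
D\asymp_c (1+P)^{-\theta},\qquad D'\asymp_c (1+P)^{-\theta'} .
\]
Set $u=1+P$, so that $D\asymp u^{-\theta}$ and $D'\asymp u^{-\theta'}\asymp (u^{-\theta})^{\theta'/\theta}$. Consequently there is $C\ge1$ with
\[
C^{-1}D^{\theta'/\theta}\le D'\le C\,D^{\theta'/\theta}
\]
for all distinct pairs, and trivially for equal pairs. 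Thus $D'$ is bi-Lipschitz equivalent to the snowflake $D^{p}$ with $p=\theta'/\theta$.

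For a triple $\bfa,\bfb,\bfc$ of distinct points we then get
\[
\frac{D'(\bfa,\bfb)}{D'(\bfa,\bfc)}\le C^2\Bigl(\frac{D(\bfa,\bfb)}{D(\bfa,\bfc)}\Bigr)^{p},
\]
so the identity is $\eta'$-quasisymmetric with $\eta'(t)=C^2t^{p}$, which is a homeomorphism of $[0,\infty)$. Composing with $\Phi_*$ gives quasisymmetry with $\eta'\circ\eta(t)=C^2L^{2p}t^{p}$. The map $\Phi_*$ is a homeomorphism onto its image by Lemma~\ref{lem:QI-Gromov-product}, and is onto when $\Phi$ is a quasi-isometry.

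None of these steps is a real obstacle. The only point needing care is that $\Phi$ in Theorem~\ref{thm:qi_implies_bl} may be only a quasi-isometric embedding. In that case the conclusion should be read as quasisymmetry onto the image, since the definition of quasisymmetry above requires a homeomorphism. The constants depend only on $\theta$, $\theta'$, the hyperbolicity data, and $(q,Q)$.
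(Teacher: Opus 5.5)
Your proposal is correct and uses the same factorization as the paper: $\Phi_*$ into $(\partial Y,d^Y_{\go',\theta})$, which is bi-Lipschitz by Theorem~\ref{thm:qi_implies_bl}, followed by the exponent-changing identity on $\partial Y$. The only difference is the second step. The paper cites \cite[Proposition~3.17]{gjq} for the power-type quasisymmetry of that identity, whereas you derive $d^Y_{\go',\theta'}\asymp (d^Y_{\go',\theta})^{\theta'/\theta}$ from Corollary~\ref{cor:log-metric-gromov-product}; this also follows directly from $d_{\go,\lbase,\theta}\asymp\rho_{\go,\lbase}^{\theta}$ in Theorem~\ref{thm:log-metric-background}, without using the Gromov product. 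Your version has the advantage of giving the explicit distortion $\eta(t)=C^2L^{2\theta'/\theta}t^{\theta'/\theta}$.
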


\begin{proof}
By Theorem~\ref{thm:qi_implies_bl} and Corollary~\ref{prop:change-basepoint-log}, $\Phi_*:
(\bdy X,d^X_{\go,\theta})
\longrightarrow
(\bdy Y,d^Y_{\go',\theta})$ is bi-Lipschitz. By \cite[Proposition~3.17]{gjq}, the identity map
$
    \id:
    (\bdy Y,d^Y_{\go',\theta})
    \longrightarrow
    (\bdy Y,d^Y_{\go',\theta'})$
is quasisymmetric with power-type distortion. Therefore, the composition
\[
(\bdy X,d^X_{\go,\theta})
\xrightarrow{\ \Phi_*\ }
(\bdy Y,d^Y_{\go',\theta})
\xrightarrow{\ \id\ }
(\bdy Y,d^Y_{\go',\theta'})
\]
is quasisymmetric.
\end{proof}

Moreover, we emphasize that the induced bi-Lipschitz map on the boundaries equipped with log metric is not merely an isometry or a similarity, that is, the map is a genuine bi-Lipschitz map. We illustrate this fact by the example below.
\begin{example}
    Let $\FF_2 = \langle a,b\rangle$ be a free group on two generators $a,b$, equipped with the word metric $d$ associated to $\{a^{\pm1},b^{\pm1}\}$. Consider the Neilsen automorphism $$\Phi: (\FF_2, d) \longrightarrow (\FF_2, d), \quad  \Phi(a) = ab, \quad \Phi(b) = b.$$
    Its inverse is given by $\Phi^{-1}(a)=ab^{-1}$ and $\Phi^{-1}(b)=b$. 
    
    Since the images of the generators under both $\Phi$ and $\Phi^{-1}$ have word length at most $2$, we have
    \[
        \frac{1}{2}d(g,h)
        \le 
        d\bigl(\Phi(g),\Phi(h)\bigr)
        \le 
        2d(g,h)
    \]
    for all $g,h\in \FF_2$. Thus $\Phi$ is a quasi-isometry. It is not an
    isometry of the standard Cayley graph, since $d(e,a)=1$ but $d\bigl(e,\Phi(a)\bigr)=d(e,ab)=2$.

    Let $\bdy \FF_2$ denote the visual boundary comprised of infinite freely reduced words. For distinct $\xi,\eta\in\partial \FF_2$, let 
    \[
    N(\xi,\eta)
    =
    \max\{m\ge  0:\xi_1\cdots\xi_m=\eta_1\cdots\eta_m\}
    \]
    denote their common-prefix length. Then $N(\xi,\eta)=(\xi\mid\eta)_e$. Consequently, fixing $\theta >0$, we obtain  
    \[
    d_{\mathrm{log}}(\xi,\eta)
    =
    \begin{cases}
    0,&\xi=\eta,\\[1mm]
    \bigl(1+N(\xi,\eta)\bigr)^{-\theta},&\xi\neq\eta,
    \end{cases}
\]

    Let $\Phi_*: (\bdy \FF_2, d_{\mathrm{log}}) \longrightarrow (\bdy \FF_2, d_{\mathrm{log}})$ be the induced boundary bi-Lipschitz homeomorphism from Theorem~\ref{thm:qi_implies_bl}. Clearly, $\Phi_*$ is not an identity since $\Phi_*(a^\infty)=(ab)^\infty\neq a^\infty$.

    We claim that $\Phi_*$ is not an isometry.
    To see this, let $\xi = a^{\infty}$ and $\eta_n = a^n b^{\infty}$. Then $N(\xi, \eta_n) = n$. On the other hand, $\Phi_*(\xi) = (ab)^{\infty}$ and $\Phi_*(\eta_n) = (ab)^n b^{\infty}$. Then $N(\Phi_*(\xi), \Phi_*(\eta_n))= 2n$. Therefore,
    \[
        \frac{d_{\mathrm{log}}(\Phi_*(\xi), \Phi_*(\eta_n))}{d_{\mathrm{log}}(\xi, \eta_n)} = \left(\frac{n+1}{2n+1}\right)^{\theta} \longrightarrow 2^{-\theta} \quad \text{ as } n\to \infty
    \]
    Thus $\Phi_*$ shrinks arbitrarily small distances by at least a factor $2^{-\theta}$. 
    
    The same map also stretches arbitrarily small distances. 
    Let $\xi = (ab^{-1})^{\infty}$ and $\eta_n =(ab^{-1})^n (b^{-1})^\infty$
    Then $N(\xi, \eta_n)= 2n$. Since $\Phi(ab^{-1}) = abb^{-1} = a$, we obtain $\Phi_*(\xi) =a^{\infty}$ and 
    $\Phi_*(\eta_n) =a^{n}(b^{-1})^{\infty}$. It follows that $N(\Phi_*(\xi), \Phi_*(\eta_n))= n$, and hence,
    \[
        \frac{d_{\mathrm{log}}(\Phi_*(\xi), \Phi_*(\eta_n))}{d_{\mathrm{log}}(\xi, \eta_n)} = \left(\frac{2n+1}{n+1}\right)^{\theta} \longrightarrow 2^{\theta} \quad \text{ as } n\to \infty
    \]

Thus $\Phi_*$ both stretches and shrinks distances. In particular, it is neither an isometry nor a similarity, and every bi-Lipschitz constant for $\Phi_*$ is at least $2^\theta$.
\end{example}

\subsection{Quasiconvexity}
Geometry of subgroups of a $\delta$-hyperbolic group gives a lot of coarse information about the group itself. One such class of subgroups is that of quasiconvex subgroups. As an application, we obtain a logarithmic-metric characterization of quasiconvex subgroups.

Recall that if $G$ is a word-hyperbolic group with finite generating set $S$,
a subgroup $H\le  G$ is called \emph{{quasiconvex}} if there exists $R\ge  0$
such that every geodesic in the Cayley graph $\Gamma(G,S)$ of $G$ (with respect to the generating set $S$) with endpoints in
$H$ is contained in the $R$-neighborhood of $H$. 

This definition is independent
of the choice of finite generating set. If $H$ is finitely generated, then
$H$ is quasiconvex in $G$ if and only if it is quasi-isometrically embedded; see \cite[III.$\Gamma$.3]{book:BridsonHaefliger}.

We can also phrase quasiconvexity in terms of Cannon-Thurston maps. Suppose now
that $H\le  G$ is a finitely generated word-hyperbolic subgroup and that the
inclusion $i:H\hookrightarrow G$ admits a Cannon-Thurston map, that is, a
continuous extension
$
\widehat{i}:H\cup\partial H\longrightarrow G\cup\partial G
$ whose restriction to $H$ is the inclusion and whose boundary restriction is
$
\partial i:\partial H\longrightarrow \partial G.
$ Cannon-Thurston maps do not exist for all hyperbolic subgroups of hyperbolic groups; for instance, see \cite{noCTmap:bakerriley}.

\begin{corollary}
\label{cor:characterization_quasiconvexity}
Let $G$ be a word-hyperbolic group, and let $H\le  G$ be a finitely generated
word-hyperbolic subgroup. Suppose that the inclusion $i:H\hookrightarrow G$
admits a Cannon-Thurston map
$\partial i:\partial H\longrightarrow \partial G.$
Let $\theta>0$ be a common admissible exponent for the log metrics on
$\partial H$ and $\partial G$. Then the following are equivalent:

\begin{enumerate}
    \item $H$ is quasiconvex in $G$.
    \item The Cannon-Thurston map
    $
    \partial i:
    (\partial H,d^H_{e,\theta})
    \longrightarrow
    (\partial G,d^G_{e,\theta})
    $
    is bi-Lipschitz onto its image.
    \item The Cannon-Thurston map $\partial i:\partial H\to\partial G$ is
    injective.
\end{enumerate}
\end{corollary}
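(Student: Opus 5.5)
We prove the cycle of implications (1)$\Rightarrow$(2)$\Rightarrow$(3)$\Rightarrow$(1).

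\emph{(1)$\Rightarrow$(2).} Fix finite generating sets and work in the Cayley graphs $\Gamma_H$ and $\Gamma_G$, which are proper geodesic hyperbolic spaces. If $H$ is quasiconvex, then by the cited fact from \cite[III.$\Gamma$.3]{book:BridsonHaefliger} the inclusion $i\colon\Gamma_H\to\Gamma_G$ (extended over edges) is a quasi-isometric embedding with $i(e)=e$. Theorem~\ref{thm:qi_implies_bl} then says that the induced map
\[
i_*\colon (\partial H,d^H_{e,\theta})\longrightarrow(\partial G,d^G_{e,\theta})
\]
is a bi-Lipschitz embedding. It remains to identify $i_*$ with $\partial i$. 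The map $i_*$ is defined by sending the class of a geodesic ray $\alpha$ in $H$ to the class of the quasi-geodesic $i\circ\alpha$. Take $x_n=\alpha(n)\to\bfa$ in $H\cup\partial H$. Since $\widehat i$ is continuous and restricts to $i$ on $H$, we get $\partial i(\bfa)=\lim_n i(x_n)=i_*(\bfa)$. Hence $\partial i=i_*$ is bi-Lipschitz onto its image.

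\emph{(2)$\Rightarrow$(3).} A bi-Lipschitz map is injective: if $\partial i(\bfa)=\partial i(\bfb)$, then $d^H_{e,\theta}(\bfa,\bfb)\le L\cdot 0=0$, so $\bfa=\bfb$.

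\emph{(3)$\Rightarrow$(1).} This is the main obstacle, and it is not a consequence of the log-metric machinery. It is the classical result, due to Mitra, that a Cannon--Thurston map for a hyperbolic subgroup is injective if and only if the subgroup is quasiconvex. I would cite it rather than reprove it. If a sketch is wanted, argue by contradiction. Suppose $H$ is not quasiconvex. Then there are $H$-geodesics whose $G$-geodesic counterparts have arbitrarily large distortion. Translating by elements of $H$ and using the criterion in Mitra's work for existence of Cannon--Thurston maps, one obtains, after passing to a limit, a bi-infinite $H$-geodesic whose two distinct endpoints in $\partial H$ are sent to the same point of $\partial G$. This contradicts injectivity.

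The key input in this sketch is that continuity of $\widehat i$ forces $H$-geodesic segments lying far from $e$ in $H$ to lie far from $e$ in $G$. Combined with non-quasiconvexity, this produces a pair of distinct points of $\partial H$ that are identified in $\partial G$. Everything else in the corollary is routine once Theorem~\ref{thm:qi_implies_bl} is available.
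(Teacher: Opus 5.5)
Your proposal is correct and follows essentially the same route as the paper: (1)$\Rightarrow$(2) applies Theorem~\ref{thm:qi_implies_bl} to the quasi-isometric embedding $i$ with $i(e)=e$, (2)$\Rightarrow$(3) is immediate, and (3)$\Rightarrow$(1) cites the Mitra/Mj result that an injective Cannon--Thurston map forces quasiconvexity; your identification of $i_*$ with $\partial i$ via continuity of $\widehat i$ and uniqueness of limits is more explicit than the paper's. The one difference is that the paper first handles elementary $H$ separately (finite or virtually cyclic $H$ is automatically quasiconvex, and $\partial H$ is empty or has two points), which you should add in a line in case the result you cite for (3)$\Rightarrow$(1) assumes $H$ non-elementary.
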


\begin{proof}
If $H$ is finite, then $\partial H=\varnothing$ and all three assertions
hold trivially. If $H$ is infinite and virtually cyclic, then $H$ is
quasiconvex in $G$ and $\partial H$ consists of two points. The
Cannon-Thurston map sends these points to the distinct attracting and
repelling fixed points of an infinite-order element of $H$. Hence it is
injective and bi-Lipschitz onto its image.

We may therefore assume that $H$ is non-elementary. If $H$ is quasiconvex in $G$, then
the inclusion $i:H\hookrightarrow G$ is a quasi-isometric embedding and hence, $\partial i:
    (\partial H,d^H_{e, \theta})
    \longrightarrow
    (\partial G,d^G_{e,\theta})$ induced by $i$ is a bi-Lipschitz map onto its image by Theorem~\ref{thm:qi_implies_bl}. Since the Cannon-Thurston map is the continuous boundary extension of the same inclusion, this boundary map is precisely $\partial i$. Hence $(1)$ implies $(2)$. The implication $(2)\Rightarrow(3)$ is immediate, since a bi-Lipschitz map onto its image is injective.
    Finally, assume that $\partial i$ is injective. By \cite[Proposition~2.13]{conicallimit:jeonkapovichleiningeroshika}, equivalently by \cite[Lemma~2.5]{Mahan:commensurators}, $H$ is quasiconvex in $G$. Hence $(3)$ implies $(1)$.
\end{proof}

\section{Metric-analysis preliminaries}
\label{S:metric-analysis-preliminaries}

In this section we collect the metric-analysis tools used in Section~\ref{S:metric-consequences}.

\subsection{Hausdorff gauge measures}
\label{subsec:hausdorff-gauge-measures}

We first recall Hausdorff measures associated to general gauge functions. These gauge functions are different from the Morse gauge used in the definition of sublinear Morse boundary or the conformal gauge of metric. 

A
\emph{gauge function} is a non-decreasing continuous function
$\varphi:[0,\infty)\to[0,\infty)$ such that $\varphi(0)=0$.

Let $(Z,d)$ be a metric space and let $A\subseteq Z$. For $\delta>0$, define
\[
\mathcal H^\varphi_\delta(A)
=
\inf
\left\{
\sum_{i=1}^\infty \varphi(\operatorname{diam} A_i):
A\subseteq \bigcup_{i=1}^\infty A_i,\ 
\operatorname{diam} A_i\le  \delta
\right\}.
\]
The corresponding Hausdorff gauge measure is
\[
\mathcal H^\varphi(A)
=
\lim_{\delta\downarrow 0}\mathcal H^\varphi_\delta(A).
\]
The limit exists because $\mathcal H^\varphi_\delta(A)$ is non-decreasing as
$\delta\downarrow 0$. When $\varphi_s(t)=t^s$, we recover the usual
$s$-dimensional Hausdorff measure:
\[
\mathcal H^s=\mathcal H^{\varphi_s}.
\]

If $\varphi$ and $\psi$ are gauge functions, we write
$\psi\prec\varphi$ if
\[
\lim_{r\downarrow 0}\frac{\psi(r)}{\varphi(r)}=0.
\]
Thus $\psi\prec\varphi$ means that $\psi$ decays faster than $\varphi$ near
$0$. For example, for power gauges $\varphi_s(t)=t^s$ and
$\varphi_\sigma(t)=t^\sigma$, one has
$\varphi_s\prec \varphi_\sigma
\,\Longleftrightarrow\,
s>\sigma$.

The comparison below is standard; see, for example,
\cite[\S 2.5]{book:falconer_fractalgeom}. We include the short proof for
convenience.
\begin{lemma}
\label{lem:compare-hausdorff-gauge-measure}
Let $\varphi$ and $\psi$ be gauge functions with $\psi\prec\varphi$. Then for
every subset $A$ of every metric space, the following hold:
\begin{enumerate}
    \item If $\mathcal H^\varphi(A)<\infty$, then $\mathcal H^\psi(A)=0$.
    \item If $\mathcal H^\psi(A)>0$, then $\mathcal H^\varphi(A)=\infty$.
\end{enumerate}
\end{lemma}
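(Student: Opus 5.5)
The plan is to prove (1) directly from the definitions and obtain (2) as its contrapositive. Assume $\mathcal H^\varphi(A)<\infty$ and fix $\varepsilon>0$. Because $\psi\prec\varphi$, there is $r_0>0$ with
\[
\psi(r)\le \varepsilon\,\varphi(r)\qquad\text{for all }0<r\le r_0 .
\]
For every $0<\delta\le r_0$ take a countable cover $\{A_i\}$ of $A$ with $\operatorname{diam}A_i\le\delta$ and
\[
\sum_i\varphi(\operatorname{diam}A_i)\le \mathcal H^\varphi_\delta(A)+1\le \mathcal H^\varphi(A)+1 .
\]
The second inequality holds because $\mathcal H^\varphi_\delta$ is non-decreasing as $\delta\downarrow0$. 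Such a cover exists since $\mathcal H^\varphi_\delta(A)\le\mathcal H^\varphi(A)<\infty$.

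For the sets with $\operatorname{diam}A_i>0$, the smallness of the diameter gives $\psi(\operatorname{diam}A_i)\le\varepsilon\varphi(\operatorname{diam}A_i)$. For the sets with $\operatorname{diam}A_i=0$, we have $\psi(0)=0=\varphi(0)$, so the same inequality holds trivially. Summing over the cover,
\[
\mathcal H^\psi_\delta(A)\le \sum_i\psi(\operatorname{diam}A_i)\le \varepsilon\bigl(\mathcal H^\varphi(A)+1\bigr).
\]
Now let $\delta\downarrow0$ and then $\varepsilon\downarrow0$. This gives $\mathcal H^\psi(A)=0$, which proves (1).

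For (2), suppose $\mathcal H^\psi(A)>0$. If $\mathcal H^\varphi(A)$ were finite, part (1) would force $\mathcal H^\psi(A)=0$, a contradiction, so $\mathcal H^\varphi(A)=\infty$.

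There is no real obstacle here. The only points needing care are the degenerate cases: sets in the cover with zero diameter, which are handled by the normalization $\varphi(0)=\psi(0)=0$, and the case $\mathcal H^\varphi_\delta(A)=\infty$, which is excluded by the finiteness hypothesis together with monotonicity in $\delta$.
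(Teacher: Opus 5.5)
Your proof is correct and follows the paper's argument. Both compare the approximating sums cover by cover using $\psi\le\varepsilon\varphi$ on $(0,\delta_0]$, then let $\delta\downarrow0$ and $\varepsilon\downarrow0$, and obtain (2) as the contrapositive; the only cosmetic differences are that you fix a near-optimal cover instead of taking infima to get $\mathcal H^\psi_\delta(A)\le\varepsilon\,\mathcal H^\varphi_\delta(A)$ directly, and that you spell out the zero-diameter case that the paper leaves implicit.
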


\begin{proof}
Since $\psi\prec\varphi$, for every $\varepsilon>0$ there exists
$\delta_0>0$ such that
\[
\psi(t)\le  \varepsilon\varphi(t)
\qquad
\text{for all }0<t\le  \delta_0.
\]
Let $0<\delta\le \delta_0$. For every cover
$A\subseteq\bigcup_i A_i$ with $\operatorname{diam} A_i\le \delta$, we have
\[
\sum_i \psi(\operatorname{diam} A_i)
\le 
\varepsilon
\sum_i \varphi(\operatorname{diam} A_i).
\]
Taking infima over all such covers gives $\mathcal H^\psi_\delta(A)
\le 
\varepsilon \mathcal H^\varphi_\delta(A)$. Letting $\delta\downarrow0$ gives $\mathcal H^\psi(A)
\le 
\varepsilon \mathcal H^\varphi(A)$. If $\mathcal H^\varphi(A)<\infty$, then letting $\varepsilon\downarrow0$ gives
$\mathcal H^\psi(A)=0$. The second statement is the contrapositive of the first.
\end{proof}

We will use the following special gauges. For $c_1,c_2,q>0$, define
\[
\psi_q(t)=
\begin{cases}
c_1\exp(-c_2t^{-q}), & t>0,\\
0, & t=0.
\end{cases}
\]
Then $\psi_q$ decays faster than every power gauge. Indeed, for every $s>0$,
\[
\lim_{t\downarrow0}
\frac{\psi_q(t)}{t^s}
=
\lim_{t\downarrow0}
c_1t^{-s}\exp(-c_2t^{-q})
=
0.
\]
Hence $\psi_q\prec \varphi_s$ for every $s>0$. Combining this with \ref{lem:compare-hausdorff-gauge-measure}(i), we conclude the following.
\begin{corollary}
\label{cor:positive-exponential-gauge-infinite-dim}
Let $(Z,d)$ be a metric space. If $\mathcal H^{\psi_q}(Z)>0$ for some gauge
function $\psi_q(t)=c_1\exp(-c_2t^{-q})$, then $\mathcal H^s(Z)=\infty$ for every $s>0$. In particular, $\dim_H(Z,d)=\infty$.
\end{corollary}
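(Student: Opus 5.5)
The plan is to deduce the corollary directly from Lemma~\ref{lem:compare-hausdorff-gauge-measure}, applied with $\varphi=\varphi_s$ and $\psi=\psi_q$, and then read off the Hausdorff dimension from the definition.

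First fix an arbitrary $s>0$. The computation just before the statement shows that
\[
\lim_{t\downarrow 0}\frac{\psi_q(t)}{t^s}=0,
\]
so $\psi_q\prec\varphi_s$. Before invoking the lemma, I will check that $\psi_q$ really is a gauge function. It is continuous on $(0,\infty)$ and non-decreasing there, because $t\mapsto -c_2t^{-q}$ is increasing. Continuity at $0$ holds because $\exp(-c_2t^{-q})\to0$ as $t\downarrow0$, and $\psi_q(0)=0$ by definition. With this in hand, part (2) of Lemma~\ref{lem:compare-hausdorff-gauge-measure}, applied to $A=Z$, turns the hypothesis $\mathcal H^{\psi_q}(Z)>0$ into $\mathcal H^{\varphi_s}(Z)=\mathcal H^s(Z)=\infty$. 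Since $s>0$ was arbitrary, this holds for every $s>0$.

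Finally, recall that $\dim_H(Z,d)=\inf\{s\ge0:\mathcal H^s(Z)=0\}$. We have shown $\mathcal H^s(Z)=\infty\neq0$ for all $s>0$, so the set in this infimum contains no positive $s$. It also cannot contain $s=0$: $\mathcal H^0$ is the counting measure, and a space with infinite $\mathcal H^1$ is nonempty, indeed infinite. The set is therefore empty, and under the convention $\inf\varnothing=\infty$ we get $\dim_H(Z,d)=\infty$.

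No step here is a genuine obstacle. The only points needing care are the verification that $\psi_q$ satisfies the gauge-function axioms, including continuity at $0$, and the convention $\inf\varnothing=\infty$ in the definition of Hausdorff dimension.
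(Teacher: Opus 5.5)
Your proposal is correct and is essentially the paper's argument: you show $\psi_q\prec\varphi_s$ for every $s>0$ from $c_1t^{-s}\exp(-c_2t^{-q})\to 0$ and then apply the gauge comparison lemma, rightly using part (2), which is the contrapositive of the part (1) the paper cites. Your extra checks, that $\psi_q$ is a genuine gauge function and that $\mathcal H^s(Z)=\infty$ for all $s>0$ forces $\dim_H(Z,d)=\infty$, are harmless details the paper leaves implicit.
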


\subsection{A modulus criterion for infinite Hausdorff dimension}
\label{subsec:modulus-hausdorff-dimension}

Let $g:(Z,d_Z)\to(W,d_W)$ be a uniformly continuous map. We define the modulus of
continuity of $g$ as
\[
\omega_g(t)
:=
\sup\{d_W(g(a),g(b)):a,b\in Z,\ d_Z(a,b)\le  t\}.
\]
Then $\omega_g$ is non-decreasing, $\omega_g(0)=0$, and uniform continuity
implies $\omega_g(t)\to0$ as $t\downarrow0$. Moreover, for every subset
$E\subseteq Z$,
\[
\operatorname{diam}_W g(E)
\le 
\omega_g(\operatorname{diam}_Z E).
\]

The next lemma is the transformation estimate for Hausdorff measures under maps with controlled modulus of continuity. It is the general gauge-function version of the usual H\"older estimate; see
\cite[Proposition~2.2]{book:falconer_fractalgeom}, \cite[Chapter~2]{book:hdim:rogers}.
\begin{lemma}
\label{lem:measure-bound-modulus}
Let $g:(Z,d_Z)\to(W,d_W)$ be uniformly continuous, and let $\omega$ be a
non-decreasing function with $\omega(t)\to0$ as $t\downarrow0$ such that $d_W(g(a),g(b))\le  \omega(d_Z(a,b))$ for all $a,b\in Z$. Then for every gauge function $\varphi$ and every
$A\subseteq Z$,
\[
    \mathcal H^\varphi(g(A))
    \le 
    \mathcal H^{\varphi\circ\omega}(A).
\]
\end{lemma}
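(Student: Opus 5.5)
The plan is to work directly from the definition of $\mathcal H^\varphi$ by pushing covers forward: every cover of $A$ by small sets in $Z$ gives a cover of $g(A)$ by small sets in $W$, and diameters are controlled by $\omega$.

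First dispose of the trivial case. If $\mathcal H^{\varphi\circ\omega}(A)=\infty$ there is nothing to prove, so assume it is finite.

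Fix $\delta>0$. Since $\omega(t)\to0$ as $t\downarrow0$, choose $\eta>0$ with $\omega(t)\le\delta$ for all $0\le t\le\eta$. Let $A\subseteq\bigcup_i A_i$ be any countable cover with $\operatorname{diam}A_i\le\eta$, and set $E_i:=g(A_i)$. Then $g(A)\subseteq\bigcup_i E_i$. For any $a,b\in A_i$ we have
\[
d_W(g(a),g(b))\le\omega(d_Z(a,b))\le\omega(\operatorname{diam}A_i),
\]
using that $\omega$ is non-decreasing. Taking the supremum over $a,b\in A_i$ gives $\operatorname{diam}E_i\le\omega(\operatorname{diam}A_i)\le\delta$. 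Since $\varphi$ is non-decreasing,
\[
\sum_i\varphi(\operatorname{diam}E_i)\le\sum_i(\varphi\circ\omega)(\operatorname{diam}A_i).
\]
Hence the $E_i$ form an admissible cover for $\mathcal H^\varphi_\delta(g(A))$. Taking the infimum over all such covers of $A$ yields
\[
\mathcal H^\varphi_\delta(g(A))\le\mathcal H^{\varphi\circ\omega}_\eta(A)\le\mathcal H^{\varphi\circ\omega}(A),
\]
where the last step uses monotonicity of $\mathcal H_\eta$ in $\eta$. Letting $\delta\downarrow0$ gives the claim.

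The only delicate point is a technicality: $\varphi\circ\omega$ need not literally be a gauge function in the paper's sense, since $\omega$ need not be continuous and need not satisfy $\omega(0)=0$. The definition of $\mathcal H^{\varphi\circ\omega}$ only uses the formula for $\mathcal H_\delta$ and monotonicity in $\delta$, so the argument is unaffected. If a continuous gauge is required, replace $\omega$ by its upper semicontinuous or continuous non-decreasing majorant $\tilde\omega(t)=\inf_{s>t}\omega(s)$. This preserves the hypothesis and $\tilde\omega(t)\to0$.

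Note also that if $\omega(0)>0$ the hypothesis $\omega(t)\to0$ still gives $\omega(0)\le\lim_{t\downarrow0}\omega(t)=0$ by monotonicity, so $\omega(0)=0$ automatically. Points, which are sets of diameter $0$, therefore cause no trouble.
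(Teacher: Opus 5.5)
Your proof is correct and follows the same route as the paper. You push a fine cover of $A$ forward under $g$, bound $\operatorname{diam} g(A_i)\le\omega(\operatorname{diam}A_i)$, and compare the sums using monotonicity of $\varphi$; your explicit choice of $\eta$ with $\omega\le\delta$ on $[0,\eta]$ makes precise the bookkeeping on cover diameters that the paper leaves implicit. The closing aside about $\inf_{s>t}\omega(s)$ is unnecessary, and that formula only gives a right-continuous majorant rather than a continuous one; as you note, the argument never uses continuity of $\varphi\circ\omega$.
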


\begin{proof}
Let $A\subseteq Z$ and fix $\delta>0$. If
$A\subseteq\bigcup_i A_i$ with $\operatorname{diam}_Z A_i\le \delta$, then
$g(A)\subseteq\bigcup_i g(A_i)$ and
\[
\operatorname{diam}_W g(A_i)
\le 
\omega(\operatorname{diam}_Z A_i).
\]
Therefore
\[
\sum_i \varphi(\operatorname{diam}_W g(A_i))
\le 
\sum_i
\varphi\bigl(\omega(\operatorname{diam}_Z A_i)\bigr).
\]
Taking the infimum over all such covers and then letting $\delta\downarrow0$ gives
the desired inequality.
\end{proof}

\begin{corollary}
\label{cor:modulus-implies-infinite-Hausdorff-dim}
Let $g:(Z,d_Z)\to(W,d_W)$ be a surjective uniformly continuous map. Suppose that
there exist constants $c_1,c_2,q>0$ such that, for all sufficiently small
$t>0$, $\omega_g(t)\le  c_1\exp(-c_2t^{-q})$.
If $\dim_H(W,d_W)>0$, then $\dim_H(Z,d_Z)=\infty$.
\end{corollary}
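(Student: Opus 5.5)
We argue by contradiction via Lemma~\ref{lem:measure-bound-modulus}, using the exponentially small gauge $\psi_q(t)=c_1\exp(-c_2t^{-q})$ and Corollary~\ref{cor:positive-exponential-gauge-infinite-dim}. Since $\omega_g$ is non-decreasing with $\omega_g(t)\to0$ as $t\downarrow 0$ and dominates $d_W(g(a),g(b))$ by definition, it is an admissible $\omega$ in Lemma~\ref{lem:measure-bound-modulus}. The only preliminary issue is that the bound $\omega_g(t)\le\psi_q(t)$ is assumed only for $t\in(0,t_0]$. Because Hausdorff gauge measures only see covers by sets of small diameter, this suffices. Concretely, we rerun the proof of Lemma~\ref{lem:measure-bound-modulus} with covers of diameter at most $\delta\le t_0$. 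This gives, for every gauge function $\varphi$ and every $A\subseteq Z$,
\[
\mathcal H^\varphi(g(A))\le \mathcal H^{\varphi\circ\psi_q}(A),
\]
using monotonicity of $\varphi$ and $\operatorname{diam}_W g(A_i)\le\omega_g(\operatorname{diam}_Z A_i)\le\psi_q(\operatorname{diam}_ZA_i)$. We also extend $\psi_q$ by $\psi_q(0)=0$ so that $\varphi\circ\psi_q$ is a gauge function.

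Next, fix $s$ with $0<s<\dim_H(W,d_W)$. Then $\mathcal H^s(W)=\infty>0$. Take $\varphi=\varphi_s$. By surjectivity, $g(Z)=W$, so
\[
0<\mathcal H^s(W)\le \mathcal H^{\varphi_s\circ\psi_q}(Z).
\]
Here $\varphi_s\circ\psi_q(t)=c_1^s\exp(-sc_2t^{-q})$, which is again a gauge of the form $\psi_q$ with constants $c_1^s$ and $sc_2$. Hence $\mathcal H^{\psi'_q}(Z)>0$ for such a gauge $\psi'_q$. Corollary~\ref{cor:positive-exponential-gauge-infinite-dim} then yields $\mathcal H^\sigma(Z)=\infty$ for every $\sigma>0$, that is, $\dim_H(Z,d_Z)=\infty$.

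The main point to check is the localization to small scales described in the first paragraph. Everything else is a direct composition of Lemma~\ref{lem:measure-bound-modulus} with Corollary~\ref{cor:positive-exponential-gauge-infinite-dim}. The other small check is that the composite $\varphi_s\circ\psi_q$ stays in the exponential family; this holds exactly as computed above.
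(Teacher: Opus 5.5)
Your proof is correct and follows the paper's argument: you pull $\mathcal H^s(W)>0$ back through Lemma~\ref{lem:measure-bound-modulus} to a positive exponential-gauge measure on $Z$, then conclude with Corollary~\ref{cor:positive-exponential-gauge-infinite-dim}. Your explicit replacement of $\omega_g$ by the continuous gauge $\psi_q$ on small scales is a bit more careful than the paper. The paper asserts that $\varphi_\sigma\circ\omega_g$ itself has exponential form, when it is only dominated by such a gauge near $0$.
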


\begin{proof}
Choose $0<\sigma<\dim_H(W,d_W)$. Then $\mathcal H^\sigma(W,d_W)>0$. Since $g$ is surjective, $g(Z)=W$. By Lemma~\ref{lem:measure-bound-modulus},
\[
0 < \, \mathcal H^\sigma(W,d_W)
=
\mathcal H^\sigma(g(Z))
\le 
\mathcal H^{\sigma\ \circ\ \omega_g}(Z,d_Z).
\]
Now $\sigma \circ \omega_g$ is also a gauge of the form $t\mapsto C_1\exp(-C_2t^{-q})$ for different choices of $C_1$ and $C_2$. By
Corollary~\ref{cor:positive-exponential-gauge-infinite-dim}, $\dim_H(Z,d_Z)=\infty$.

\end{proof}

\subsection{Doubling and Assouad dimension}
\label{subsec:doubling-assouad}

A metric space $(Z,d)$ is called \emph{doubling} if there exists
$N\in\mathbb N$ such that every ball of radius $r>0$ can be covered by at most
$N$ balls of radius $r/2$.

We will not need the definition of Assouad dimension. We only use the standard
facts that
\[
\dim_H(Z,d)\le  \dim_A(Z,d),
\]
and that a metric space has finite Assouad dimension if and only if it is
doubling; see, for example, \cite{Heinonen2001LecturesonAnalysis}. Consequently, if $\dim_H(Z,d)=\infty$, then $\dim_A(Z,d)=\infty$, and $(Z,d)$ is not doubling. For instance, see \cite{Heinonen2001LecturesonAnalysis} for more details.

\subsection{Nagata dimension and capacity dimension}
\label{subsec:nagata-capacity-dimension}

We now recall Nagata dimension. Let $\mathcal U$ be a cover of a metric space
$(Z,d)$, and let $s>0$. The cover $\mathcal U$ has \emph{$s$-multiplicity at
most $n+1$} if every subset of $Z$ of diameter at most $s$ meets at most
$n+1$ elements of $\mathcal U$.

The metric space $(Z,d)$ has \emph{Nagata dimension} at most $n$ if there
exists a constant $c>0$ such that for every $s>0$, there is a cover
$\mathcal U_s$ of $Z$ satisfying:
\begin{enumerate}
    \item $\operatorname{diam} U\le  cs$ for every $U\in\mathcal U_s$;
    \item $\mathcal U_s$ has $s$-multiplicity at most $n+1$.
\end{enumerate}
We write $\dim_N(Z,d)\le  n$. The least such $n$ is denoted $\dim_N(Z,d)$.

We will use the standard inequality
\[
\dim Z\le  \dim_N(Z,d),
\]
where $\dim Z$ denotes the covering dimension of the underlying topological
space.

The following result is the form of the Lang-Schlichenmaier \cite[Lemma~2.1]{LangSchlichenmaier2005} criterion that we will use later.

\begin{lemma}[Lang-Schlichenmaier \cite{LangSchlichenmaier2005}, Lemma~2.1]
\label{lem:lang-schlichenmaier-homeo}
Let $f:(Z,d_Z)\to(W,d_W)$ be a homeomorphism. Suppose that there exist two
homeomorphisms $\varphi,\psi:[0,\infty)\to[0,\infty)$ such that
\[
\varphi(d_Z(x,y))
\le 
d_W(f(x),f(y))
\le 
\psi(d_Z(x,y))
\]
for all $x,y\in Z$. Suppose moreover that for every $\bar c>0$, $
\sup_{s>0}
\frac{1}{s}\varphi^{-1}\bigl(\bar c\,\psi(s)\bigr)
<
\infty$.
Then $\ndim (Z,d_Z)\le  \dim_N(W,d_W)$.
\end{lemma}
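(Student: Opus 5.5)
The plan is to apply the Lang–Schlichenmaier criterion to the identity map in the form of its definition-level argument, using only the two control functions $\varphi$ and $\psi$. I will first show that any $\varphi,\psi$ satisfying the hypotheses transport Nagata covers from $W$ to $Z$. Throughout, write $n=\dim_N(W,d_W)$ and assume $n<\infty$; otherwise there is nothing to prove. Fix the constant $c$ from the definition of $\dim_N(W,d_W)\le n$.

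\textbf{Step 1: choose the scale on $W$.} Given $s>0$, set $s'=\psi(s)$ and let $\mathcal V$ be a cover of $W$ with $\operatorname{diam}V\le c s'$ for every $V\in\mathcal V$ and with $s'$-multiplicity at most $n+1$. Pull it back to the cover $\mathcal U=\{f^{-1}(V):V\in\mathcal V\}$ of $Z$, which is a cover because $f$ is a bijection.

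\textbf{Step 2: multiplicity.} Let $E\subseteq Z$ have $\operatorname{diam}E\le s$. By the upper bound and monotonicity of $\psi$ (an increasing homeomorphism fixing $0$), every pair $x,y\in E$ satisfies $d_W(f(x),f(y))\le\psi(s)=s'$. Hence $\operatorname{diam} f(E)\le s'$, so $f(E)$ meets at most $n+1$ members of $\mathcal V$. Therefore $E$ meets at most $n+1$ members of $\mathcal U$.

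\textbf{Step 3: diameter.} Take $x,y\in f^{-1}(V)$. Then $\varphi(d_Z(x,y))\le d_W(f(x),f(y))\le c\,\psi(s)$, and since $\varphi^{-1}$ is increasing, $d_Z(x,y)\le\varphi^{-1}(c\,\psi(s))$. Applying the hypothesis with $\bar c=c$ gives a constant $c'$, independent of $s$, such that $\varphi^{-1}(c\,\psi(s))\le c' s$. Thus every $U\in\mathcal U$ has diameter at most $c's$, and $\dim_N(Z,d_Z)\le n$.

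\textbf{Remaining points and expected obstacle.} The only delicate point is that $\varphi$ and $\psi$ must be increasing, which holds because they are homeomorphisms of $[0,\infty)$ and so fix $0$. The expected obstacle is really downstream, in applying the lemma to $d_{\mathrm{log}}$ versus $d_{\mathrm{vis}}$: there the dependence between the two metrics is of the form $t\mapsto(1+\log(B/t))^{-\theta}$. That function is not a homeomorphism of $[0,\infty)$ globally, so one has to modify it at large scales, using that both spaces are bounded.
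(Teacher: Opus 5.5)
Your argument is correct. The paper does not reprove this lemma; it only cites Lang--Schlichenmaier, and your proof is the standard one behind that citation: pull back a Nagata cover of $W$ at scale $\psi(s)$, use the upper bound $d_W(f(x),f(y))\le\psi(d_Z(x,y))$ to control the $s$-multiplicity, and use the lower bound together with $\varphi^{-1}(c\,\psi(s))\le c's$ to control the diameters.
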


\begin{remark}
The \emph{capacity dimension} $\operatorname{cdim}$, also called \emph{linearly controlled metric dimension} is the small-scale version of Nagata dimension: one imposes the same type of linearly controlled covers,
but only for sufficiently small scales. Therefore, for compact metric spaces, capacity dimension and Nagata dimension agree. Indeed, the small-scale condition
gives the Nagata covers for all sufficiently small $s$, while for large $s$ the one-element cover handles the remaining scales. Therefore, henceforth, we will state results by referring to these dimensions as Nagata dimension.    
\end{remark}

We will use the following theorem of Buyalo-Lebedeva \cite{BuyaloLebedevaPontryagin2007}.

\begin{theorem}[Buyalo-Lebedeva \cite{BuyaloLebedevaPontryagin2007}]
\label{thm:buyalo-lebedeva}
Let $X$ be a cobounded, hyperbolic,
proper, geodesic space, and let $d_{\mathrm{vis}}$ be a visual metric on $\partial X$. Then
\[
    \dim_N(\partial X,d_{\mathrm{vis}})
    =
    \dim(\partial X) \quad \text{ and } \quad \asdim X
    =
    \dim(\partial X)+1
\]
where $\asdim$ denotes the asymptotic dimension. 

In particular, for a $\delta$-hyperbolic group $G$, $\ndim (\partial G,d_{\mathrm{vis}})
    =
    \dim(\partial G)$ and $\asdim G
=
\dim(\partial G)+1$.
\end{theorem}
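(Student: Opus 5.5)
Since this theorem is quoted from \cite{BuyaloLebedevaPontryagin2007}, the following is only a sketch of how I would reprove it. It splits into two equalities, and the plan is to prove each by a pair of inequalities. Fix a visual metric $d_{\mathrm{vis}}$ with parameter $\varepsilon$. The inequality $\dim(\partial X)\le \dim_N(\partial X,d_{\mathrm{vis}})$ is the standard one recorded above. So the first equality reduces to the upper bound $\dim_N(\partial X,d_{\mathrm{vis}})\le n:=\dim\partial X$.

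\textbf{Step 1: $\dim_N\le n$.} Since $\partial X$ is compact, the remark on capacity dimension means it suffices to build covers at all sufficiently small scales.
\begin{itemize}
\item \emph{A cover at unit scale.} Covering dimension $n$ gives a finite open cover $\mathcal V$ of $\partial X$ of multiplicity at most $n+1$ with positive Lebesgue number.
\item \emph{Transport to small scales.} Coboundedness supplies, for every $t\ge0$ and every $\xi\in\partial X$, an isometry $g$ of $X$ moving the point of a ray $[\go,\xi)$ at distance $t$ back to within bounded distance of $\go$.
\item \emph{Distortion control.} Such a $g$ acts on $\partial X$ with uniformly controlled distortion: up to uniform constants it rescales distances by $e^{\varepsilon t}$ on the visual ball of radius $\asymp e^{-\varepsilon t}$ about $\xi$. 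This follows from the basepoint-change estimate for Gromov products.
\item \emph{Pullback.} Pulling $\mathcal V$ back by $g$ gives, on each such small ball, a cover with mesh $\lesssim e^{-\varepsilon t}$, multiplicity at most $n+1$, and Lebesgue number $\gtrsim e^{-\varepsilon t}$.
\end{itemize}

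\textbf{Step 2: globalize the local covers.} This is the main obstacle. Local covers of multiplicity $n+1$ on overlapping balls do not glue to a global cover of multiplicity $n+1$. I would avoid gluing by working with the hyperbolic approximation: a graph whose level-$k$ vertices form a maximal $e^{-\varepsilon k}$-separated net of $\partial X$.
\begin{itemize}
\item Apply the transported covers to the horospherical slices of this graph.
\item Use a simplicial-colouring argument, in the style of Buyalo's proof of capdim bounds, to select, scale by scale, $n+1$ families of uniformly bounded, disjoint, well-separated sets.
\item The self-similarity from Step 1 makes the constants independent of the level $k$. That uniformity is exactly linear control of the covers, and it yields $\dim_N\le n$.
\end{itemize}

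\textbf{Step 3: asymptotic dimension.}
\begin{itemize}
\item \emph{Upper bound.} $X$ is roughly isometric to the hyperbolic cone over $(\partial X,d_{\mathrm{vis}})$, using visuality and coboundedness. The Buyalo--Schroeder cone estimate gives $\asdim \mathrm{Co}(Z)\le \dim_N Z+1$. Hence $\asdim X\le n+1$.
\item \emph{Lower bound.} I would choose an $n$-dimensional compact subset $K\subseteq\partial X$. The union of geodesic rays from $\go$ to $K$ coarsely contains a copy of the Euclidean-type cone over $K$. A Mayer--Vietoris or cohomological coarse argument then shows that a space containing such a cone over an $n$-dimensional compactum has $\asdim\ge n+1$. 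One form of this argument passes through the Higson corona, or through Dranishnikov's relation $\dim\nu X=\asdim X$ for the corona $\nu X$, which surjects onto $\partial X$ fibrewise over the cone directions.
\end{itemize}

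\textbf{Step 4: the group case.} This follows by applying the result to a Cayley graph of $G$, which is cobounded, proper, geodesic and $\delta$-hyperbolic.
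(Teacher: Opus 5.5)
The paper does not prove this statement; it is quoted from \cite{BuyaloLebedevaPontryagin2007} (Theorems 1.4, 1.5, 6.3, 6.4). Read as an independent reproof, your sketch has two genuine gaps.

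The first gap is Step~2, which is not a step but the theorem itself. Step~1 is fine: coboundedness does make $(\partial X,d_{\mathrm{vis}})$ locally self-similar. The upper bound in Step~3 is also fine once $\dim_N\le n$ is known. But the entire difficulty is to keep the count $n+1$ when the transported covers on overlapping balls are merged at one scale, with constants independent of the scale. The colouring device you invoke works as follows: colour a maximal $e^{-\varepsilon k}$-net with $N$ colours, $N$ depending only on the doubling constant, so that equal colours are far apart, and take unions within each colour. This gives multiplicity about $N(n+1)$. It only reproves that a doubling compactum has finite Nagata dimension, with a bound governed by the doubling constant rather than by $\dim\partial X$. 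Nothing in the sketch cuts this back to $n+1$. That reduction is exactly the Buyalo--Lebedeva theorem that capacity dimension equals covering dimension for locally self-similar compacta, so you are assuming what has to be proved.

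The second gap is the lower bound in Step~3, which rests on a false claim. The union of rays from $\go$ to $K$ does not coarsely contain the Euclidean cone over $K$. At height $t$, the rays towards $x$ and $y$ are at distance $2\bigl(t+\varepsilon^{-1}\log d_{\mathrm{vis}}(x,y)\bigr)_+$ up to additive constants. This is logarithmic in $d_{\mathrm{vis}}(x,y)$, whereas in the Euclidean cone the distance is comparable to $t\,d_{\mathrm{vis}}(x,y)$. No cleverer map can work in general. For $X=\mathbb{H}^2$ and $K=S^1$, your claim would give a coarse embedding $\R^2\to\mathbb{H}^2$. Coarse invariance of domain would upgrade this to a quasi-isometry, which is impossible.

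The Higson-corona variant does not rescue the argument either. A continuous surjection $\nu X\to\partial X$ yields no lower bound on $\dim\nu X$, let alone the extra $+1$, since compacta can gain dimension under continuous surjections (e.g.\ the Cantor set onto $[0,1]$). So $\asdim X\ge\dim\partial X+1$ is left unproved and needs a different input. For groups, one possibility is cohomological: combine the Bestvina--Mess description of $\dim\partial G$ through the compactly supported cohomology of the Rips complex with the vanishing of coarse cohomology above the asymptotic dimension.
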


\begin{proof}
This is due to Buyalo-Lebedeva; see
\cite[Theorems~1.4,~1.5,~6.3 and~6.4]{BuyaloLebedevaPontryagin2007}.
\end{proof}
For the definition of asymptotic dimension, see \cite{survey:belldrashnikov_asdim}.

\subsection{Linear connectedness and uniform perfectness}
\label{subsec:linear-connectedness-uniform-perfectness-prelims}

We finally recall two qualitative metric properties.

A metric space $(Z,d)$ is called \emph{linearly connected} if there exists
$L\ge 1$ such that for every $x,y\in Z$, there is a connected subset
$S\subseteq Z$ with $x,y\in S$ and $\diam S\le  Ld(x,y)$.

This property is also commonly called \emph{bounded turning}; see
\cite[Definition~2.7]{TukiaVaisala1980}.

A metric space $(Z,d)$ is called \emph{uniformly perfect} if there exists
$\lambda>1$ such that for every $x\in Z$ and every
$0<r<\operatorname{diam} Z$, there exists $y\in Z$ such that
\[
\frac{r}{\lambda}
\le 
d(x,y)
\le 
r.
\]
Under this convention, finite metric spaces are not uniformly perfect unless
one uses the vacuous convention for isolated scales. In the applications below,
the relevant non-elementary hyperbolic group boundaries are infinite compact
spaces, so this ambiguity does not arise. For more details, see \cite{Heinonen2001LecturesonAnalysis}.

\section{Metric consequences of the logarithmic gauge}
\label{S:metric-consequences}
In this section, we study various metric properties of the log metric on $\bdy X$. The results in this section
show that the log metrics and the visual metrics are in different conformal gauges with contrasting small-scale metric properties. However, despite these differences, both metrics detect numerous large-scale geometric properties alike.

Throughout this section, $X$ denotes a proper geodesic $\delta$-hyperbolic space with basepoint $\go$, $d_{\mathrm{vis}}$ denotes a visual metric on $\bdy X$ with visual parameter $\varepsilon$, and $d_{\mathrm{log}}$ denotes a log metric on $\bdy X$ with $\lbase =2$, a fixed admissible exponent $\theta>0$, basepoint $\go$ and a fixed $\kappa$.

\subsection{Hausdorff dimension and non-doubling}
\label{subsec:identity-log-to-visual}

The key analytic observation is that the identity map from the logarithmic
boundary to the visual boundary has an exponentially small modulus of
continuity.

\begin{proposition}
\label{prop:log-to-visual-distortion}
There exist constants $a_1,a_2,b_1,b_2>0$ such that for all distinct $\bfa,\bfb\in \bdy X$, if $t=d_{\mathrm{log}}(\bfa,\bfb)$, then
\[
    a_1\exp(-a_2t^{-1/\theta})
    \le 
    d_{\mathrm{vis}}(\bfa,\bfb)
    \le 
    b_1\exp(-b_2t^{-1/\theta}).
\]
Moreover, the identity map
$
\id:
(\partial X,d_{\mathrm{log}})
\longrightarrow
(\partial X,d_{\mathrm{vis}})
$
has modulus of continuity satisfying
\[
\omega_{\id}(t)
\le 
b_1\exp(-b_2t^{-1/\theta})
\]
for all sufficiently small $t>0$.
\end{proposition}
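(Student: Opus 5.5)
The plan is to express both metrics through the single quantity $P=(\bfa\mid\bfb)_\go$ and then eliminate $P$. By Corollary~\ref{cor:log-metric-gromov-product} there is a constant $C_0\ge1$ with
\[
C_0^{-1}(1+P)^{-\theta}\le t\le C_0(1+P)^{-\theta},
\qquad t=d_{\mathrm{log}}(\bfa,\bfb).
\]
Solving for $1+P$ gives $C_0^{-1/\theta}t^{-1/\theta}\le 1+P\le C_0^{1/\theta}t^{-1/\theta}$. By Definition~\ref{Def:visual-metric}, $c_{\mathrm{vis}}e^{-\varepsilon P}\le d_{\mathrm{vis}}(\bfa,\bfb)\le C_{\mathrm{vis}}e^{-\varepsilon P}$.

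For the upper bound, substitute the lower estimate on $P$:
\[
d_{\mathrm{vis}}\le C_{\mathrm{vis}}e^{\varepsilon}e^{-\varepsilon(1+P)}\le C_{\mathrm{vis}}e^{\varepsilon}\exp\bigl(-\varepsilon C_0^{-1/\theta}t^{-1/\theta}\bigr).
\]
So we may take $b_1=C_{\mathrm{vis}}e^{\varepsilon}$ and $b_2=\varepsilon C_0^{-1/\theta}$.

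For the lower bound, substitute the upper estimate on $P$, namely $P\le 1+P\le C_0^{1/\theta}t^{-1/\theta}$. This gives
\[
d_{\mathrm{vis}}\ge c_{\mathrm{vis}}\exp\bigl(-\varepsilon C_0^{1/\theta}t^{-1/\theta}\bigr),
\]
so we may take $a_1=c_{\mathrm{vis}}$ and $a_2=\varepsilon C_0^{1/\theta}$. Shifting $P$ to $1+P$ only costs the multiplicative factor $e^{\varepsilon}$, so the bounds hold for all distinct pairs, not merely for large $P$.

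For the modulus statement, take $\bfa,\bfb$ with $d_{\mathrm{log}}(\bfa,\bfb)=s\le t$. If $s=0$ the points coincide and contribute $0$. Otherwise the upper bound gives
\[
d_{\mathrm{vis}}(\bfa,\bfb)\le b_1\exp(-b_2s^{-1/\theta})\le b_1\exp(-b_2t^{-1/\theta}),
\]
because $s\mapsto \exp(-b_2s^{-1/\theta})$ is increasing on $(0,\infty)$. Taking the supremum over such pairs yields $\omega_{\id}(t)\le b_1\exp(-b_2t^{-1/\theta})$ for every $t>0$, and in particular for small $t$.

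There is no real obstacle. The only care needed is to use the correct direction of each comparison when inverting the power $\theta$, and to account for the additive $1$ in $1+P$ through the constant $e^{\varepsilon}$.
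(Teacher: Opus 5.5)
Your proof is correct and takes the same route as the paper: you sandwich both metrics between functions of $P=(\bfa\mid\bfb)_\go$ using Corollary~\ref{cor:log-metric-gromov-product} and the visual estimate, invert the power $\theta$ in the right direction, and get the modulus bound from the monotonicity of $s\mapsto\exp(-b_2s^{-1/\theta})$. The only difference is cosmetic: for the lower bound you use $P\le 1+P$, while the paper uses $P\le A_2^{1/\theta}t^{-1/\theta}-1$, and this changes $a_1$ only by a factor of $e^{\varepsilon}$.
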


\begin{proof}
By Corollary~\ref{cor:log-metric-gromov-product}, there exist constants
$A_1,A_2>0$ such that for all distinct $\bfa,\bfb\in\bdy X$,
\[
A_1\bigl(1+(\bfa\mid\bfb)_\go\bigr)^{-\theta}
\le 
d_{\mathrm{log}}(\bfa,\bfb)
\le 
A_2\bigl(1+(\bfa\mid\bfb)_\go\bigr)^{-\theta}.
\]
Since $d_{\mathrm{vis}}$ is a visual metric, there exist constants
$B_1,B_2>0$ and $\varepsilon>0$ such that
\[
B_1e^{-\varepsilon(\bfa\mid\bfb)_\go}
\le 
d_{\mathrm{vis}}(\bfa,\bfb)
\le 
B_2e^{-\varepsilon(\bfa\mid\bfb)_\go}.
\]
Let $P=(\bfa\mid\bfb)_\go$. Suppose that $d_{\mathrm{log}}(\bfa,\bfb)= t$. 
Then the lower bound for $d_{\mathrm{log}}$ gives $A_1(1+P)^{-\theta} \le  t$. Hence,
\[
    1+P
    \ge 
    \left(\frac{A_1}{t}\right)^{1/\theta}
    \quad \implies \quad
    P
    \ge 
    A_1^{1/\theta}t^{-1/\theta}-1.
\]
Using the upper visual estimate, we obtain,
\[
\begin{aligned}
d_{\mathrm{vis}}(\bfa,\bfb)
&\le 
B_2e^{-\varepsilon P} \\
&\le 
B_2e^\varepsilon
\exp\left(-\varepsilon A_1^{1/\theta}t^{-1/\theta}\right).
\end{aligned}
\]
This gives the upper estimate after setting $b_1=B_2e^\varepsilon$ and $b_2=\varepsilon A_1^{1/\theta}$.

For the lower estimate, we use the upper bound for the log metric $ t\le  A_2(1+P)^{-\theta}$. Thus,
\[
1+P
\le 
\left(\frac{A_2}{t}\right)^{1/\theta} \quad \implies \quad
P
\le 
A_2^{1/\theta}t^{-1/\theta}-1.
\]
Using the lower visual estimate, we get
\[
\begin{aligned}
d_{\mathrm{vis}}(\bfa,\bfb)
&\,\ge\, 
B_1e^{-\varepsilon P} \,\ge\, 
B_1e^\varepsilon
\exp\left(-\varepsilon A_2^{1/\theta}t^{-1/\theta}\right).
\end{aligned}
\]
This gives the lower estimate after renaming constants.

Finally, suppose $d_{\mathrm{log}}(\bfa,\bfb)\le  t$. Then, by the upper estimate just proved,
\[
d_{\mathrm{vis}}(\bfa,\bfb)
\le 
b_1\exp\left(-b_2\,d_{\mathrm{log}}(\bfa,\bfb)^{-1/\theta}\right)
\le 
b_1\exp(-b_2t^{-1/\theta}).
\]
Taking the supremum over all pairs with
$d^X_{\go,\theta}(\bfa,\bfb)\le  t$ gives
$
\omega_{\id}(t)
\le 
b_1\exp(-b_2t^{-1/\theta})
$
for all sufficiently small $t>0$.
\end{proof}

\begin{theorem}
\label{thm:subset-positive-visual-dim-infinite-log-dim}
If $E\subseteq\partial X$ satisfies $\dim_H(E,d_{\mathrm{vis}})>0$, 
then $ \dim_H(E,d_{\mathrm{log}})=\infty$.
\end{theorem}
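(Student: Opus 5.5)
The plan is to apply Corollary~\ref{cor:modulus-implies-infinite-Hausdorff-dim}, or more directly its ingredients Lemma~\ref{lem:measure-bound-modulus} and Corollary~\ref{cor:positive-exponential-gauge-infinite-dim}, to the restricted identity map
\[
g=\id|_E:(E,d_{\mathrm{log}})\longrightarrow (E,d_{\mathrm{vis}}).
\]
This map is surjective onto $(E,d_{\mathrm{vis}})$. It is uniformly continuous, since by Proposition~\ref{prop:log-to-visual-distortion} we have $d_{\mathrm{vis}}(\bfa,\bfb)\le b_1\exp(-b_2\,d_{\mathrm{log}}(\bfa,\bfb)^{-1/\theta})$ for all distinct $\bfa,\bfb$, and the right-hand side tends to $0$ with $d_{\mathrm{log}}$. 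The same estimate controls the modulus of continuity of $g$: restricting to $E$ can only decrease the supremum, so $\omega_g(t)\le \omega_{\id}(t)\le b_1\exp(-b_2t^{-1/\theta})$ for small $t$.

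Rather than quoting the corollary verbatim, I will run its proof, because one point needs care. Set $\omega(t):=b_1\exp(-b_2t^{-1/\theta})$ for $t>0$ and $\omega(0)=0$. This is nondecreasing, tends to $0$, and satisfies $d_{\mathrm{vis}}(g(\bfa),g(\bfb))\le\omega(d_{\mathrm{log}}(\bfa,\bfb))$ for \emph{all} pairs, not only those at small distance, since the pointwise estimate of Proposition~\ref{prop:log-to-visual-distortion} is global. Choose $0<\sigma<\dim_H(E,d_{\mathrm{vis}})$, so that $\mathcal H^\sigma(E,d_{\mathrm{vis}})=\infty>0$. Lemma~\ref{lem:measure-bound-modulus} with $\varphi(t)=t^\sigma$ then gives
\[
0<\mathcal H^\sigma(E,d_{\mathrm{vis}})\le \mathcal H^{\varphi\circ\omega}(E,d_{\mathrm{log}}),
\]
and $\varphi\circ\omega(t)=b_1^\sigma\exp(-\sigma b_2 t^{-1/\theta})$ is a gauge of the form $\psi_q$ with $q=1/\theta$, $c_1=b_1^\sigma$, and $c_2=\sigma b_2$. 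By Corollary~\ref{cor:positive-exponential-gauge-infinite-dim}, applied to the metric space $(E,d_{\mathrm{log}})$, we get $\mathcal H^s(E,d_{\mathrm{log}})=\infty$ for every $s>0$. Hence $\dim_H(E,d_{\mathrm{log}})=\infty$.

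The only step that needs checking is that Hausdorff measures of $E$ computed intrinsically (as a metric space) agree with those computed as a subset of $\partial X$. This holds because covers may be intersected with $E$ without increasing diameters. A related technicality is that $\omega$ must be continuous and monotone, so that $\varphi\circ\omega$ is a genuine gauge function; this is immediate from the explicit formula. I expect no real obstacle beyond these bookkeeping points.
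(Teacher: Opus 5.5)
Your proposal is correct and follows the paper's own argument: restrict the identity to $E$, use the exponential modulus bound from Proposition~\ref{prop:log-to-visual-distortion}, and conclude via Lemma~\ref{lem:measure-bound-modulus} and Corollary~\ref{cor:positive-exponential-gauge-infinite-dim}. The paper packages these last two steps by citing Corollary~\ref{cor:modulus-implies-infinite-Hausdorff-dim}. Your use of the explicit global bound $\omega(t)=b_1\exp(-b_2t^{-1/\theta})$ in place of $\omega_g$ is a slightly cleaner way to justify that the composed gauge has the required exponential form.
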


\begin{proof}
Consider the restriction of the identity map $g=\id|_E:
(E,d_{\mathrm{log}})
\longrightarrow
(E,d_{\mathrm{vis}})$. This map is surjective. Moreover, its modulus of continuity is bounded by the modulus of the identity on the entire boundary: $
\omega_g(t)
\le 
\omega_{\id}(t)$.
By Proposition~\ref{prop:log-to-visual-distortion}, there exist constants
$b_1,b_2>0$ such that $\omega_{\id}(t)
\le 
b_1\exp(-b_2t^{-1/\theta})$ for all sufficiently small $t>0$.
Since $\dim_H(E,d_{\mathrm{vis}})>0$, Corollary~
\ref{cor:modulus-implies-infinite-Hausdorff-dim}, applied with
$q=1/\theta$, gives $\dim_H(E,d_{\mathrm{log}})=\infty$.
\end{proof}

\begin{corollary}
\label{thm:positive-visual-dim-implies-infinite-log-dim}
Suppose that $\dim_H(\bdy X,d_{\mathrm{vis}})>0$.
Then $
\dim_H(\bdy X,d_{\mathrm{log}})=\infty$, $\dim_A(\bdy X,d_{\mathrm{log}})=\infty$, and $(\bdy X,d_{\mathrm{log}})$ is not doubling.
\end{corollary}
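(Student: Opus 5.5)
The corollary follows by applying Theorem~\ref{thm:subset-positive-visual-dim-infinite-log-dim} with $E=\bdy X$, and then appealing to the standard facts recalled in Section~\ref{subsec:doubling-assouad}. The hypothesis $\dim_H(\bdy X,d_{\mathrm{vis}})>0$ is exactly the input that theorem requires. It yields $\dim_H(\bdy X,d_{\mathrm{log}})=\infty$ directly. The underlying mechanism is Proposition~\ref{prop:log-to-visual-distortion}: the identity map $(\bdy X,d_{\mathrm{log}})\to(\bdy X,d_{\mathrm{vis}})$ is a surjection with modulus of continuity bounded by $b_1\exp(-b_2t^{-1/\theta})$. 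Corollary~\ref{cor:modulus-implies-infinite-Hausdorff-dim} then converts positive visual dimension into infinite logarithmic dimension.

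For the Assouad dimension, I will use the inequality $\dim_H(Z,d)\le\dim_A(Z,d)$ with $Z=\bdy X$ and $d=d_{\mathrm{log}}$. This gives $\dim_A(\bdy X,d_{\mathrm{log}})\ge\dim_H(\bdy X,d_{\mathrm{log}})=\infty$.

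For non-doubling, I will use the fact that a metric space is doubling if and only if its Assouad dimension is finite. Since $\dim_A(\bdy X,d_{\mathrm{log}})=\infty$, the space $(\bdy X,d_{\mathrm{log}})$ cannot be doubling.

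No step poses a real obstacle, because all the analytic content already sits in Proposition~\ref{prop:log-to-visual-distortion} and Corollary~\ref{cor:modulus-implies-infinite-Hausdorff-dim}. The one point to check is that the composite gauge $\sigma\circ\omega_{\id}$ used in Corollary~\ref{cor:modulus-implies-infinite-Hausdorff-dim} is again of the form $C_1\exp(-C_2t^{-q})$. Writing $\varphi_\sigma(u)=u^\sigma$, this holds because $\varphi_\sigma(b_1e^{-b_2t^{-q}})=b_1^\sigma e^{-\sigma b_2 t^{-q}}$. It therefore decays faster than every power $t^s$, which is exactly what Corollary~\ref{cor:positive-exponential-gauge-infinite-dim} needs.
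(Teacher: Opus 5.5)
Your proposal is correct and matches the paper's proof: apply Theorem~\ref{thm:subset-positive-visual-dim-infinite-log-dim} with $E=\bdy X$, then use $\dim_H\le\dim_A$ and the equivalence of doubling with finite Assouad dimension. One small refinement to your gauge check: $\varphi_\sigma\circ\omega_{\id}$ is only bounded above by $b_1^\sigma e^{-\sigma b_2t^{-q}}$ for small $t$, not equal to it. That bound is all the argument needs; alternatively, apply Lemma~\ref{lem:measure-bound-modulus} directly with $\omega(t)=b_1e^{-b_2t^{-q}}$.
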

\begin{proof}
Apply Theorem~
\ref{thm:subset-positive-visual-dim-infinite-log-dim}
with $E=\partial X$. The remaining conclusions follow from the fact $
\dim_H Z\le \dim_A Z$ for any metric space $Z$ and the equivalence between finite Assouad dimension and doubling as discussed in Section~\ref{subsec:doubling-assouad}.
\end{proof}

\begin{corollary}
\label{cor:log-continua-infinite-dimensional}
Every nondegenerate connected subset
    $K\subseteq\partial_\kappa G$ satisfies
    \[
    \dim_H(K,d_{\mathrm{log}})=\infty.
    \]
    Moreover, $(\bdy X, \dlog)$ contains no nonconstant
    rectifiable curves. In particular, every Lipschitz map from an interval into $(\partial G,d_{\mathrm{log}})$ is constant.
\end{corollary}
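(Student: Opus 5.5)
The plan is to reduce everything to Theorem~\ref{thm:subset-positive-visual-dim-infinite-log-dim}: it suffices to show that every nondegenerate connected subset $K\subseteq\partial G$ satisfies $\dim_H(K,d_{\mathrm{vis}})>0$. This is a general fact about metric spaces, with no hyperbolic input needed. If $K$ is connected and contains two distinct points $x\neq y$, then the $1$-Lipschitz map $f(z)=d_{\mathrm{vis}}(x,z)$ sends $K$ to a connected subset of $\R$ containing $0$ and $d_{\mathrm{vis}}(x,y)>0$, hence onto the whole interval $[0,d_{\mathrm{vis}}(x,y)]$. Applying Lemma~\ref{lem:measure-bound-modulus} with $\omega(t)=t$ and $\varphi(t)=t$ gives $\mathcal H^1(K,d_{\mathrm{vis}})\ge \mathcal H^1(f(K))=d_{\mathrm{vis}}(x,y)>0$. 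Therefore $\dim_H(K,d_{\mathrm{vis}})\ge 1>0$, and Theorem~\ref{thm:subset-positive-visual-dim-infinite-log-dim} yields $\dim_H(K,d_{\mathrm{log}})=\infty$. The same projection argument works directly for $d_{\mathrm{log}}$, giving $\mathcal H^1(K,d_{\mathrm{log}})>0$, but that alone would not give infinite dimension; this is why I pass through the visual metric.

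For the rectifiable curves, let $\gamma:[a,b]\to(\partial X,d_{\mathrm{log}})$ be a nonconstant rectifiable curve. Its image $K=\gamma([a,b])$ is compact, connected and nondegenerate. Moreover, $\mathcal H^1(K,d_{\mathrm{log}})\le \operatorname{len}(\gamma)<\infty$; this is the standard fact that the length of a curve dominates the $1$-dimensional Hausdorff measure of its image, which one can also see by reparametrizing by arclength to get a $1$-Lipschitz map and applying Lemma~\ref{lem:measure-bound-modulus}. Hence $\dim_H(K,d_{\mathrm{log}})\le 1$, which contradicts the first part. Lipschitz maps from intervals are rectifiable, so every such map must be constant.

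The only point needing care is notational: the statement mixes $\partial_\kappa G$, $\partial X$ and $\partial G$. Under Remark~\ref{rem:identification_vis_sublinear} these are the same set carrying the same topology, so connectedness means the same thing whether measured with $d_{\mathrm{vis}}$ or $d_{\mathrm{log}}$, since both metrics induce the usual topology by Theorem~\ref{thm:log-metric-background}. The argument therefore holds for any proper geodesic hyperbolic $X$, in particular for Cayley graphs of $G$. I expect no real obstacle here; the main step is the observation that connectedness forces positive $\mathcal H^1$ through the distance-projection map.
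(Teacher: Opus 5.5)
Your proposal is correct and follows essentially the same route as the paper. The paper also projects $K$ by $\bfx\mapsto d_{\mathrm{vis}}(\bfa,\bfx)$ onto an interval to get $\dim_H(K,d_{\mathrm{vis}})\ge 1$, applies the theorem on subsets of positive visual dimension, and rules out nonconstant rectifiable curves by the same contradiction between $\mathcal H^1(E,d_{\mathrm{log}})\le \operatorname{length}(\gamma)<\infty$ and infinite dimension.
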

\begin{proof}
    Let $K\subseteq \bdy X$ be a non-degenerate connected subset. Then there exist two points $\bfa, \bfb \in K$ so that $d_{\mathrm{vis}}(\bfa, \bfb) >0$. Fix one of the points, say $\bfa \in K$, and define $f_\bfa: K\to \R$ as $f(\bfx) = d_{\mathrm{vis}}(\bfa, \bfx)$. This is $1$-Lipschitz map
    and $$[0, d_{\mathrm{vis}}(\bfa, \bfb)] \subseteq f(K).$$
    Thus, $f(K)$ has Hausdorff dimension at least $1$. Since Lipschitz maps cannot increase Hausdorff dimension,
    \[
        1\,\le \,\dim_H f(K) \,\le\,\dim_H (K, d_{\mathrm{vis}}).
    \]
    By Theorem~\ref{thm:subset-positive-visual-dim-infinite-log-dim}, $\dim_H(K, d_{\mathrm{log}}) = \infty$. 
    
    Now suppose that $ \gamma:[a,b]\longrightarrow(\partial X,d_{\mathrm{log}}) $  is a nonconstant rectifiable curve, and let $E=\gamma([a,b])$. Then $E$ is a nondegenerate connected subset of $\partial X$, so,  $ \dim_H(E,d_{\mathrm{log}})=\infty$.  On the other hand, rectifiability implies \[ \mathcal H^1(E,d_{\mathrm{log}}) \le  \operatorname{length}_{d_{\mathrm{log}}}(\gamma) < \infty, \] 
    where $\mathcal H^1(E,d_{\mathrm{log}})$ is the Hausdorff 1-measure, and therefore $\dim_H(E,d_{\mathrm{log}})\le 1$. This is a contradiction. Hence $(\partial X,d_{\mathrm{log}})$ contains no nonconstant rectifiable curves.
\end{proof}

When $G$ is a word-hyperbolic group, we call $G$ \emph{elementary} if it is
finite or virtually cyclic, and \emph{non-elementary} otherwise. Equivalently, if $G$ is finite then $\partial G=\emptyset$, if $G$ is infinite virtually cyclic then $\partial G$ has exactly two points, and if $G$ is non-elementary then $\partial G$ is an infinite perfect compact metrizable space; see, for example, \cite[Theorem~2.8]{survey:kapovichbenakli}. We use the convention
$\dim_H\emptyset=0$.

\begin{corollary}
\label{cor:group-hausdorff-assouad-nondoubling}
Let $G$ be a word-hyperbolic group.  If $G$ is elementary, then
$
\dim_H(\bdy G,d_{\mathrm{log}})=0$. If $G$ is non-elementary, then
\begin{enumerate}
    \item $\dim_H(\bdy G,d_{\mathrm{log}})=\infty$, $\dim_A(\bdy G,d_{\mathrm{log}})=\infty$,
    \item $(\bdy G,d_{\mathrm{log}})$ is not doubling, and
    \item $(\partial G,d_{\mathrm{log}})$ is nowhere locally
doubling.
\end{enumerate}

In particular, $(\bdy G,d_{\mathrm{log}})$ is not quasisymmetrically equivalent to
$(\partial G,d_{\mathrm{vis}})$ for any visual metric $d_{\mathrm{vis}}$ on a non-elementary group $G$.
\end{corollary}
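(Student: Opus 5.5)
The elementary case is immediate. If $G$ is finite, then $\partial G=\varnothing$ and $\dim_H=0$ by convention. If $G$ is infinite virtually cyclic, then $\partial G$ has two points, and a finite set has Hausdorff dimension $0$.

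For non-elementary $G$, I would reduce (1) and (2) to Corollary~\ref{thm:positive-visual-dim-implies-infinite-log-dim}. This requires showing $\dim_H(\partial G,d_{\mathrm{vis}})>0$. The cleanest route uses the fact that $\partial G$ is an infinite perfect compact metrizable space, hence contains a Cantor set, hence is uncountable. I do not want to rely on "uncountable implies positive dimension," since that is false. Instead I would argue directly: $\dim(\partial G)\ge 0$ is useless, so I use Coornaert's theorem quoted in the introduction. A visual metric on $\partial G$ is Ahlfors $D$-regular with $D=h/\varepsilon>0$, where $h>0$ is the growth exponent (positive since non-elementary groups have exponential growth). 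Hence $\dim_H(\partial G,d_{\mathrm{vis}})=D>0$.

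An alternative avoiding Coornaert is also available. Pick a free subgroup $F_2\le G$; it is quasiconvex (ping-pong/Schottky). Its limit set then contains a bi-Lipschitz copy of a self-similar Cantor set, with positive dimension in the visual metric. I would only mention this as a backup. Corollary~\ref{thm:positive-visual-dim-implies-infinite-log-dim} then gives $\dim_H=\dim_A=\infty$ and non-doubling.

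For (3), nowhere locally doubling, I would apply Theorem~\ref{thm:subset-positive-visual-dim-infinite-log-dim} to every open ball. Fix $\bfa\in\partial G$ and $r>0$, and let $E$ be the $d_{\mathrm{log}}$-ball $B(\bfa,r)$. It is open in the common topology, and Ahlfors regularity gives $\mathcal H^D(E,d_{\mathrm{vis}})>0$ for any nonempty open $E$, so $\dim_H(E,d_{\mathrm{vis}})=D>0$. Then $\dim_H(E,d_{\mathrm{log}})=\infty$, so $E$ has infinite Assouad dimension. Since a doubling space has every subset doubling with finite Assouad dimension, no neighborhood of any point is doubling. The step needing most care is exactly this positivity of visual Hausdorff dimension on every open set; Ahlfors regularity handles it uniformly. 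As a fallback, the $G$-action on $\partial G$ is minimal, so every open set contains a translate-image of a fixed positive-dimensional Cantor set, and translations are bi-Lipschitz for $d_{\mathrm{log}}$ by Theorem~\ref{thm:qi_implies_bl} together with Corollary~\ref{prop:change-basepoint-log}.

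Finally, for the quasisymmetry statement, $(\partial G,d_{\mathrm{vis}})$ is doubling by Coornaert. Quasisymmetric homeomorphisms preserve the doubling property (Heinonen, Theorem~10.18 of the lectures cited), so a quasisymmetric equivalence would force $(\partial G,d_{\mathrm{log}})$ to be doubling, contradicting (2).
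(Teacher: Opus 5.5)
Your proposal is correct and takes essentially the paper's route. Coornaert's theorem gives $\dim_H(\partial G,d_{\mathrm{vis}})>0$; Theorem~\ref{thm:subset-positive-visual-dim-infinite-log-dim} is then applied to $\partial G$ and to every nonempty open set, and the quasisymmetry claim follows because doubling is a quasisymmetric invariant. Your use of Ahlfors regularity to get positive visual Hausdorff dimension on every open set supplies a justification the paper only asserts, so your fallback arguments are unnecessary (as stated, the minimality one would really need north--south dynamics).
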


\begin{proof}
If $G$ is finite, then $\partial G=\emptyset$. If $G$ is infinite virtually
cyclic, then $\partial G$ has exactly two points. Hence, we have $\dim_H(\bdy G,d_{\mathrm{log}})=0$.

Now suppose that $G$ is non-elementary. Fix a visual metric $\dvis$ on $\partial G$ with parameter $a>1$. Let $e(G)$ denote the critical exponent of the action of $G$ on its Cayley graph, and let $e_a(G) = e(G)/\log{a}$. By \cite[Corollaries~5.5 and~7.6]{measure:Coornaert93}, $e_a(G) >0$ and $0< \mathcal H^{e_a(G)}_{d_a}(\partial G) < \infty$. It follows that $\dim_H(\partial G,d_{\mathrm{vis}})=e_a(G)>0$. Corollary~\ref{thm:positive-visual-dim-implies-infinite-log-dim}
therefore gives $\dim_H(\partial G,d_{\mathrm{log}})=\infty$.

Moreover, let $U$ be a nonempty open set equipped with log metric. It contains a nonempty visual ball $B(\bfa, r) \subseteq U$ since the log metric and the visual metric induce the same topology. Since $\dim_H (B(\bfa, r), d_{\mathrm{vis}}) >0 $, hence by Theorem~\ref{thm:positive-visual-dim-implies-infinite-log-dim}, $\dim_H(B(\bfa, r), d_{\mathrm{log}}) = \infty$. By monotonicity, $\dim_H(U, d_{\mathrm{log}}) = \infty$.

Since Hausdorff dimension is bounded above by Assouad dimension, we also obtain $\dim_A(\bdy G,d_{\mathrm{log}})=\infty$. A metric space is doubling if and only if it has finite Assouad dimension, hence $(\b G,d_{dy\mathrm{log}})$ is not doubling and $(U, d_{\mathrm{log}})$ is not doubling. Thus, no nonempty open subset of $(\bdy X, d_{\mathrm{log}})$ is doubling.

Finally, the boundary of a hyperbolic group equipped with any visual metric is doubling, and doubling is preserved under quasisymmetric homeomorphisms. Therefore, $(\bdy G,d_{\mathrm{log}})$ is not quasisymmetrically equivalent to
$(\partial G,d_{\mathrm{vis}})$ for any visual metric $d_{\mathrm{vis}}$. In particular, the logarithmic and visual metrics belong to distinct
conformal gauges.
\end{proof}

Thus far, we have observed that the small-scale properties are altered significantly under the change of metric from the visual metric to log metric. In the following subsections, we show that several large-scale properties of hyperbolic spaces are still detected by this change of metrics.

\subsection{Nagata dimension and asymptotic dimension}
\label{subsec:nagata-asdim-results}
We now prove that Nagata dimension behaves very differently from Hausdorff and
Assouad dimension. Although the logarithmic boundary has infinite Hausdorff and
Assouad dimension in the non-elementary case, its Nagata dimension is still coincides with the topological dimension of the boundary. 

\begin{proposition}
\label{prop:nagata-log-leq-visual}
Let $X$ be a proper geodesic hyperbolic space. Then
\[
\dim_N(\bdy X,d_{\mathrm{log}})
\le 
\dim_N(\bdy X,d_{\mathrm{vis}}).
\]
\end{proposition}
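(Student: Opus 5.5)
The plan is to apply the Lang--Schlichenmaier criterion, Lemma~\ref{lem:lang-schlichenmaier-homeo}, to the identity homeomorphism
\[
f=\id:(\bdy X,d_{\mathrm{log}})\longrightarrow(\bdy X,d_{\mathrm{vis}}).
\]
The two-sided control comes from Proposition~\ref{prop:log-to-visual-distortion}. With $t=d_{\mathrm{log}}(\bfa,\bfb)$ it gives
\[
a_1\exp(-a_2t^{-1/\theta})\le d_{\mathrm{vis}}(\bfa,\bfb)\le b_1\exp(-b_2t^{-1/\theta}).
\]
So the natural candidates are $\varphi_0(t)=a_1\exp(-a_2t^{-1/\theta})$ and $\psi_0(t)=b_1\exp(-b_2t^{-1/\theta})$, each extended by $0$ at $t=0$. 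Both are increasing and continuous. Their inverses are explicit; for instance
\[
\varphi_0^{-1}(u)=\Bigl(\frac{a_2}{\log(a_1/u)}\Bigr)^{\theta}\qquad(0<u<a_1).
\]

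The first technical step is that Lemma~\ref{lem:lang-schlichenmaier-homeo} needs genuine homeomorphisms of $[0,\infty)$, whereas $\varphi_0$ and $\psi_0$ are bounded. I would use that $\bdy X$ is compact, so $D:=\diam(\bdy X,d_{\mathrm{log}})<\infty$, and only the values $t\in[0,D]$ ever occur.
\begin{enumerate}
\item Define $\varphi$ on $[0,D]$ as $\varphi_0$, after first shrinking $a_1$ if necessary. Then extend $\varphi$ to $[D,\infty)$ by an increasing affine function of positive slope.
\item Define $\psi$ on $[0,D]$ as $\psi_0$ and extend it affinely in the same way. If $\psi_0$ fails to dominate somewhere, enlarge $b_1$.
\end{enumerate}
The hypothesis $\varphi(d_Z)\le d_W\le\psi(d_Z)$ then holds for all pairs, because it only involves distances at most $D$.

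The main step is the growth condition $\sup_{s>0}s^{-1}\varphi^{-1}(\bar c\,\psi(s))<\infty$ for each fixed $\bar c>0$. I would split into three ranges.
\begin{itemize}
\item \emph{Small $s$.} Here $\bar c\psi(s)$ is small and lies in the range where $\varphi=\varphi_0$. Direct computation gives
\[
\varphi^{-1}(\bar c\psi(s))=\Bigl(\frac{a_2}{\log\!\bigl(a_1/(\bar c b_1)\bigr)+b_2s^{-1/\theta}}\Bigr)^{\theta}.
\]
As $s\downarrow0$ the term $b_2s^{-1/\theta}$ dominates the constant $\log(a_1/(\bar cb_1))$, which may be negative. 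Hence the expression is asymptotic to $(a_2/b_2)^{\theta}s$, and the ratio with $s$ stays bounded.
\item \emph{Intermediate $s\in[s_0,D]$.} Here $\varphi^{-1}(\bar c\psi(s))$ is bounded because it is continuous on a compact interval, and $s\ge s_0$, so the ratio is bounded.
\item \emph{Large $s>D$.} Both $\psi$ and $\varphi^{-1}$ are eventually affine, so $\varphi^{-1}(\bar c\psi(s))$ grows at most linearly in $s$.
\end{itemize}
Lemma~\ref{lem:lang-schlichenmaier-homeo} then yields $\dim_N(\bdy X,d_{\mathrm{log}})\le\dim_N(\bdy X,d_{\mathrm{vis}})$.

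I expect the only delicate point to be the bookkeeping in the small-$s$ regime. One must check that $\bar c\psi(s)$ lies in the domain where $\varphi^{-1}=\varphi_0^{-1}$, and that the possibly negative constant $\log(a_1/(\bar cb_1))$ does not spoil the bound. Both are handled by taking $s_0$ small enough, depending on $\bar c$.
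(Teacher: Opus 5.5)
Your proposal is correct and follows essentially the same route as the paper: you apply Lemma~\ref{lem:lang-schlichenmaier-homeo} to the identity $(\bdy X,d_{\mathrm{log}})\to(\bdy X,d_{\mathrm{vis}})$, using the exponential bounds from Proposition~\ref{prop:log-to-visual-distortion} extended affinely beyond the finite $d_{\mathrm{log}}$-diameter, and you check the growth condition separately for small, intermediate, and large $s$, exactly as the paper does. The adjustments of $a_1$ and $b_1$ you mention are unnecessary but harmless.
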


\begin{proof}
We apply Lemma~\ref{lem:lang-schlichenmaier-homeo} to the identity map $\id:(\bdy X,d_{\mathrm{log}})
\longrightarrow
(\bdy X,d_{\mathrm{vis}})$. Let $q =1/\theta$. By Proposition~\ref{prop:log-to-visual-distortion}, there exist constants
$a_1,a_2,b_1,b_2>0$ such that, for all distinct $\bfa,\bfb$, if
$t=d_{\mathrm{log}}(\bfa,\bfb)$, then
\[
    a_1\exp(-a_2t^{-q})
    \le 
    d_{\mathrm{vis}}(\bfa,\bfb)
    \le 
    b_1\exp(-b_2t^{-q}).
\]

Since the boundary is compact, the log metric is bounded. Choose $T>0$ such that $d_{\mathrm{log}}(\bfa,\bfb)<T$
for all $\bfa,\bfb\in\bdy X$. Define increasing homeomorphisms $\varphi,\psi:[0,\infty)\to[0,\infty)$ as follows:
\begin{itemize}
    \item define $\varphi(0)=\psi(0)=0$,
    \item for $0<t\le  T$, define 
    $$\varphi(t)=a_1\exp(-a_2t^{-q}) \quad \text{ and }\quad \psi(t)=c_1\exp(-c_2t^{-q}),$$
    \item for $t\ge T$, extend linearly $$\varphi(t)=\varphi(T)+(t-T) \quad \text{ and } \quad \psi(t)=\psi(T)+(t-T).$$
\end{itemize}

Then $\varphi$ and $\psi$ are increasing homeomorphisms of
$[0,\infty)$ onto $[0,\infty)$. Moreover, for all
$\bfa,\bfb\in\pka X$, since all realized values of $d_{\mathrm{log}}(\bfa,\bfb)$ lie in $[0,T)$,
\[
\varphi(d^X_{\go,\theta}(\bfa,\bfb))
\le 
d_{\mathrm{vis}}(\bfa,\bfb)
\le 
\psi(d^X_{\go,\theta}(\bfa,\bfb)).
\]

It remains to verify the quantitative hypothesis in
Lemma~\ref{lem:lang-schlichenmaier-homeo}. Fix $\bar c>0$. We claim that
\[
\sup_{s>0}
\frac{1}{s}
\varphi^{-1}\bigl(\bar c\,\psi(s)\bigr)
<\infty.
\]

Since $\psi(s)\to0$ as $s\downarrow0$, we may choose $s_0>0$ sufficiently
small so that $s_0<T$ and, for every $0<s<s_0$, the number
$\bar c\,\psi(s)$ lies in the range of the first branch of $\varphi$. For
$0<s<s_0$, we compute
\[
\bar c\,\psi(s)
=
\bar c b_1\exp(-b_2s^{-q}), \,\,\,\,\text{and}
\]
\[
\varphi^{-1}(u)
=
\left(
\frac{a_2}{\log(a_1/u)}
\right)^{1/q}.
\]
Hence,
\[
\begin{aligned}
\varphi^{-1}\bigl(\bar c\,\psi(s)\bigr)
&=
\left(
\frac{a_2}
{\log\left(a_1/(\bar c b_1\exp(-b_2s^{-q}))\right)}
\right)^{1/q}  \\
&=
\left(
\frac{a_2}
{b_2s^{-q}+\log(a_1/(\bar c b_1))}
\right)^{1/q}.
\end{aligned}
\]
Since $b_2s^{-q}\to\infty$ as $s\downarrow0$, after decreasing $s_0$ if
necessary we may assume that
\[
b_2s^{-q}+\log(a_1/(\bar c b_1))
\ge 
\frac{b_2}{2}s^{-q}
\]
for every $0<s<s_0$. Thus,
\[
\varphi^{-1}\bigl(\bar c\,\psi(s)\bigr)
\le 
\left(\frac{2a_2}{b_2}\right)^{1/q}s.
\]
Therefore $\tfrac{1}{s}\varphi^{-1}\bigl(\bar c\,\psi(s)\bigr)$
is bounded for $0<s<s_0$.

On the compact interval $[s_0,T]$, the function
\[
s\mapsto
\frac{1}{s}\varphi^{-1}\bigl(\bar c\,\psi(s)\bigr)
\]
is continuous, and hence bounded.

Finally, for $s\ge  T$, the linear extension gives
\[
\varphi^{-1}(u)\le  T+u
\]
for all $u\ge 0$, while
\[
\psi(s)=\psi(T)+(s-T)\le  s+\psi(T).
\]
Therefore,
\[
\frac{1}{s}\varphi^{-1}\bigl(\bar c\,\psi(s)\bigr)
\le 
\frac{T}{s}
+
\bar c\frac{\psi(s)}{s}
\le 
1+\bar c+\frac{\bar c\,\psi(T)}{T}.
\]
This proves the desired supremum bound. All hypotheses of Lemma~\ref{lem:lang-schlichenmaier-homeo} are now satisfied,
so
\[
\dim_N(\bdy X,d_{\mathrm{log}})
\le 
\dim_N(\bdy X,d_{\mathrm{vis}}).
\]
\end{proof}

\begin{theorem}
\label{thm:nagata-log-boundary}
Let $G$ be a non-elementary hyperbolic group. Then
\[
\dim_N(\bdy G,d_{\mathrm{log}})
=
\dim_N(\bdy G,d_{\mathrm{vis}})
=
\dim(\bdy G).
\]
Moreover,
$
\asdim G
=
\dim_N(\bdy G,d_{\mathrm{log}})+1.
$
\end{theorem}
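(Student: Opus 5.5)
The plan is a sandwich: an upper bound from the log metric, a matching lower bound from topology, and then the Buyalo--Lebedeva theorem to close the chain.

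First, the upper bound. Proposition~\ref{prop:nagata-log-leq-visual} already gives
\[
\dim_N(\bdy G,d_{\mathrm{log}})\le \dim_N(\bdy G,d_{\mathrm{vis}}).
\]
Here $G$ acts geometrically on its Cayley graph, so that graph is a cobounded, proper, geodesic hyperbolic space. Theorem~\ref{thm:buyalo-lebedeva} then identifies the right-hand side with $\dim(\bdy G)$.

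Second, the lower bound. I will use the standard inequality $\dim Z\le \dim_N(Z,d)$, where $\dim Z$ is the covering dimension of the underlying topological space. By Theorem~\ref{thm:log-metric-background}, $d_{\mathrm{log}}$ induces the usual topology on $\bdy G$. Under the identification of Remark~\ref{rem:identification_vis_sublinear}, this is the same topology as the visual one. Hence
\[
\dim(\bdy G)\le \dim_N(\bdy G,d_{\mathrm{log}}).
\]
Combining the two bounds gives the chain of equalities
\[
\dim_N(\bdy G,d_{\mathrm{log}})=\dim_N(\bdy G,d_{\mathrm{vis}})=\dim(\bdy G).
\]

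Third, the asymptotic dimension statement follows immediately from $\asdim G=\dim(\bdy G)+1$ in Theorem~\ref{thm:buyalo-lebedeva}.

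I expect the only delicate point to be the lower bound $\dim\le\dim_N$. It holds for all metric spaces, but it needs the topology to be the metric one, which is exactly what the homeomorphism statement provides. I should also check that the elementary/non-elementary distinction is not needed here, since $\bdy G$ is nonempty and compact in the non-elementary case. One further point to verify is that the independence of $d_{\mathrm{log}}$ from the choices made (generating set, basepoint, geodesic representatives) does not matter. Nagata dimension is a bi-Lipschitz invariant, so by Theorem~\ref{thm:qi_implies_bl} and Corollary~\ref{cor:change-geodesic-section} the statement is well defined.
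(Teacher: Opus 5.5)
Your proposal is correct and follows the paper's proof essentially step for step. The upper bound comes from Proposition~\ref{prop:nagata-log-leq-visual} combined with Theorem~\ref{thm:buyalo-lebedeva}, the lower bound from $\dim Z\le\dim_N(Z,d)$ (using that $d_{\mathrm{log}}$ induces the usual boundary topology), and the asymptotic dimension formula is read off from Theorem~\ref{thm:buyalo-lebedeva}; your remarks on independence of auxiliary choices are harmless extras that the argument does not need.
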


\begin{proof}
Let $d_{\mathrm{vis}}$ be a visual metric on $\partial G$. By
Proposition~\ref{prop:nagata-log-leq-visual},
\[
\dim_N(\bdy G,d_{\mathrm{log}})
\le 
\dim_N(\partial G,d_{\mathrm{vis}}).
\]
By the Buyalo-Lebedeva theorem~\ref{thm:buyalo-lebedeva}, $
\dim_N(\partial G,d_{\mathrm{vis}})
=
\dim(\partial G)$.
Hence,
\[
\dim_N(\bdy G,d_{\mathrm{log}})
\le 
\dim(\partial G).
\]
On the other hand, topological dimension is bounded above by Nagata dimension:
\[
\dim(\bdy G)
\le 
\dim_N(\bdy G,d_{\mathrm{log}}).
\]
Thus, $\dim_N(\bdy G,d_{\mathrm{log}})
=
\dim(\bdy G)$. Finally, by Buyalo-Lebedeva's result
\[
\asdim G=\dim(\partial G)+1
\]
which implies
\[
\asdim G
=
\dim_N(\bdy G,d_{\mathrm{log}})+1.
\]
\end{proof}

\subsection{Linear connectedness and uniform perfectness}
\label{subsec:lc-up-results}

We finish by showing that some metric properties, such as uniform perfectness and linear connectedness, survive the logarithmic change of scale even when log metrics and visual metrics are not in conformal gauges of each other. 

Throughout this section, fix constants $\epsilon>0$ and $A\ge  1$ such that for all $\bfa,\bfb\in\partial X$,
\begin{equation}\label{eq:visual_estimates}    
    A^{-1}e^{-\epsilon(\bfa|\bfb)_{\go}}
    \le 
    d_{\mathrm{vis}}(\bfa,\bfb)
    \le 
    A e^{-\epsilon(\bfa|\bfb)_{\go}}.
\end{equation}

Also fix $B\ge  1$ such that
\begin{equation}\label{eq:log_estimates}
    B^{-1}\bigl(1+(\bfa|\bfb)_{\go}\bigr)^{-\theta}
    \le 
    d_{\mathrm{log}}(\bfa,\bfb)
    \le 
    B\bigl(1+(\bfa|\bfb)_{\go}\bigr)^{-\theta}.
\end{equation}

\begin{proposition}
\label{prop:visual-lc-implies-log-lc}
Let $X$ be a proper geodesic hyperbolic space. If
$(\partial X,d_{\mathrm{vis}})$ is linearly connected, then
$(\partial X,d_{\mathrm{log}})$ is linearly connected.
\end{proposition}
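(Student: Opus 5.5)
The plan is to transfer the connecting sets directly: the same connected set $S$ that works for $d_{\mathrm{vis}}$ will be shown to work for $d_{\mathrm{log}}$. The two metrics induce the same topology (Theorem~\ref{thm:log-metric-background} and Proposition~\ref{prop:visual-kappa-homeomorphism}), so a set connected for one metric is connected for the other. Everything therefore reduces to a diameter comparison. Fix distinct $\bfa,\bfb$ and let $L\ge1$ be the linear connectedness constant for $d_{\mathrm{vis}}$. Take a connected $S\ni\bfa,\bfb$ with $\diam_{\mathrm{vis}}S\le L\,d_{\mathrm{vis}}(\bfa,\bfb)$. We must bound $d_{\mathrm{log}}(\bfx,\bfy)$ for $\bfx,\bfy\in S$ by a uniform multiple of $d_{\mathrm{log}}(\bfa,\bfb)$.

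The key step converts the visual diameter bound into a Gromov product bound using \eqref{eq:visual_estimates}. Write $P=(\bfa\mid\bfb)_\go$ and $P'=(\bfx\mid\bfy)_\go$. For $\bfx,\bfy\in S$ we have
\[
A^{-1}e^{-\epsilon P'}\le d_{\mathrm{vis}}(\bfx,\bfy)\le L\,d_{\mathrm{vis}}(\bfa,\bfb)\le LAe^{-\epsilon P}.
\]
This gives $P'\ge P-\epsilon^{-1}\log(LA^2)=:P-D$, where $D$ is independent of the points. This is exactly the additive loss that is harmless after the logarithmic change of scale, since polynomial decay absorbs additive shifts in the Gromov product.

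Next we feed this into \eqref{eq:log_estimates}. We have $d_{\mathrm{log}}(\bfx,\bfy)\le B(1+P')^{-\theta}$ and $d_{\mathrm{log}}(\bfa,\bfb)\ge B^{-1}(1+P)^{-\theta}$, so it suffices to show $1+P\le C(1+P')$ with $C$ uniform. Split into cases:
\begin{itemize}
\item If $P\ge 2D$, then $P'\ge P/2$, and so $1+P\le 2(1+P')$.
\item If $P<2D$, then $1+P\le 1+2D\le(1+2D)(1+P')$.
\end{itemize}
With $C=1+2D$ this gives $d_{\mathrm{log}}(\bfx,\bfy)\le B^2C^\theta d_{\mathrm{log}}(\bfa,\bfb)$. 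Taking the supremum over $\bfx,\bfy\in S$ yields $\diam_{\mathrm{log}}S\le B^2C^\theta d_{\mathrm{log}}(\bfa,\bfb)$, which is linear connectedness with constant $B^2C^\theta$.

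The only point needing care is the small-$P$ regime, where the additive loss $D$ cannot be absorbed multiplicatively into $P$; the case split handles it. The boundary cases $\bfx=\bfy$ are trivial. No real obstacle is expected: the argument uses only the two comparison estimates and the fact that both metrics induce the same topology.
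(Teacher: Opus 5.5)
Your proposal is correct and follows essentially the same argument as the paper: reuse the visually controlled connected set, convert the diameter bound into $P'\ge P-\epsilon^{-1}\log(LA^2)$, and conclude from the two comparison estimates with constant $B^2C^\theta$. One small bookkeeping point: your first case only gives the factor $2$, so the combined constant should be $\max\{2,1+2D\}$ rather than $1+2D$ (the two differ when $D<1/2$). Alternatively, as the paper does, you can skip the case split entirely, since $P'\ge 0$ gives $1+P\le 1+P'+D\le (1+D)(1+P')$.
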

\begin{proof}
Let $L\ge  1$ be a linear connectedness constant for $(\partial X,d_{\mathrm{vis}})$. Let $\bfa,\bfb\in \pka X$. If $\bfa=\bfb$, there is nothing to prove. Assume
$\bfa\neq \bfb$. By linear connectedness of $(\bdy X,d_{\mathrm{vis}})$, there
exists a connected set $E\subset \bdy X$ containing $\bfa$ and $\bfb$ such that
\[
    \diam_{\mathrm{vis}}(E)
    \le 
    L d_{\mathrm{vis}}(\bfa,\bfb).
\]
Since the two metrics induce the same topology, $E$ is also connected with
respect to $d_{\mathrm{log}}$. Now let $\alpha,\beta\in E$. Then
\[
    d_{\mathrm{vis}}(\alpha,\beta)
    \le 
    \diam_{\mathrm{vis}}(E)
    \le 
    L d_{\mathrm{vis}}(\bfa,\bfb).
\]
Using the visual metric estimates in \eqref{eq:visual_estimates}, we get
\[
    A^{-1}e^{-\epsilon(\alpha|\beta)_{\go}}
    \le 
    d_{\mathrm{vis}}(\alpha,\beta)
    \le 
    L A e^{-\epsilon(\bfa|\bfb)_{\go}}.
\]
Taking logarithms gives
\[
    (\alpha|\beta)_{\go}
    \ge 
    (\bfa|\bfb)_{\go}
    -
    C, \quad \text{ where }C=\frac{1}{\epsilon}\log(LA^2).
\]
Equivalently,
\[
    \inf_{\alpha,\beta\in E}(\alpha|\beta)_{\go}
    \ge 
    (\bfa|\bfb)_{\go}-C.
\]

Since $(\alpha|\beta)_{\go}\ge  0$, the previous inequality implies
\[
    1+(\bfa|\bfb)_{\go}
    \le 
    (1+C)\bigl(1+(\alpha|\beta)_{\go}\bigr)
\]
for every $\alpha,\beta\in E$. 
Using the comparison with $d_{\mathrm{log}}$, we obtain
\[
\begin{aligned}
    d_{\mathrm{log}}(\alpha,\beta)
    &\le 
    B\bigl(1+(\alpha|\beta)_{\go}\bigr)^{-\theta}  \\
    &\le 
    B(1+C)^\theta
    \bigl(1+(\bfa|\bfb)_{\go}\bigr)^{-\theta}       \\
    &\le 
    B^2(1+C)^\theta d_{\mathrm{log}}(\bfa,\bfb).
\end{aligned}
\]
Taking the supremum over all $\alpha,\beta\in E$ gives
\[
    \diam_{\log}(E)
    \le 
    B^2(1+C)^\theta d_{\mathrm{log}}(\bfa,\bfb).
\]
Thus $E$ is a connected subset of $(\partial X,d_{\mathrm{log}})$ containing $\bfa,\bfb$
with
\[
    \diam_{\log}(E)
    \le 
    L' d_{\mathrm{log}}(\bfa,\bfb),  \quad \text{ where } L'=B^2(1+C)^\theta.
\]
Hence $(\partial X,d_{\mathrm{log}})$ is linearly connected.
\end{proof}
\begin{corollary}[Characterization of one-ended groups]
\label{cor:lc-hyperbolic-groups}
Let $G$ be a non-elementary hyperbolic group. Then the following are equivalent:

\begin{enumerate}
    \item $(\partial G,d_{\mathrm{vis}})$ is linearly connected
    \item $(\partial G,d_{\mathrm{log}})$ is linearly connected
    \item  $G$ is one-ended.
\end{enumerate}
\end{corollary}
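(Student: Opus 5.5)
I plan to prove the cycle $(1)\Rightarrow(2)\Rightarrow(1)$ and $(1)\Leftrightarrow(3)$, treating the equivalence of (1) and (3) as known from the visual theory.

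For $(1)\Rightarrow(2)$ I will apply Proposition~\ref{prop:visual-lc-implies-log-lc} directly. A hyperbolic group is cobounded, and its Cayley graph is a proper geodesic hyperbolic space.

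For $(1)\Leftrightarrow(3)$ I will cite the standard results. If $G$ is one-ended, then $\partial G$ is connected. By Bestvina--Mess and Swarup, $\partial G$ is then locally connected with no cut points. Bonk--Kleiner \cite{BonkKleiner:llc_one_ended} then upgrade this to linear connectedness of $(\partial G,d_{\mathrm{vis}})$. Conversely, a linearly connected space is connected, and a non-elementary hyperbolic group with connected boundary is one-ended. By Stallings' theorem, a non-elementary group with more than one end has a disconnected boundary.

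The remaining implication is $(2)\Rightarrow(3)$, and I will argue it through connectedness rather than by transferring linear connectedness back to the visual metric. If $(\partial G,d_{\mathrm{log}})$ is linearly connected, then any two points lie in a common connected set. Hence $\partial G$ is connected in the log topology. Since $d_{\mathrm{log}}$ induces the usual topology (Theorem~\ref{thm:log-metric-background} together with Proposition~\ref{prop:visual-kappa-homeomorphism}), $\partial G$ is topologically connected. For a non-elementary hyperbolic group, connectedness of the boundary is equivalent to one-endedness. Indeed, if $G$ had infinitely many ends, it would split over a finite subgroup by Stallings' theorem, and the boundary would be disconnected. Two-ended groups are elementary and are excluded by hypothesis.

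The main obstacle is purely bibliographic: pinning down the precise citation for "one-ended non-elementary hyperbolic group $\Rightarrow$ visual boundary linearly connected." I will rely on \cite{BonkKleiner:llc_one_ended}, which combines Bestvina--Mess, Bowditch and Swarup with the self-similarity of the visual boundary. No such upgrade is needed in the logarithmic direction, because Proposition~\ref{prop:visual-lc-implies-log-lc} already handles it.
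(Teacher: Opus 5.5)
Your proposal is correct and follows essentially the same route as the paper: (1)$\Rightarrow$(2) via Proposition~\ref{prop:visual-lc-implies-log-lc}, (2)$\Rightarrow$(3) by passing to topological connectedness of $\partial G$ (using that $d_{\mathrm{log}}$ induces the usual topology), and (3)$\Rightarrow$(1) via Bonk--Kleiner \cite{BonkKleiner:llc_one_ended}. Your separate argument for (1)$\Rightarrow$(3) is harmless but redundant, since it already follows from (1)$\Rightarrow$(2)$\Rightarrow$(3).
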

\begin{proof}
    If $(\partial G,d_{\mathrm{vis}})$ is linearly connected, then
Proposition~\ref{prop:visual-lc-implies-log-lc} implies that
$(\pka G,d^G_{e,\theta})$ is linearly connected. Conversely, suppose that $(\partial G, d_{\mathrm{log}})$ is linearly connected. Then $\partial G$ is connected. For a hyperbolic group, connectedness of the boundary is equivalent to being one-ended. By Bonk-Kleiner \cite[Proposition~4]{BonkKleiner:llc_one_ended}, the visual boundary of a one-ended hyperbolic group is linearly connected. Hence
$(\partial G,d_{\mathrm{vis}})$ is linearly connected.

\end{proof}
\begin{proposition}
\label{prop:visual-up-implies-log-up}
    Let $X$ be a proper geodesic hyperbolic space. If $(\vis X,d_{\mathrm{vis}})$ is uniformly perfect, then $(\pka X,d_{\mathrm{log}})$ is uniformly perfect. Consequently, if $G$ is non-elementary hyperbolic group, then $(\bdy G, d_{\mathrm{log}})$ is uniformly perfect. 
\end{proposition}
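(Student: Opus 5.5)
Fix $x\in\partial X$ and a radius $0<r<\diam_{\log}\partial X$. Throughout I use the Gromov-product comparisons \eqref{eq:visual_estimates} and \eqref{eq:log_estimates}. The plan is to convert the log scale $r$ into a Gromov-product level $P$, use visual uniform perfectness to find a point $y$ with $(x|y)_\go$ within a bounded additive error of $P$, and then convert back to the log scale.

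Let $\lambda_v$ be the visual uniform perfectness constant. Visual uniform perfectness gives, for each visual radius $s$ below the visual diameter, a point $y$ with $s/\lambda_v\le d_{\mathrm{vis}}(x,y)\le s$. By \eqref{eq:visual_estimates}, this yields a constant $D=D(\lambda_v,A,\epsilon)$ with the following property: for every level $P\ge P_0$ there is $y$ satisfying
\[
P\le (x|y)_\go\le P+D.
\]
To see this, choose $s=A^{-1}e^{-\epsilon P}$ and take logarithms. I also need $s$ to lie below $\diam_{\mathrm{vis}}$, which holds once $P\ge P_0$ for a threshold $P_0$ depending only on the constants. This additive control on the Gromov product is the key intermediate statement.

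Next, given $r$, define $P$ by $B(1+P)^{-\theta}=r$, so $1+P=(B/r)^{1/\theta}$. If $P\ge P_0$, pick $y$ as above. The upper bound in \eqref{eq:log_estimates} gives
\[
d_{\mathrm{log}}(x,y)\le B(1+P)^{-\theta}=r,
\]
and the lower bound gives
\[
d_{\mathrm{log}}(x,y)\ge B^{-1}(1+P+D)^{-\theta}\ge B^{-1}(1+D)^{-\theta}(1+P)^{-\theta}=r/(B^2(1+D)^\theta).
\]
So in this range we may take $\lambda=B^2(1+D)^\theta$.

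The main obstacle is the remaining range of large $r$, where $P<P_0$; there $r$ is comparable to the log diameter, and the difficulty is to find a point at log distance roughly $r$ rather than merely $\le r$. Here I use that $\partial X$ has at least two points and that $r<\diam_{\log}$. In this range $r\ge r_0:=B(1+P_0)^{-\theta}>0$. Choose $z$ with $d_{\mathrm{log}}(x,z)\ge \tfrac12\diam_{\log}$, which exists by the triangle inequality. If $d_{\mathrm{log}}(x,z)\le r$, take $y=z$; then $d_{\mathrm{log}}(x,y)\ge \tfrac12\diam_{\log}> r/2$. Otherwise $r<d_{\mathrm{log}}(x,z)\le\diam_{\log}$, and I apply the first case at the threshold level $P_0$. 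This produces $y$ with $d_{\mathrm{log}}(x,y)\le r_0\le r$ and
\[
d_{\mathrm{log}}(x,y)\ge r_0/\lambda\ge r\, r_0/(\lambda\,\diam_{\log}),
\]
again a bounded ratio. Taking the maximum of the constants finishes the first assertion. For the group statement, the boundary of a non-elementary hyperbolic group with a visual metric is Ahlfors regular by Coornaert, or one can quote the standard fact that it is uniformly perfect since it is a perfect compact space with cocompact action; hence $(\partial G,d_{\mathrm{log}})$ is uniformly perfect by the first part.
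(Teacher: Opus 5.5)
Your proof is correct and follows the paper's argument. You convert the log radius into a Gromov-product level $P$ via $B(1+P)^{-\theta}=r$, and use visual uniform perfectness at scale $A^{-1}e^{-\epsilon P}$ to trap $(x\mid y)_\go$ in $[P,P+D]$. For radii $r>r_0$ you reuse this estimate at the fixed level $P_0$ together with $r<\diam_{\log}$, as the paper does. The auxiliary point $z$ is unnecessary, since your ``otherwise'' branch never uses $r<d_{\mathrm{log}}(x,z)$ and already works for every $r\in(r_0,\diam_{\log})$. Your derivation of the group case from Coornaert's Ahlfors regularity, which gives visual uniform perfectness, supplies a step the paper's proof leaves implicit.
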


\begin{proof}
Let $\sf{c}_{\mathrm{vis}}\in(0,1)$ be a uniform perfectness constant for $(\partial X,d_{\mathrm{vis}})$. 
We prove that there exists $\sf{c}_{\mathrm{log}}\in(0,1)$ such that for every $\bfa \in \bdy X$ and for every $0<R<\diam_{\log}(\bdy X)$, there exists  $\bfb \in \bdy X$ so that
\[
    {\sf{c}}_{\mathrm{log}} R \,\le\,  d_{\mathrm{log}}(\bfa, \bfb) \,\le\, R.
\]
Let $0< R \le \diam_{\mathrm{log}} \bdy X$. Let $P = (B/R)^{1/\theta}-1$ and $s = A^{-1}e^{-\varepsilon P}$. Choose $P_0 \ge 0$ so that $$s_0 : = A^{-1}e^{-\varepsilon P_0} < \diam_{\mathrm{vis}}(\bdy X).$$
Let $R_0 = B(1+P_0)^{-\theta}$.

Case 1) If $0<R\le R_0$ then $P \ge P_0$ and $s \le s_0 < \diam_{\mathrm{vis}}(\bdy X)$ and hence, by unifrom perfectness of $(\bdy X, d_{\mathrm{vis}})$, there exists $\bfb \in \bdy X$ so that
\[
    {\sf{c}}_{\mathrm{vis}} s \,\le\,  d_{\mathrm{vis}}(\bfa, \bfb) \,\le\, s \,\implies\, {\sf{c}}_{\mathrm{vis}}A^{-1}e^{-\varepsilon P}   \,\le\,  d_{\mathrm{vis}}(\bfa, \bfb) \,\le\, A^{-1}e^{-\varepsilon P}.
\]
The upper bound and \eqref{eq:visual_estimates} imply that $(\bfa \mid \bfb)_{\go} \ge P$. Thus by \eqref{eq:log_estimates}, we obtain $$d_{\mathrm{log}}(\bfa, \bfb)\le B(1+P)^{-\theta} = R.$$
Similarly, from the lower bound, we obtain $(\bfa \mid \bfb)_{\go} \le P+C$ where $C = \frac{1}{\varepsilon} \log\Bigl(\frac{A^2}{{\sf c}_{\mathrm{vis}}}\Bigr)$. Thus
$P \le (\bfa\mid\bfb)_{\go} \le P+C$ and 
\[
\begin{aligned}
    d_{\mathrm{log}}(\bfa, \bfb)
    &\ge 
    B^{-1}(1+P+C)^{-\theta}  \\
    &=
    B^{-1}\left(\left(\frac BR\right)^{1/\theta}+C\right)^{-\theta} \\
    &=
    B^{-2}R
    \left(1+C\left(\frac RB\right)^{1/\theta}\right)^{-\theta}.
\end{aligned}
\]
Since $R<\operatorname{diam}_{\log}(\partial X)\le  B$, we have
$(R/B)^{1/\theta}\le  1$. Hence
\[
    d_{\mathrm{log}}(\bfa, \bfb)
    \ge 
    B^{-2}(1+C)^{-\theta}R \quad \text{ where } C = \frac{1}{\varepsilon} \log\left(\frac{A^2}{{\sf c}_{\mathrm{vis}}}\right).
\]
Case 2) Let $R_0<R<\operatorname{diam}_{\log}(\partial X)$. Applying the previous visual annulus argument for a fixed scale $s_0 = A^{-1 }e^{-\varepsilon P_0}$, we obtain that there exists $\bfb\in \bdy X$ so that 
$$P_0 \le (\bfa\mid\bfb)_{\go} \le P_0+C \quad \text{ where }C = \frac{1}{\varepsilon} \log\left(\frac{A^2}{{\sf c}_{\mathrm{vis}}}\right).$$ 
Therefore
\[
    d_{\mathrm{log}}(\bfa, \bfb)
    \le 
    B(1+P_0)^{-\theta}
    =
    R_0
    <
    R,
\]
and since $R<\operatorname{diam}_{\log}(\partial X)\le  B$, we get
\[
    d_{\mathrm{log}}(\bfa, \bfb)
    \ge 
    B^{-1}(1+P_0+C)^{-\theta} \ge  
    B^{-2}(1+P_0+C)^{-\theta}R.
\]
Choosing ${\sf c}_{\mathrm{log}} = \min\left\{
B^{-2}(1+C)^{-\theta}, B^{-2}(1+P_0+C)^{-\theta}\right\}$ proves the claim.
\end{proof}

\appendix
\section{Homeomorphism between $\pka X$ and $\vis X$} \label{app:homeo}
In this section, we prove that for a proper, geodesic, $\delta$-hyperbolic space $X$, $\pka X$ and $\vis X$ are homeomorphic.

Although this identification follows from
\cite[Proposition~7.3]{CashenMackay19Lmetrizable-contracting},
\cite[Theorem~1.1]{HeEquivalentTopologies}, and
\cite[Proposition~4.13]{QRT2},
we include a direct proof in order to make explicit the comparison
between the Gromov-product neighborhoods on $\partial_\infty X$
and the $\kappa$-Morse neighborhoods on $\partial_\kappa X$.

We assume that all quasi-geodesics in $X$ are based at $\go$.
Define $f:\vis X\longrightarrow \pka X$ as follows. If $\bfa\in\vis X$ and $\alpha$ is a geodesic ray based at $\go$ representing $\bfa$, let $f(\bfa):=[\alpha]_\kappa$.

The next Lemma says that geodesic representatives of distinct boundary points diverge linearly after their common initial fellow-traveling time.

\begin{lemma}
\label{lem:linear-divergence}
Let $\bfa,\bfb\in \vis X$ be distinct boundary points, and let
$\alpha_0,\beta_0:[0,\infty)\to X$ be geodesic representatives of
$\bfa,\bfb$, respectively, based at $\go$. Then for every $t\ge  0$,
\[
d\bigl(\alpha_0(t),\beta_0(t)\bigr)
\ge 
2t-2(\bfa\mid\bfb)_\go-4\delta.
\]
In particular,
\[
\liminf_{t\to\infty}
\frac{d\bigl(\alpha_0(t),\beta_0(t)\bigr)}{t}
\ge  2.
\]
\end{lemma}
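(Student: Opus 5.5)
The plan is to reduce the statement to the identity $d(\alpha_0(t),\beta_0(t)) = 2t - 2(\alpha_0(t)\mid\beta_0(t))_\go$. This holds because both rays are geodesics based at $\go$, so $\|\alpha_0(t)\|=\|\beta_0(t)\|=t$. The inequality is then equivalent to the upper bound
\[
(\alpha_0(t)\mid\beta_0(t))_\go \le (\bfa\mid\bfb)_\go + 2\delta
\]
for every $t\ge 0$. The proof will therefore be an upper estimate on a finite Gromov product in terms of the boundary Gromov product. This is the reverse direction to Lemma~\ref{lem:GP_basic}, which gave a lower bound.

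To prove the upper bound, fix $t$ and take $u,v\ge t$. Apply $\delta$-hyperbolicity twice in the form of the four-point condition, placing the pair $(\alpha_0(u),\beta_0(v))$ in the position of the quantity to be bounded below:
\[
(\alpha_0(u)\mid\beta_0(v))_\go \ge \min\{(\alpha_0(u)\mid\alpha_0(t))_\go,\ (\alpha_0(t)\mid\beta_0(t))_\go,\ (\beta_0(t)\mid\beta_0(v))_\go\} - 2\delta .
\]
As in the proofs of Lemmas~\ref{lem:GP_basic} and~\ref{lem:gp_basic_2}, the outer two terms equal $t$ because the rays are based geodesics. Since $(\alpha_0(t)\mid\beta_0(t))_\go \le t$ trivially, the minimum is exactly $(\alpha_0(t)\mid\beta_0(t))_\go$. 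This gives
\[
(\alpha_0(u)\mid\beta_0(v))_\go \ge (\alpha_0(t)\mid\beta_0(t))_\go - 2\delta \quad\text{for all } u,v\ge t .
\]
Taking $\liminf_{u,v\to\infty}$ and using the upper half of \eqref{eq:BH_upper_lower_bound_P}, namely that this liminf is at most $(\bfa\mid\bfb)_\go$, yields $(\alpha_0(t)\mid\beta_0(t))_\go \le (\bfa\mid\bfb)_\go + 2\delta$. Substituting into the distance identity gives $d(\alpha_0(t),\beta_0(t)) \ge 2t - 2(\bfa\mid\bfb)_\go - 4\delta$, which is exactly the claim.

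For the ``in particular'', note that $(\bfa\mid\bfb)_\go<\infty$ since $\bfa\neq\bfb$. Dividing by $t$ and letting $t\to\infty$ gives $\liminf_{t\to\infty} d(\alpha_0(t),\beta_0(t))/t \ge 2$. The only point needing care is the orientation of the hyperbolicity inequality: the unknown pair must appear as one entry of the minimum rather than on the left-hand side. Once that is arranged, the argument mirrors Lemma~\ref{lem:gp_basic_2}. Finiteness of the boundary product for distinct points is used implicitly and is standard.
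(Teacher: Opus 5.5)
Your proposal is correct and follows essentially the same route as the paper: the chain $\alpha_0(u),\alpha_0(t),\beta_0(t),\beta_0(v)$ with two applications of $\delta$-hyperbolicity, the liminf together with the upper half of \eqref{eq:BH_upper_lower_bound_P}, and the identity $d(\alpha_0(t),\beta_0(t))=2t-2(\alpha_0(t)\mid\beta_0(t))_\go$. The ``in particular'' part is handled the same way, using finiteness of $(\bfa\mid\bfb)_\go$ for distinct points.
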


\begin{proof}
Let $P=(\bfa\mid\bfb)_\go$.
Fix $t\ge  0$. We first show that
$$
(\alpha_0(t)\mid\beta_0(t))_\go\le  P+2\delta.
$$
Let $u,v\ge  t$. Applying $\delta$-hyperbolicity twice gives
\[
\begin{aligned}
(\alpha_0(u)\mid\beta_0(v))_\go
&\ge 
\min\{
(\alpha_0(u)\mid\alpha_0(t))_\go,
(\alpha_0(t)\mid\beta_0(t))_\go,
(\beta_0(t)\mid\beta_0(v))_\go
\}
-2\delta.
\end{aligned}
\]
Since $\alpha_0$ and $\beta_0$ are geodesic rays based at $\go$,
\[
(\alpha_0(u)\mid\alpha_0(t))_\go=t,
\qquad
(\beta_0(t)\mid\beta_0(v))_\go=t,
\qquad
(\alpha_0(t)\mid\beta_0(t))_\go\le  t.
\]
Therefore,
\[
(\alpha_0(u)\mid\beta_0(v))_\go
\ge 
(\alpha_0(t)\mid\beta_0(t))_\go-2\delta.
\]
Taking the liminf as $u,v\to\infty$ and by \eqref{eq:BH_upper_lower_bound_P}, we obtain
\[
P \,\,\ge \,\, \liminf_{u,v\to\infty}
(\alpha_0(u)\mid\beta_0(v))_\go
\,\,\ge \,\,
(\alpha_0(t)\mid\beta_0(t))_\go-2\delta.
\]
Now,
\[
d\bigl(\alpha_0(t),\beta_0(t)\bigr)
=
2t-2(\alpha_0(t)\mid\beta_0(t))_\go.
\]
Using the estimate above, we obtain
\[
d\bigl(\alpha_0(t),\beta_0(t)\bigr)
\ge 
2t-2P-4\delta
=
2t-2(\bfa\mid\bfb)_\go-4\delta.
\]
Dividing by $t$ and letting $t\to\infty$ gives
\[
\liminf_{t\to\infty}
\frac{d\bigl(\alpha_0(t),\beta_0(t)\bigr)}{t}
\ge  2.
\]
\end{proof}

\begin{proposition}\label{prop:bijectivity-visual-kappa_appendix} 
$f$ is well-defined and bijective. 
\end{proposition}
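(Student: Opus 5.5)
We need to show three things: that $f$ is well defined, that it is injective, and that it is surjective.

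\emph{Well-definedness.} Let $\alpha,\alpha'$ be two geodesic rays based at $\go$ that represent the same $\bfa\in\vis X$. Their Hausdorff distance is finite, and by the standard thin-triangle argument two such asymptotic rays with a common basepoint satisfy $d(\alpha(t),\alpha'(t))\le 8\delta$ for all $t$ (one could equally cite the Morse lemma, Lemma~\ref{Lem:uniform-kappa-morse}, to get some uniform bound). Then $d(\alpha_r,\alpha'_r)/r\to0$, so $\alpha\sim\alpha'$ in the sense of Definition~\ref{Def:kappa-boundary}. We also need $\alpha$ to lie in $\mathcal M^\kappa_\go(X)$; this is exactly Lemma~\ref{Lem:uniform-kappa-morse}, which says every geodesic ray is $\kappa$-Morse. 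Hence $[\alpha]_\kappa$ does not depend on the chosen geodesic representative, and $f$ is well defined.

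\emph{Injectivity.} Suppose $\bfa\neq\bfb$ in $\vis X$, with geodesic representatives $\alpha_0,\beta_0$. Then $(\bfa\mid\bfb)_\go<\infty$, and Lemma~\ref{lem:linear-divergence} gives $\liminf_t d(\alpha_0(t),\beta_0(t))/t\ge2>0$. For geodesic rays based at $\go$ we have $t_r=r$, so $\alpha_{0,r}=\alpha_0(r)$ and $\beta_{0,r}=\beta_0(r)$. Thus the limit in Definition~\ref{Def:kappa-boundary} is not $0$, the rays do not $\kappa$-fellow travel, and $f(\bfa)\neq f(\bfb)$.

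\emph{Surjectivity.} Take a class $\bfb\in\pka X$. Lemma~\ref{lem:qrt242} provides a geodesic ray $\beta_0\in\bfb$ based at $\go$. Let $\bfb'\in\vis X$ be its visual class; then $f(\bfb')=[\beta_0]_\kappa=\bfb$. Alternatively, for an arbitrary quasi-geodesic ray in $\bfb$, the Morse lemma gives a geodesic ray at finite Hausdorff distance, and the well-definedness argument shows it lies in the same $\kappa$-class.

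The only step that needs any care is identifying $\alpha_r$ with $\alpha(r)$ for geodesic rays. This holds because $\|\alpha(t)\|=t$, so $t_r=r$. Once that is in place, the linear divergence from Lemma~\ref{lem:linear-divergence} directly rules out sublinear fellow travelling, and injectivity follows.
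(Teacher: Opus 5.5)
Your proposal is correct and follows essentially the same route as the paper: well-definedness via a uniform bound on $d(\alpha(t),\alpha'(t))$, injectivity via Lemma~\ref{lem:linear-divergence}, and surjectivity via Lemma~\ref{lem:qrt242}. The differences are cosmetic. The paper gets the bound $2D$ directly from the Hausdorff distance $D$ (using $|t-s|\le D$) rather than quoting an $8\delta$ estimate, and you make explicit two points the paper leaves tacit: geodesic rays are $\kappa$-Morse, and $t_r=r$.
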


\begin{proof}
    We first check that the map is well-defined. 
    Let $\bfa\in\partial_\infty X$, and let $\alpha$ and $\alpha'$ be two geodesic rays based at $\go$ representing $\bfa$. Their images have finite Hausdorff distance. Hence, there exists $D\ge 0$ such that, for every $t\ge 0$, one can find $s\ge 0$ satisfying
    \[
    d\bigl(\alpha(t),\alpha'(s)\bigr)\le  D.
    \]
    Since both rays are based at $\go$ and parametrized by arclength, $|t-s|\le  D$. Therefore,
    \[
        d\bigl(\alpha(t),\alpha'(t)\bigr)
        \le 
        d\bigl(\alpha(t),\alpha'(s)\bigr)+|s-t|
        \le 2D,
    \]
    and consequently,
    \[
        \frac{d(\alpha(t),\alpha'(t))}{t}
        \le \frac{2D}{t}\longrightarrow0.
    \]
    Thus, $\alpha$ and $\alpha'$ sublinearly track each other, so
    $[\alpha]_\kappa=[\alpha']_\kappa$. Hence $f(\bfa)$ is independent of
    the choice of geodesic representative.

    We now prove that $f$ is injective. Let $\bfa,\bfb\in \partial_\infty X$ and
    suppose that $ f(\bfa)=f(\bfb)$. Let $\alpha$ and $\beta$ be geodesic rays based at 
    $\go$ representing $\bfa$ and $\bfb$, respectively. By the definition of $f$, we have
    $f(\bfa)=[\alpha]_\kappa$ and $f(\bfb)=[\beta]_\kappa$. Hence 
    $[\alpha]_\kappa=[\beta]_\kappa$.
    By the definition of the equivalence relation on $\partial_\kappa X$, this
    means that $\alpha$ and $\beta$ sublinearly track each other, i.e.
    \[
            \lim_{t\to\infty}
            \frac{d_X(\alpha(t),\beta(t))}{t}=0.
    \]

    We claim that $\bfa=\bfb$. Suppose, for contradiction, that $\bfa\ne\bfb$. Then their Gromov product is finite:
    $P:=(\bfa|\bfb)_o<\infty$
    By Lemma~\ref{lem:linear-divergence}, for every $t\ge 0$ we have $d_X(\alpha(t),\beta(t))
            \ge
            2t-2P-4\delta$. Therefore,
    \[
            \liminf_{t\to\infty}
            \frac{d_X(\alpha(t),\beta(t))}{t}
            \ge 2
    \]
    contradicting the sublinear tracking relation. Hence $\bfa = \bfb$. So $f$ is injective.

We next prove that $f$ is surjective. Let $\bfx\in \pka X$. By
Lemma~\ref{lem:qrt242}, there exists a geodesic ray
$\alpha:[0,\infty)\to X$ based at $\go$ such that $\bfx=[\alpha]_\kappa$. Let $\bfa=[\alpha]_\infty\in\partial_\infty X$ be the visual boundary point
represented by $\alpha$. Then, by definition of $f$, $f(\bfa)=[\alpha]_\kappa=\bfx$. Hence $f$ is surjective.
\end{proof}

\begin{proposition}\label{prop:visual-kappa-homeomorphism_appendix} 
    $f$ is a homeomorphism.  
\end{proposition}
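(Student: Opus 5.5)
Since $f$ is already a bijection (Proposition~\ref{prop:bijectivity-visual-kappa_appendix}), I will show that $f$ and $f^{-1}$ both map neighborhood bases to neighborhoods. On $\vis X$ I use the Gromov-product neighborhoods
\[
V(\bfa,R):=\{\bfb\in\vis X:(\bfa\mid\bfb)_\go>R\},\qquad R>0,
\]
which form a neighborhood basis at $\bfa$ for the visual topology; see \cite[III.H.3]{book:BridsonHaefliger}. On $\pka X$ I use the sets $\calU_\kappa(\bfa,\rr)$. The claim then reduces to two containments: for every $\rr$ there is an $R$ with $f(V(\bfa,R))\subseteq \calU_\kappa(f(\bfa),\rr)$, and for every $R$ there is an $\rr$ with $f^{-1}(\calU_\kappa(f(\bfa),\rr))\subseteq V(\bfa,R)$.

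\textbf{Continuity of $f$.} I take $R=\rr$. If $(\bfa\mid\bfb)_\go\ge \rr$, then the lower-bound half of the proof of Proposition~\ref{prop:relationship_p_r} applies with $S=\rr\le P$. That argument combines Lemma~\ref{lem:GP_basic} with the Morse lemma and the choice $2H+10\delta\le\sM(\qq,\sQ)$. It shows that every $(\qq,\sQ)$-quasi-geodesic $\beta\in\bfb$ with small gauge satisfies $\beta|_\rr\subseteq\calN_\kappa(\alpha_0,\sM(\qq,\sQ))$. Hence $\bfb\in\calU_\kappa(\bfa,\rr)$. This argument never uses the topology on $\pka X$, only the definitions, so there is no circularity. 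Note that this direction relies on the uniform gauge $\sM$ and on the fact that the neighborhood condition constrains only rays whose gauge is small compared to $\rr$.

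\textbf{Continuity of $f^{-1}$.} Given $R$, I choose $\rr\ge S_0$ so large that $\sM(1,0)<\rr/(24\kappa(\rr))$ and $\rr-\sM(1,0)\kappa(\rr)-2\delta>R$; this is possible by sublinearity. Now suppose $\bfb\in\calU_\kappa(\bfa,\rr)$. The geodesic ray $\beta_0$ is a $(1,0)$-quasi-geodesic whose gauge is small compared to $\rr$. The definition therefore gives $d(\beta_0(\rr),\alpha_0)\le \sM(1,0)\kappa(\rr)$. Lemma~\ref{lem:gp_basic_2} then yields $(\bfa\mid\bfb)_\go\ge \rr-\sM(1,0)\kappa(\rr)-2\delta>R$, so $\bfb\in V(\bfa,R)$. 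This repeats the upper-bound half of Proposition~\ref{prop:relationship_p_r}.

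\textbf{Main obstacle.} The expected difficulty is bookkeeping rather than geometry. The sets $\calU_\kappa(\bfa,\rr)$ generate the topology but need not be open, so I must check that they form a neighborhood basis at $\bfa$. Equivalently, it suffices to verify the two containments above at every point $\bfa$, which is exactly what the argument does. I would cite \cite[\S4]{QRT2}, which states that the $\calU_\kappa(\bfa,\rr)$ form a neighborhood basis at $\bfa$. I would also remark that replacing the cutoff $2$ by $24$ changes only the constants $S_0$ in the argument and does not affect this fact.
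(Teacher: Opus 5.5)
Your proposal is correct and follows the paper's proof essentially step for step. The paper also proves $f(\mathcal V(\bfa,R))\subseteq\calU_\kappa(\alpha_0,R)$ using the Morse lemma and Lemma~\ref{lem:GP_basic}, obtains the reverse containment by choosing $R$ with $\sM(1,0)<R/(24\kappa(R))$ and $R-\sM(1,0)\kappa(R)-2\delta>T$ and then applying Lemma~\ref{lem:gp_basic_2}, and cites \cite{QRT2} for the neighborhood-basis property of the sets $\calU_\kappa$. The case $\bfb=\bfa$ should be handled separately, since Lemma~\ref{lem:GP_basic} is stated only for distinct points; that case follows immediately from the Morse lemma together with $H\le\sM(\qq,\sQ)$.
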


\begin{proof}
The fact that $f$ is well-defined and bijective follows from
Proposition~\ref{prop:bijectivity-visual-kappa_appendix}. It remains to
compare the two boundary topologies.

For $\bfa\in\vis X$ and $T>0$, let
\[
\mathcal V(\bfa,T)
:=
\bigl\{\bfb\in\vis X:(\bfa\mid\bfb)_{\go}>T\bigr\}.
\]
 The sets
$\mathcal V(\bfa,T)$ form the standard neighborhood basis at $\bfa$ in
the visual boundary. Fix a geodesic representative
$\alpha_0$ of $\bfa$. The sets
\[
\calU_\kappa(\alpha_0,R),
\qquad R>0,
\]
form a neighborhood basis at $f(\bfa)$ in $\pka X$ as in Proposition~4.7 \cite{QRT2}. We show that these
two neighborhood bases generate the same topology under $f$.

First, we claim that for every $R>0$,
\begin{equation}\label{eq:visual-neighborhood-into-kappa}
f\bigl(\mathcal V(\bfa,R)\bigr)
\subseteq
\calU_\kappa(\alpha_0,R).
\end{equation}
Let $\bfb\in\mathcal V(\bfa,R)$, and denote
$
P=(\bfa\mid\bfb)_{\go}>R
$. The case $\bfb=\bfa$ is immediate, so suppose that
$\bfb\neq\bfa$. Let $\beta_0$ be a geodesic representative of
$\bfb$, and let $\beta$ be any $(\qq,\sQ)$-quasi-geodesic
representative of $f(\bfb)$ satisfying
\[
\sM(\qq,\sQ)<\frac{R}{24\kappa(R)}.
\]
We must prove that $
\beta|_R
\subseteq
\calN_\kappa\bigl(\alpha_0,\sM(\qq,\sQ)\bigr)$.

Indeed, let $z\in\beta|_R$. Then $\|z\|\le  R$. Since $\beta$ and
$\beta_0$ represent the same boundary point, stability of
quasi-geodesics provides $t\ge 0$ such that
\[
d\bigl(z,\beta_0(t)\bigr)\le  H(\delta,\qq,\sQ)
\]
as shown in Figure~\ref{fig:prop33}. For brevity, we write $H=H(\delta,\qq,\sQ)$. Since $\beta_0$ is a
geodesic ray based at $\go$,
\[
t
\le 
\|z\|+H
\le 
R+H
<
P+H.
\]
Let $s=\min\{t,P\}$, then utilizing Lemma~\ref{lem:GP_basic} and the calculations in Proposition~\ref{prop:relationship_p_r}, we obtain
\[
\begin{aligned}
d\bigl(\beta_0(t),\alpha_0(s)\bigr)
&\le 
t+s+10\delta-2\min\{t,s,P\} \\
&\le 
H+10\delta.
\end{aligned}
\]
Therefore,
\[
\begin{aligned}
d(z,\alpha_0)
&\le 
d\bigl(z,\beta_0(t)\bigr)
+
d\bigl(\beta_0(t),\alpha_0(s)\bigr)\\
&\le 
\sM(\qq,\sQ) \, \le 
\sM(\qq,\sQ)\kappa(\|z\|),
\end{aligned}
\]
where the second inequality follows from the choice of the common strong Morse gauge and the last uses $\kappa\ge 1$. Thus, $z\in \calN_\kappa\bigl(\alpha_0,\sM(\qq,\sQ)\bigr)$.
Since $z$ and $\beta$ were arbitrary, this proves
\eqref{eq:visual-neighborhood-into-kappa}.

Conversely, fix $T>0$. Let $M_0:=\sM(1,0)$. Since $\kappa$ is sublinear, we may choose $R>0$ sufficiently large
that
\begin{equation}\label{eq:choice-R-homeomorphism}
M_0<\frac{R}{24\kappa(R)}
\qquad\text{and}\qquad
R-M_0\kappa(R)-2\delta>T.
\end{equation}
We claim that $\calU_\kappa(\alpha_0,R)
\subseteq
f\bigl(\mathcal V(\bfa,T)\bigr)$.

Let $\bfy\in\calU_\kappa(\alpha_0,R)$. Since $f$ is bijective, we can write
$\bfy=f(\bfb)$ for a unique $\bfb\in\vis X$. Again, the case
$\bfb=\bfa$ is immediate. Otherwise, let $\beta_0$ be a geodesic
representative of $\bfb$. Since $\beta_0$ is a $(1,0)$-quasi-geodesic,
the first inequality in \eqref{eq:choice-R-homeomorphism} and the
definition of $\calU_\kappa(\alpha_0,R)$ give $\beta_0|_R
\subseteq
\calN_\kappa(\alpha_0,M_0)$.
In particular,
\[
d\bigl(\beta_0(R),\alpha_0\bigr)
\le 
M_0\kappa(R).
\]
Lemma~\ref{lem:gp_basic_2} now yields
\[
(\bfa\mid\bfb)_{\go}
\ge 
R-M_0\kappa(R)-2\delta
>
T.
\]
Therefore $\bfb\in\mathcal V(\bfa,T)$, proving
$\calU_\kappa(\alpha_0,R)
\subseteq
f\bigl(\mathcal V(\bfa,T)\bigr)$.

The containment
\eqref{eq:visual-neighborhood-into-kappa} shows that $f$ is continuous,
while $\calU_\kappa(\alpha_0,R)
\subseteq
f\bigl(\mathcal V(\bfa,T)\bigr)$ shows that $f^{-1}$ is
continuous. Hence $f$ is a homeomorphism.
\end{proof}

\bibliographystyle{alpha}
\bibliography{references}
\end{document}